\newtheorem{thm}{Theorem}[section]
\newtheorem{cor}[thm]{Corollary}
\newtheorem{lem}[thm]{Lemma}
\newtheorem{prop}[thm]{Proposition}
\theoremstyle{definition}
\newtheorem{rem}[thm]{Remark}
\def\ZZ{\mathbb{Z}}
\def\NN{\mathbb{N}}
\def\CC{\mathbb{C}}
\newcommand{\gra}{{\alpha}} \newcommand{\grb}{{\beta}} \newcommand{\grg}{{\gamma}} 
 \newcommand{\grz}{{\zeta}}  \newcommand{\gru}{{\theta}}
  \newcommand{\grl}{{\lambda}} \newcommand{\grm}{{\mu}}
   \newcommand{\grp}{{\pi}}
 \newcommand{\grs}{{\sigma}}  
\newcommand{\grf}{{\phi}} \newcommand{\grx}{{\chi}} \newcommand{\grc}{{\psi}} \newcommand{\grv}{{\omega}}
\newcommand{\grF}{{\Phi}}   
\newcommand{\arw}{\rightarrow} %(Example: f: \RR \arw \RR)
\title{Universal deformations of the finite quotients of the braid group on 3 strands}
\author{Eirini Chavli}
\begin{document}
\maketitle

\noindent
{\bf Abstract.} We prove that  the quotients of the group algebra of the braid group on 3 strands by a generic  quartic and quintic relation respectively, have finite rank. This is a special case of a conjecture by Brou\'{e}, Malle and Rouquier for the generic Hecke algebra of an arbitrary complex reflection group. Exploring the consequences of this case, we prove that we can determine completely the irreducible representations of this braid group of dimension at most 5, thus recovering a classification of Tuba and Wenzl in a more general framework.\\ \\
\textbf{Acknowledgments.} I would like to thank my supervisor Ivan Marin for his help and support during this research, and Maria Chlouveraki, for fruitful discussions on the last section of this paper.  I would also like to thank the University Paris Diderot - Paris 7 for its financial support.\\ \\
\textbf{MSC 2010:} 20F36, 20C08.\\  
\textbf{Keywords:} Braid groups, Representations, Cyclotomic Hecke algebras.

\section{Introduction}
\indent

In 1999 I. Tuba and H. Wenzl classified the irreducible representations of the braid group $B_3$ of dimension $k$ at most 5 over an algebraically closed field $K$ of any characteristic (see \cite{T}) and, therefore, of $PSL_2(\ZZ)$, since the quotient group $B_3$ modulo its center is isomorphic to $PSL_2(\ZZ)$. Recalling that $B_3$ is given by generators $s_1$ and $s_2$ that satisfy the relation $s_1s_2s_1=s_2s_1s_2$, we assume that $s_1\mapsto A, s_2\mapsto B$ is an irreducible representation of $B_3$, where $A$ and $B$ are invertible $k\times k$ matrices over $K$ satisfying $ABA=BAB$. I. Tuba and H. Wenzl proved that $A$ and $B$ can be chosen to be in \emph{ordered triangular form}\footnote{Two $k\times k$ matrices are in ordered triangular form if one of them is an upper triangular matrix with eigenvalue $\grl_i$ as $i$-th diagonal entry, and the other is a lower triangular matrix with eigenvalue $\grl_{k+1-i}$
as $i$ -th diagonal entry.} with coefficients completely determined by the eigenvalues (for $k\leq3$) or by the eigenvalues and by the choice of a $k$th root of det$A$ (for $k>3$). Moreover, they proved that such irreducible representations exist if and only if the eigenvalues do not annihilate some polynomials $P_k$ in the eigenvalues and the choice of the $k$th root of det$A$, which they determined explicitly. 

At this point, a number of questions arise: what is the reason we do not expect their methods to work for any dimension beyond 5 (see \cite{T}, remark 2.11, 3)? Why are the matrices in this neat form? In \cite{T}, remark 2.11, 4 there is an explanation for the nature of the polynomials $P_k$. However, there is no argument connected with the nature of $P_k$ that explains the reason why these polynomials provide  a  necessary condition for a representation of this form to be irreducible. In this paper we answer these questions by recovering this classification of the irreducible representations of the braid group $B_3$ as a consequence of the freeness conjecture for the generic Hecke algebra of the finite quotients of the braid group $B_3$, defined by the additional relation $s_i^k=1$, for $i=1,2$ and $2\leq k\leq 5$. For this purpose, we first prove this conjecture for $k=4,5$ (the rest of the cases are known by previous work). The fact that there is a connexion between the classification of the irreducible representations of $B_3$ of dimension at most 5 and its finite quotients has already been suspected by I. Tuba and H. Wenzl (see \cite{T}, remark 2.11, 5).

More precisely, there is a Coxeter's classification of the finite quotients of the braid group $B_n$ on $n$ strands  by the additional relation $s_i^k=1$, for $i=1,2$ (see \cite{Coxeter}); these quotients are finite if and only if  $\frac{1}{k}+\frac{1}{n}>\frac{1}{2}$. If we exclude the obvious cases $n= 2$ and $k=2$, which lead to the
cyclic groups and to the symmetric groups respectively, there is only a finite number of such groups, which
are irreducible complex reflection groups: these are the groups $G_4, G_8$ and $G_{16}$, for $n=3$ and $k=3,4,5$ and the groups  $G_{25}, G_{32}$ for $n=4,5$ and $k=3$, as they are known in the Shephard-Todd classification (see \cite{S}). Therefore, if we restrict ourselves to the case of $B_3$, we have the finite quotients $W_k$, for $2\leq k\leq 5$, which are the groups $\mathfrak{S}_3, G_4, G_8$ and $G_{16}$, respectively.

We set $R_k=\ZZ[a_{k-1},...,a_1, a_0, a_0^{-1}]$, for $k=2,3,4,5$ and we denote by $H_k$  the \emph{generic Hecke algebra} of $W_k$; that is the quotient of the group algebra $R_kB_3$ by the relations $s_i^k=a_{k-1}s_i^{k-1}+...+a_1s_i+a_0$, for $i=1,2$. We assume we have an irreducible representation of $B_3$ of dimension $k$ at most 5. By the Cayley-Hamilton theorem of linear algebra, the image of a generator under such a representation is annihilated by a monic polynomial $m(X)$ of degree $k$, therefore this representation has to factorize through the corresponding Hecke algebra  $H_k$. As a result, if $\gru: R_k\rightarrow K$ is a specialization of $H_k$ such that $a_i\mapsto m_i$, where $m_i$ are the coefficients of $m(X)$, the irreducible representations of $B_3$ of dimension $k$ are exactly the irreducible representations of $H_k\otimes_{\gru}K$ of dimension $k$. A conjecture of  Brou\'{e}, Malle and Rouquier states that $H_k$ is free as $R_k$-module of rank $|W_k|$. Based on this assumption, the irreducible representations of $H_k$ have been determined in \cite{Mallem}. We will show how to use the decomposition map $d_{\gru}$ (see \cite{Geck} \textsection 7.3), in order to get the irreducible representations of $H_k\otimes_{\gru}K$ that we are interested in.

The general freeness conjecture of Brou\'{e}, Malle and Rouquier states that the generic Hecke algebra of a complex reflection group is a free $R$-module of finite rank, where $R$ is the ring of definition of the Hecke algebra (see \cite{BMR}). For the finite quotients $W_k$ of the braid group we mentioned before, this conjecture is known to be true for the symmetric group (see \cite{Geck}, Lemma 4.4.3), and it was proved in \cite{F}, \cite{BM} and \cite{Ivan} for the case of $G_4$ and in \cite{Ivan} for the cases of $G_{25}$ and $G_{32}$. We will  prove the validity of the conjecture  for the rest of the cases, which belong to the class of complex reflection groups of rank two\footnote{The study of the conjecture of these groups is the subject of the author's PhD thesis, that is still in progress.}; the main theorem of this paper is the following:
\begin{thm}$H_k$ is a free $R_k$-module of rank $|W_k|$.
\end{thm}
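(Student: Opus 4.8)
The plan is to prove the statement for the two remaining cases $k=4$ (the group $G_8$) and $k=5$ (the group $G_{16}$) by explicitly exhibiting a spanning set of the expected size $|W_k|$ and then arguing that it is in fact a basis. The key structural fact I would exploit is that $B_3$ is generated by $s_1,s_2$ with the single braid relation $s_1s_2s_1=s_2s_1s_2$, so that the element $z=(s_1s_2)^3=(s_1s_2s_1)^2$ is central and generates the center of $B_3$; moreover $B_3/\langle z\rangle\cong PSL_2(\mathbb{Z})\cong \mathbb{Z}/2 * \mathbb{Z}/3$. Concretely, every element of $B_3$ can be written using $s_1$, $s_2$, and the element $u=s_1s_2s_1$, and the quartic/quintic relation $s_i^k=a_{k-1}s_i^{k-1}+\cdots+a_0$ lets one reduce any power of $s_1$ or $s_2$ to a linear combination of $\{1,s_i,\dots,s_i^{k-1}\}$.

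The first step is therefore bookkeeping: set up a candidate spanning set $\mathcal{B}_k\subset H_k$, a finite list of words in $s_1,s_2$ of length $|W_k|$, organized by the power of the central element $z$ (equivalently, by ``levels'' of the word), mirroring the structure one sees already in the known cases $\mathfrak S_3$ and $G_4$. The second step is to show $\mathcal{B}_k$ spans $H_k$ as an $R_k$-module. This is the computational heart of the argument: one takes an arbitrary product of generators and their inverses (using $s_i^{-1}=a_0^{-1}(s_i^{k-1}-a_{k-1}s_i^{k-2}-\cdots-a_1)$ to stay in the algebra generated by $s_1,s_2$), and repeatedly rewrites it, pushing it into the span of $\mathcal{B}_k$, using (i) the braid relation, (ii) the Hecke relation $s_i^k=\sum a_j s_i^j$, and (iii) centrality of $z$ to collect central powers and cap the complexity. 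One organizes this as a closure argument: show that $R_k\mathcal{B}_k$ is stable under left multiplication by $s_1$, $s_2$, $s_1^{-1}$, and $s_2^{-1}$; since it contains $1$, it is then all of $H_k$.

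The third step converts the spanning statement into the freeness statement. Having a spanning set of size $|W_k|$, it suffices to show that $H_k\otimes_{R_k}F$ has dimension exactly $|W_k|$ over the fraction field $F$ of $R_k$ (or over a suitable specialization), since then the $|W_k|$ spanning elements are $F$-linearly independent, hence $R_k$-linearly independent (as $R_k$ is an integral domain embedding in $F$), hence a basis. For this one invokes the work of Malle--Michel or the specialization to the group algebra: the specialization $a_{k-1}=\cdots=a_1=0$, $a_0=1$ sends $H_k$ onto $R_kW_k$, and combined with the known list of irreducible representations of $H_k$ of total squared dimension $|W_k|$ (available from \cite{Mallem} once a finite-rank spanning set exists, via Tits' deformation argument), one gets the lower bound $\dim_F H_k\otimes F\geq |W_k|$; the spanning set gives the matching upper bound.

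The main obstacle is the second step: controlling the rewriting process so that it terminates and genuinely lands in the span of a set of the right size $|W_k|$ ($=96$ for $G_8$, $=288$ for $G_{16}$). Unlike the Coxeter case there is no a priori normal form theorem, so one must find the right inductive parameter — presumably a combination of word length and the exponent of the central element $z$ — and check a finite but nontrivial number of reduction cases by hand (or machine), especially the interactions between high powers of $s_1$ and $s_2$ mediated by the braid relation. For $k=5$ this case analysis is genuinely large, and the delicate point is ensuring no reduction step increases the chosen complexity measure; a wrong choice of spanning set makes the closure fail. Once the spanning set is pinned down and shown to be closed under multiplication by $s_i^{\pm1}$, the rest follows formally.
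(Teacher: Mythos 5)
Your overall strategy is the one the paper follows: the theorem is reduced to exhibiting, for $k=4,5$, a spanning set of $H_k$ over $R_k$ of size $|W_k|$, obtained by a closure argument (the paper builds a $u_1$-bimodule $U$ out of double cosets $u_1\sigma u_1$ and powers of the nearly-central element $\omega=s_2s_1^2s_2$, and since $1\in U$ and $U$ is already a $u_1$-bimodule it only has to verify $s_2U\subset U$ --- checking $s_1^{\pm1}$ and $s_2^{-1}$ separately, as you propose, is redundant because $s_2^{-1}$ is a polynomial in $s_2$). Two substantive remarks. First, a numerical error that would derail your construction if taken literally: $|G_{16}|=600$, not $288$ (the paper's spanning set for $H_5$ has $120$ left $u_1$-module generators, each expanded over the $5$ generators of $u_1$). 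Second, for the passage from ``spanned by $|W_k|$ elements'' to ``free of rank $|W_k|$'' the paper does not run your generic-dimension count; it simply invokes Theorem 4.24 of Brou\'e--Malle--Rouquier (equivalently Proposition 2.4(1) of Marin), stated here as Proposition \ref{rp}. Your alternative --- bounding $\dim_F(H_k\otimes F)$ from below by the sum of squares of dimensions of the explicit Malle--Michel representations and then deducing $R_k$-linear independence of the spanning set --- is workable, but as phrased it flirts with circularity: you cannot get those representations ``via Tits' deformation'' before freeness is known; you must instead check directly from the matrix models that they are well-defined, irreducible and pairwise non-isomorphic over an extension of $F$. The citation of the general proposition avoids this entirely.

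The real issue is that your proposal defers exactly the part that constitutes the proof. The entire content of Sections \ref{sec2} and 4 of the paper is the explicit choice of the spanning set (the decomposition of $U$ into specific double cosets and powers of $\omega$, not of $z$) and the long chain of lemmas (Lemmas \ref{lem2}, \ref{oo}, \ref{cc}, \ref{ll}, \ref{lol} and Propositions \ref{prop1}, \ref{p1}, \ref{l2}, \ref{p2}, \ref{xx}) establishing $s_2U\subset U$; for $k=5$ this requires successively replacing $\omega^{\pm3},\omega^{\pm4},\omega^{\pm5}$ by more tractable words modulo the smaller sub-bimodules $U'',U''',U''''$. You correctly identify this as the main obstacle and correctly describe its shape, but you do not carry it out, and there is no a priori guarantee that a given candidate set closes up --- the paper's choice is the result of that verification. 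So the proposal is a correct plan coinciding with the paper's, not yet a proof.
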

By general arguments (see, for example, \cite{ivan}) this has for consequence the following:
\begin{cor} If $F$ is a suitably large extension of the field of fractions of $R_k$, then $H_k\otimes_{R_k}F$ is isomorphic to the group algebra $FW_k$.
\end{cor}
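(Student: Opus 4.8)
The plan is to deduce the corollary from Theorem 1.1 by the standard deformation argument (Tits' deformation theorem; see e.g.\ \cite{Geck}), the only non‑formal ingredient being Theorem 1.1 itself. Write $K=\mathrm{Frac}(R_k)=\mathbb{Q}(a_{k-1},\dots,a_1,a_0)$. By Theorem 1.1, $H_k$ is free of rank $|W_k|$ over $R_k$, so for every field extension $F$ of $K$ the $F$‑algebra $H_k\otimes_{R_k}F$ has dimension $|W_k|$; it therefore suffices to show that for a sufficiently large $F$ it is split semisimple with the same Wedderburn type (same number of matrix blocks, of the same sizes) as $FW_k$.

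First I would pin down the specialization that recovers the group algebra. Let $\theta_0\colon R_k\to\mathbb{C}$ be the ring homomorphism with $a_0\mapsto 1$ and $a_i\mapsto 0$ for $1\le i\le k-1$; it is well defined since $a_0$ is a unit of $R_k$ mapping to a unit. Then $H_k\otimes_{\theta_0}\mathbb{C}$ is $\mathbb{C}B_3$ modulo the two‑sided ideal generated by $s_1^k-1$ and $s_2^k-1$. Since $s_1^k$ and $s_2^k$ normally generate the kernel $N$ of the surjection $B_3\to W_k$ (Coxeter's presentation, recalled in the introduction), that ideal is exactly the kernel of $\mathbb{C}B_3\to\mathbb{C}W_k$, whence $H_k\otimes_{\theta_0}\mathbb{C}\cong\mathbb{C}W_k$, which is split semisimple by Maschke's theorem.

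Next I would transfer semisimplicity to the generic fibre. Fix an $R_k$‑basis of $H_k$ (which exists by Theorem 1.1) and let $\delta\in R_k$ be the corresponding discriminant of the trace form $(x,y)\mapsto\mathrm{tr}(L_{xy})$, where $L_z$ denotes left multiplication by $z$. Since $H_k\otimes_{\theta_0}\mathbb{C}\cong\mathbb{C}W_k$ is semisimple and $\mathbb{C}$ has characteristic $0$, $\delta$ is nonzero at the point of $\mathrm{Spec}\,R_k$ defined by $\theta_0$, hence $\delta\neq 0$ in the domain $R_k$; as $\mathrm{Spec}\,R_k$ is irreducible, $\delta$ does not vanish at the generic point, so the trace form of $H_k\otimes_{R_k}K$ is non‑degenerate. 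Its Jacobson radical is a nilpotent ideal, hence contained in the kernel of that form, hence zero, so $H_k\otimes_{R_k}K$ is semisimple. Now take $F$ to be a finite extension of $K$ that is simultaneously a splitting field for $H_k\otimes_{R_k}K$ and for $W_k$; then $H_k\otimes_{R_k}F=(H_k\otimes_{R_k}K)\otimes_KF\cong\prod_{i=1}^{r}M_{n_i}(F)$ and $FW_k\cong\prod_{j=1}^{r'}M_{m_j}(F)$ are both split semisimple. By Tits' deformation theorem applied to the $R_k$‑free algebra $H_k$ with the specialization $\theta_0$, the decomposition map induces a dimension‑preserving bijection between the simple modules of $H_k\otimes_{R_k}F$ and those of $FW_k$; hence $r=r'$ and $\{n_1,\dots,n_r\}=\{m_1,\dots,m_{r'}\}$ as multisets, and therefore $H_k\otimes_{R_k}F\cong\prod_iM_{n_i}(F)\cong FW_k$.

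I do not expect a genuine obstacle beyond Theorem 1.1 itself. The two points that need care are that the specialization $\theta_0$ really produces the group algebra — which rests on Coxeter's classification, already quoted in the introduction — and that $F$ be taken large enough to split $H_k\otimes_{R_k}K$ and $W_k$ at once; this is precisely why the statement only asserts the isomorphism over a ``suitably large'' extension rather than over $\mathrm{Frac}(R_k)$ itself.
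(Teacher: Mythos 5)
Your argument is correct and is precisely the ``general argument'' the paper delegates to its citation of \cite{ivan}: specialize at $a_0\mapsto 1$, $a_i\mapsto 0$ to recover $\CC W_k$, use the nonvanishing of the trace-form discriminant (guaranteed by Theorem 1.1, which gives freeness and hence a well-defined discriminant in the domain $R_k$) to get semisimplicity of the generic fibre, and conclude by Tits' deformation theorem. The only points you gloss over --- extending $\theta_0$ to the integral closure of $R_k$ in $F$ so that the decomposition map is defined --- are routine and handled in \cite{Geck}, so there is no gap.
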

In order to prove this theorem we need some preliminary results, which contain a lot of calculations between the images of some elements of the braid group inside the Hecke algebra. We hope that this will not discourage the reader to study the proof, since these calculations are not that complicated and they should be fairly easy to follow.

\section{Preliminaries}
\label{s}
\indent

Let $B_3$ be the braid group on 3 strands, given by generators the braids $s_1$ and $s_2$ and the single relation $s_1s_2s_1=s_2s_1s_2$, that we call braid relation.

We set $R_k=\ZZ[a_{k-1},...,a_1, a_0, a_0^{-1}]$, for $k=2,3,4,5$. Let $H_k$ denote the quotient of the group algebra $R_kB_3$ by the relations \begin{equation}s_i^k=a_{k-1}s_i^{k-1}+...+a_1s_i+a_0,\label{one} \end{equation}
for $i=1,2$. For $k=2, 3,4$ and 5 we call $H_k$ the quadratic, cubic, quartic and quintic Hecke algebra, respectively. 

We identify $s_i$ to their images in $H_k$. We multiply $(\ref{one})$ by $s_i^{-k}$ and since $a_0$ is invertible in $R_k$ we have:
\begin{equation}s_i^{-k}=-a_0^{-1}a_1s_i^{-k+1}-a_0^{-1}a_2s_i^{-k+2}-...-a_0^{-1}a_{k-1}s_i^{-1}+a_0^{-1}, \label{two}\end{equation}
for $i=1,2$. 
If we multiply ($\ref{two}$) with a suitable power of $s_i$ we can expand $s_i^{-n}$ as a linear combination of 
$s_i^{-n+1}$,$\dots$, $s_i^{-n+(k-1)}$, $s_i^{-n+k}$, for every $n\in \NN$.
Moreover, comparing (\ref{one}) and (\ref{two}), we can define an automorphism $\grF$  of $H_k$ as $\ZZ$-algebra, where $$\begin{array}{lcl}
s_i &\mapsto &s_i^{-1}, \text { for } i=1,2\\
a _j&\mapsto&-a_0^{-1}a_{k-j},\text{ for } j=1,...,k-1\\
a_0&\mapsto&a_0^{-1}
\end{array}$$

We will prove now an easy lemma that plays an important role in the sequel. This lemma is in fact a generalization of  lemma 2.1 of \cite{Ivan}. 
\begin{lem}For every $m \in \ZZ$ we have  
	$s_2s_1^ms_2^{-1}=s_1^{-1}s_2^ms_1$ and $s_2^{-1}s_1^ms_2=s_1s_2^ms_1^{-1}$.
	\label{lem1}
\end{lem}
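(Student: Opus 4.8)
The plan is to establish the first identity $s_2 s_1^m s_2^{-1} = s_1^{-1} s_2^m s_1$ and then deduce the second from it by applying the automorphism $\grF$ (which sends each $s_i$ to $s_i^{-1}$ and hence sends the first relation, read for exponent $m$, to the second). So the whole problem reduces to proving $s_2 s_1^m s_2^{-1} = s_1^{-1} s_2^m s_1$ for all $m \in \ZZ$.

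For this I would argue by induction on $|m|$, starting from the base cases $m = 0$ (both sides equal $1$) and $m = 1$. The case $m = 1$ is exactly the braid relation rewritten: from $s_1 s_2 s_1 = s_2 s_1 s_2$ we get $s_2 s_1 s_2^{-1} = s_1^{-1} s_2 s_1$ by multiplying on the left by $s_1^{-1}$ and on the right by $s_2^{-1}$. For the inductive step from $m$ to $m+1$, write $s_2 s_1^{m+1} s_2^{-1} = (s_2 s_1^m s_2^{-1})(s_2 s_1 s_2^{-1})$ and apply the induction hypothesis to the first factor and the $m=1$ case to the second, obtaining $(s_1^{-1} s_2^m s_1)(s_1^{-1} s_2 s_1) = s_1^{-1} s_2^{m+1} s_1$, where the middle $s_1 s_1^{-1}$ cancels. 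The step from $m$ to $m-1$ is the same computation run with $s_1^{-1}$ in place of $s_1$: multiply the known identity for $m$ on the right by $s_2 s_1^{-1} s_2^{-1}$, and note that $s_2 s_1^{-1} s_2^{-1} = (s_2 s_1 s_2^{-1})^{-1} = (s_1^{-1} s_2 s_1)^{-1} = s_1^{-1} s_2^{-1} s_1$. Thus the identity propagates to all integers $m$, positive and negative, purely from the braid relation — note that no Hecke relation is used here, so this is really an identity in $B_3$.

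Then, to get the second identity, I apply the $\ZZ$-algebra automorphism $\grF$ to $s_2 s_1^m s_2^{-1} = s_1^{-1} s_2^m s_1$: since $\grF(s_i) = s_i^{-1}$, the left-hand side maps to $s_2^{-1} s_1^{-m} s_2$ and the right-hand side to $s_1 s_2^{-m} s_1^{-1}$, giving $s_2^{-1} s_1^{-m} s_2 = s_1 s_2^{-m} s_1^{-1}$; replacing $m$ by $-m$ (legitimate since the first identity holds for all integers) yields exactly $s_2^{-1} s_1^m s_2 = s_1 s_2^m s_1^{-1}$. Alternatively one can simply run the same induction directly, or conjugate the first identity appropriately, but invoking $\grF$ is cleanest.

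There is no real obstacle here: the only thing to be careful about is organizing the induction so that both directions ($m \to m+1$ and $m \to m-1$) are handled, and making sure the base case is the braid relation in the rearranged form $s_2 s_1 s_2^{-1} = s_1^{-1} s_2 s_1$. The mild bookkeeping point worth stating explicitly is that the identity is an identity in the group $B_3$ itself (equivalently in $R_k B_3$), established before quotienting, so it holds a fortiori in every $H_k$.
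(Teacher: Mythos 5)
Your proof is correct and rests on the same idea as the paper's: the braid relation rewritten as a conjugation identity, together with the fact that conjugation commutes with integer powers. The paper gets this in one line by conjugating with $s_1s_2$ (from $(s_1s_2)s_1(s_1s_2)^{-1}=s_2$ one immediately gets $(s_1s_2)s_1^m(s_1s_2)^{-1}=s_2^m$ for all $m\in\ZZ$, which is the first equality, and the second follows symmetrically by conjugating with $s_2s_1$), so your induction and the appeal to $\grF$ are unnecessary detours, but the argument is sound.
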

\begin{proof}
	By using the braid relation we have that $(s_1s_2)s_1(s_1s_2)^{-1}=s_2$. Therefore, for every $m \in \ZZ$ we have $(s_1s_2)s_1^m(s_1s_2)^{-1}=s_2^m$, that gives us the first equality. Similarly, we prove the second one.
\end{proof}
If we assume $m$ of lemma \ref{lem1} to be positive we have  $s_1s_2s_1^n=s_2^ns_1s_2$ and $s_1^ns_2s_1=s_2s_1s_2^n$, where $n\in \NN$. Taking inverses, we also get $ s_1^{-n}s_2^{-1}s_1^{-1}=s_2^{-1}s_1^{-1}s_2^{-n}$ and $ s_1^{-1}s_2^{-1}s_1^{-n}=s_1^{-n}s_2^{-1}s_1^{-1}$. We call all the above relations \emph{the generalized braid relations.}

We denote by $u_i$ the $R_k$-subalgebra of $H_k$ generated by $s_i$ (or equivalently by $s_i^{-1}$) and by $u_i^{\times}$ the group of units of $u_i$, where $i=1,2$. We also set $\grv:=s_2s_1^2s_2$. Since the center of $B_3$ is the subgroup generated by the element $z=s_1^2\grv$ (see, for example, theorem 1.24 of \cite{Turaev}), for all $x\in u_1$ and $m\in \ZZ$ we have that $x\grv^m=\grv^mx$. We will see later that $\grv$ plays an important role in the description of $H_k$.

Let $W_k$ be the quotient group $B_3/\langle s_i^k \rangle$, 
$k=2, 3, 4$ and 5. From a Coxeter's theorem (see \textsection10 in \cite{Coxeter}) we know that $W_k$ is finite. Let $r_k$ denote the order of $W_k$.
Our goal now is to prove that $H_k$ is a free $R_k$-module of rank $r_k$, a statement that holds for $H_2$ since $W_2=\mathfrak{S}_3$ is a Coxeter group (see \cite{Geck}, Lemma 4.4.3).
For the remaining cases, we will use the following proposition.
\begin{prop}Let $k\in\{3,4,5\}$. If $H_k$ is generated as a module over $R_k$ by $r_k$ elements, then $H_k$ is a free $R_k$-module of rank $r_k$.
	\label{rp}
\end{prop}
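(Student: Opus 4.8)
The plan is to exploit the fact that the freeness conjecture is already known for the symmetric group $\mathfrak{S}_3 = W_2$, i.e. that $H_2$ is free of rank $r_2 = 6$ over $R_2$, and to transfer this through a specialization argument combined with a generic flatness/Nakayama-type argument. The key observation is that for $k \in \{3,4,5\}$ the algebra $H_k$ is obtained from the "universal" situation by imposing the degree-$k$ relation $(\ref{one})$ in place of a degree-$2$ one; and the lower-degree Hecke algebra $H_2$ arises from $H_k$ by a ring homomorphism $R_k \to R_2[\text{something}]$ sending the generic quartic/quintic polynomial to a product of a generic quadratic polynomial with a split linear (or quadratic) factor. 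More precisely, I would factor $X^k - a_{k-1}X^{k-1} - \cdots - a_0$ over a suitable extension as a quadratic times a product of linear factors, choose the specialization of the $a_i$ accordingly, and thereby produce a surjection from a scalar extension of $H_k$ onto a scalar extension of $H_2$ (or of a slightly larger quadratic-type algebra whose freeness is likewise known).

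The concrete steps, in order, are as follows. First I would invoke the hypothesis: $H_k$ is spanned over $R_k$ by $r_k$ explicit elements, say $b_1,\dots,b_{r_k}$ (these are produced in the preliminary sections of the paper). This gives a surjection $R_k^{\,r_k} \twoheadrightarrow H_k$ of $R_k$-modules, and it suffices to show this map is injective, equivalently that any $R_k$-linear relation $\sum_j \gamma_j b_j = 0$ in $H_k$ forces all $\gamma_j = 0$. Second, I would pass to the fraction field: since $R_k$ is a domain and $H_k \otimes_{R_k} \mathrm{Frac}(R_k)$ is spanned by $\le r_k$ elements, it suffices to show its dimension over $\mathrm{Frac}(R_k)$ is exactly $r_k$, because then the $b_j$ are linearly independent over $\mathrm{Frac}(R_k)$, hence over $R_k$, hence form a basis and $H_k$ is free of rank $r_k$. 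Third, to get the lower bound $\dim_{\mathrm{Frac}(R_k)} H_k \otimes \mathrm{Frac}(R_k) \ge r_k$, I would exhibit a specialization $\theta: R_k \to L$ into a field (or a suitable domain) for which $H_k \otimes_\theta L$ has dimension exactly $r_k = |W_k|$; the natural choice is a specialization sending the $a_i$ to the coefficients making $X^k - \cdots - a_0$ a product of distinct roots of unity, i.e. the specialization at which $H_k$ degenerates to the group algebra $L W_k$. Upper semicontinuity of the dimension of a finitely generated module along the generic point then gives $\dim H_k \otimes \mathrm{Frac}(R_k) \ge \dim H_k \otimes_\theta L = r_k$, which combined with the spanning bound $\le r_k$ closes the argument.

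The delicate point — and the one place where care is genuinely needed — is the third step: producing a specialization $\theta$ at which one can prove $\dim H_k \otimes_\theta L = r_k$ without already having the result one is trying to prove. Specializing to the group algebra $L W_k$ is only legitimate if one knows the specialized algebra is not larger than $|W_k|$-dimensional and not smaller; the "not larger" direction is exactly the content of the spanning hypothesis (the same $r_k$ generators specialize), so what is really needed is "not smaller", i.e. that $L W_k$ itself, with $W_k$ the Coxeter-type quotient, genuinely has dimension $|W_k|$ — which is clear — together with the fact that the $r_k$ specialized generators remain linearly independent there. Here one uses that $W_k$ is a finite group (Coxeter's theorem, cited above) and that the images of $b_1,\dots,b_{r_k}$ in $L W_k$ can be matched up with the group elements; if the $b_j$ were chosen as lifts of the elements of $W_k$ this is immediate. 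So the real obstacle is bookkeeping: one must make sure the spanning set used in the hypothesis is compatible with a chosen set-theoretic section $W_k \hookrightarrow B_3$, so that the specialization argument identifies them. Once that alignment is arranged, the proposition follows from the generic-flatness/semicontinuity principle with no further computation.
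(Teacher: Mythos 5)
Your overall strategy (surjection $R_k^{\,r_k}\twoheadrightarrow H_k$ from the spanning hypothesis, then a dimension count on the generic fibre) is the right skeleton, and it is essentially the skeleton of the result the paper invokes; the paper itself gives a two-line proof by citing Theorem 4.24 of Brou\'e--Malle--Rouquier, or Proposition 2.4(1) of Marin, which encapsulate exactly this argument. However, your third step contains a genuine error: semicontinuity goes the wrong way. For a finitely generated module $M$ over a Noetherian domain $R$, the function $\mathfrak{p}\mapsto\dim_{\kappa(\mathfrak{p})}M\otimes\kappa(\mathfrak{p})$ is upper semicontinuous, which means the set where the dimension is $\geq n$ is \emph{closed}; since the generic point lies in every nonempty open set, the generic fibre dimension is the \emph{minimum} over all fibres. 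So from $\dim_L H_k\otimes_\theta L=r_k$ at the special point you may only conclude $\dim_{\mathrm{Frac}(R_k)}H_k\otimes\mathrm{Frac}(R_k)\leq r_k$ --- which you already knew from the spanning hypothesis --- and not the lower bound $\geq r_k$ that your argument hinges on. (Compare $M=\mathfrak{m}\subset k[x,y]$: generic fibre of dimension $1$, fibre at the origin of dimension $2$.) The lower bound on the generic fibre is precisely the nontrivial content of the proposition and cannot come from semicontinuity alone.

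What is actually needed to close this gap is a lifting argument from the special fibre to the generic fibre: one localizes (and completes, or henselizes) $R_k$ at the kernel of the specialization $\theta$ sending the parameters to the $k$-th roots of unity, observes that the special fibre is the group algebra $LW_k$, which is split semisimple of dimension $|W_k|$ by Maschke, and then lifts its idempotents (equivalently, applies Tits' deformation theorem in the direction special~$\to$~generic) to produce enough irreducible representations of $H_k\otimes\mathrm{Frac}(R_k)$ to force $\dim_{\mathrm{Frac}(R_k)}H_k\otimes\mathrm{Frac}(R_k)\geq\sum_{\chi\in\mathrm{Irr}(W_k)}(\dim\chi)^2=|W_k|$. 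This is the step supplied by the cited references and absent from your proposal. Two smaller remarks: the opening paragraph about factoring the degree-$k$ polynomial so as to relate $H_k$ to $H_2$ plays no role in your actual steps and is not how the argument goes (the relevant specialization is to $LW_k$ itself, not to a quadratic algebra); and the closing worry about matching the spanning set with a set-theoretic section $W_k\hookrightarrow B_3$ is unnecessary, since once the generic fibre is known to have dimension exactly $r_k$, \emph{any} spanning set of $r_k$ elements is automatically a basis.
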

\begin{proof}The algebras $H_k$ are the generic Hecke algebras of the complex reflection groups $G_4, G_8$ and $G_{16}$, respectively in the sense of Brou\'e, Malle and Rouquier (see \cite{BMR}). Hence, the result follows from theorem 4.24 in \cite{BMR} or from proposition 2.4(1) in \cite{ivan}.
\end{proof}
Therefore, we need to find a spanning set of $H_k$, $k=3,4,5$  of $r_k$ elements. However, for the cubic Hecke algebra $H_3$ we have that $H_3=u_1+u_1s_2u_1+u_1s_2^{-1}u_1+u_1s_2s_1^{-1}s_2$ (see \cite{Ivan}, theorem 3.2(3)) and, hence, $H_3$ is spanned as a left $u_1$-module by 8 elements. Since $u_1$ is spanned by 3 elements as $R_3$-module, we have that $H_3$ is spanned over $R_3$ by $r_3=24$ elements.

\section{The quartic Hecke algebra $H_4$}
\label{sec2}
\indent

Our ring of definition is $R_4=\ZZ[a, b,c,d, d^{-1}]$ and therefore, relation (\ref{one}) becomes  $s_i^4=as_i^3+bs_i^2+cs_i+d$, for $i=1,2$. We set

 $$\begin{array}{lcl}
U'&=&u_1u_2u_1+u_1s_2s_1^{-1}s_2u_1+
u_1s_2^{-1}s_1s_2^{-1}u_1+u_1s_2^{-1}s_1^{-2}s_2^{-1} \\  U&=&U'+u_1s_2s_1^{-2}s_2u_1+u_1s_2^{-2}s_1^{-2}s_2^{-2}u_1.\end{array}$$
It is obvious that $U$ is a $u_1$-bimodule and that $U'$ is a $u_1$-sub-bimodule of $U$. Before proving our main theorem (theorem \ref{th}) we  need a few preliminaries results. 
\begin{lem}For every $m \in \ZZ$ we have 
	\begin{itemize}[leftmargin=0.6cm]
		\item[(i)] $s_2s_1^{m}s_2\in U$.
		\item [(ii)]$s_2^{-1}s_1^{m}s_2^{-1}\in U'$.
		\item [(iii)]$s_2^{-2}s_1^{m}s_2^{-1}\in U'$.
	\end{itemize}
	\label{lem2}
\end{lem}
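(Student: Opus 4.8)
The plan is to reduce each of the three memberships to a finite computation over $\mathbb{Z}$, using the quartic relation to bound the exponent $m$ and using the generalized braid relations and Lemma~\ref{lem1} to rewrite the words that occur. Since $u_1$ is spanned over $R_4$ by $1, s_1, s_1^2, s_1^3$, relation~(\ref{one}) and its inverse analogue~(\ref{two}) let us expand any $s_1^m$ as an $R_4$-linear combination of $s_1^{-1}, 1, s_1, s_1^2, s_1^3$ (indeed of any four consecutive powers). Consequently, to prove each statement for all $m \in \mathbb{Z}$ it suffices to prove it for, say, $m \in \{-1, 0, 1, 2, 3\}$, or whatever small finite window is most convenient; the general case then follows by $R_4$-linearity because $U$ and $U'$ are $u_1$-bimodules (in particular $R_4$-submodules). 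The same remark applies after conjugations: anything of the form $s_2 s_1^m s_2$ with $m$ in the chosen window is what we must handle directly.

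For part (i), I would argue as follows. For $m = 0$ we get $s_2^2 \in u_2 \subseteq u_1 u_2 u_1 \subseteq U$. For $m = 1$ and $m = 2$, $s_2 s_1 s_2$ and $s_2 s_1^2 s_2 = \omega$ are handled: the first lies in $u_2 u_1 u_2$, which one rewrites into $U$ via the braid relation $s_2 s_1 s_2 = s_1 s_2 s_1 \in u_1 u_2 u_1$, and $\omega$ is treated as a generator of one of the summands (note $u_1 s_2 s_1^{-2} s_2 u_1$ and $u_1 s_2^{-2} s_1^{-2} s_2^{-2} u_1$ appear in $U \setminus U'$ precisely to absorb such terms). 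For $m = -1$, the term $s_2 s_1^{-1} s_2$ sits in the summand $u_1 s_2 s_1^{-1} s_2 u_1 \subseteq U' \subseteq U$ directly. The remaining window value, $m = 3$, is where the quartic relation does real work: write $s_1^3 = d^{-1}(s_1^4 - a s_1^3 - b s_1^2 - c s_1) \cdot(\dots)$ — more precisely, use~(\ref{one}) to replace $s_1^3$ by a combination of $s_1^4, s_1^2, s_1, 1$ — wait, that raises the exponent; instead use~(\ref{two}) in the form that expresses $s_1^{-1}$, or equivalently solve~(\ref{one}) for the lowest power, to bring $s_2 s_1^3 s_2$ into the span of $s_2 s_1^m s_2$ with $m \in \{-1,0,1,2\}$, which are already done. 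Thus part (i) closes by downward and upward induction on $|m|$ with the base window established by hand.

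Parts (ii) and (iii) are analogous but use the ``inverse'' summands of $U'$. For (ii), $s_2^{-1} s_1^{-2} s_2^{-1}$ is literally one of the summands of $U'$; $s_2^{-1} s_1^{-1} s_2^{-1}$ and $s_2^{-1} s_1 s_2^{-1}$ come from $u_2^{-1}$-type words rewritten by the (inverse) braid relation $s_2^{-1} s_1^{-1} s_2^{-1} = s_1^{-1} s_2^{-1} s_1^{-1} \in u_1 u_2 u_1$, etc., and the summand $u_1 s_2^{-1} s_1 s_2^{-1} u_1$ handles $m=1$; the boundary exponents are again eliminated with~(\ref{one})/(\ref{two}) and Lemma~\ref{lem1}. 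For (iii) one first uses Lemma~\ref{lem1} (e.g. $s_2^{-1} s_1^m s_2 = s_1 s_2^m s_1^{-1}$, hence $s_2^{-2} s_1^m s_2^{-1} = s_2^{-1}(s_2^{-1} s_1^m s_2) s_2^{-2} = s_2^{-1} s_1 s_2^m s_1^{-1} s_2^{-2}$) to trade the awkward $s_2^{-2}$ on the left for a block in which the generalized braid relations apply, then reduce as in (ii); alternatively expand $s_2^{-2}$ via~(\ref{two}) to get a combination of $s_2^{-1} s_1^m s_2^{-1}$ (covered by (ii)) and $s_2^{-1} s_1^m s_2^{-2}$, $s_1^m s_2^{-2}$, the latter two lying in $u_2 u_1 u_2$-type pieces of $U'$ after a braid rewrite.

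The main obstacle I expect is purely bookkeeping: keeping track, across all the conjugations and the $R_4$-coefficient juggling, that every intermediate word genuinely lands in one of the six explicit summands of $U$ (and, for (ii) and (iii), in the four summands of $U'$ specifically, not merely in $U$), and correctly handling the two boundary exponents of the chosen window where the quartic relation must be invoked. There is no conceptual difficulty — the point is that $U$ was defined with exactly the summands needed to make this induction go through — but the case analysis for $m = 3$ (and its mirror $m = -2$ after inversion) is the step that actually uses the degree-$4$ hypothesis and so is the one that could hide a sign or coefficient error.
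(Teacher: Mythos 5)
Your overall strategy is the paper's: use relations (\ref{one}) and (\ref{two}) to confine $m$ to a window of four consecutive exponents, then dispose of each case via Lemma \ref{lem1}, the generalized braid relations, and the explicit summands of $U$ and $U'$. The paper chooses the window $m\in\{-2,-1,0,1\}$, which makes every case of (i) and (ii) either an element of $u_1u_2u_1$ after one braid rewrite or literally a generator of one of the defining summands, so that only (iii) (for $m=1$ and $m=-2$) requires actual computation.

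Two points in your write-up do not go through as stated. First, your window for (i) forces you to handle $s_2s_1^2s_2=\grv$, and you assert it is ``a generator of one of the summands'', pointing at $u_1s_2s_1^{-2}s_2u_1$; but that summand has exponent $-2$, not $+2$, so $\grv$ is not contained in it. One must still expand $s_1^2$ as an $R_4$-combination of $s_1,1,s_1^{-1},s_1^{-2}$ to reduce $\grv$ to the four cases $m\in\{-2,-1,0,1\}$; choosing that window from the start avoids the issue entirely. Second, in (iii) your first suggested route, conjugating by Lemma \ref{lem1} to reach $s_2^{-1}s_1s_2^{m}s_1^{-1}s_2^{-2}$, lengthens the word and does not visibly land in $U'$; and you should not appeal to ``$u_2u_1u_2$-type pieces of $U'$'', since $u_2u_1u_2\subset U$ is Proposition \ref{prop1}, which comes after this lemma and uses it. Your second route (expanding $s_2^{-2}$ as a combination of $s_2^{-1},1,s_2,s_2^2$) is the paper's and does work, but the residual term $s_2^{2}s_1^{m}s_2^{-1}$ must actually be checked for the nontrivial exponents, e.g.\ $s_2^2s_1s_2^{-1}=(s_2s_1^{-1}s_2)s_1\in U'$; for $m=-2$ the paper instead computes $s_2^{-2}s_1^{-2}s_2^{-1}=s_1^{-1}(s_2^{-1}s_1^{-3}s_2^{-1})s_1$ and invokes (ii). These are bookkeeping repairs rather than conceptual ones, but they are exactly the places where the lemma has content.
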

\begin{proof}By using the relations (\ref{one}) and (\ref{two}) we can assume that $m\in\{0,1,-1,-2\}$. Hence, we only have to prove (iii), since (i) and (ii)  follow from the definition of $U$ and $U'$ and the braid relation.  For (iii), we can assume that $m\in\{-2, 1\}$, since the case where $m=-1$  is obvious by using the generalized braid relations. We have: 
	$s_2^{-2}s_1^{-2}s_2^{-1}=s_1^{-1}(s_1s_2^{-2}s_1^{-1})s_1^{-1}s_2^{-1}
	=s_1^{-1}s_2^{-1}s_1^{-2}(s_2s_1^{-1}s_2^{-1})
	=s_1^{-1}(s_2^{-1}s_1^{-3}s_2^{-1})s_1.$ The result then follows from $(ii)$.
	For the element $s_2^{-2}s_1s_2^{-1}$, we expand $s_2^{-2}$ as  a linear combination of $s_2^{-1}, 1, s_2, s_2^2$ and by using the definition of $U'$ and lemma $\ref{lem1}$,  we only have to check that $s_2^2s_1s_2^{-1}\in U'$. Indeed, we have: $s_2^2s_1s_2^{-1}= s_2(s_2s_1s_2^{-1})=(s_2s_1^{-1}s_2)s_1
	\in U'.$
\end{proof}
\begin{prop}$u_2u_1u_2\subset U.$
	\label{prop1}
\end{prop}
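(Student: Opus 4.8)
The plan is to show that every product of the form $s_2^{\pm 1} u_1 s_2^{\pm 1}$ (and more generally $s_2^{\varepsilon} s_1^m s_2^{\varepsilon'}$ for $\varepsilon,\varepsilon'\in\{1,-1\}$ and $m\in\ZZ$) lies in $U$, since $u_2$ is spanned over $R_4$ by $1, s_2, s_2^{-1}, s_2^2$ (equivalently by $1,s_2,s_2^2,s_2^3$), so $u_2u_1u_2$ is spanned as a left-and-right combination of such middle terms. Concretely, $u_2u_1u_2$ is the $R_4$-span of the elements $s_2^{i}\,x\,s_2^{j}$ with $x$ ranging over a spanning set of $u_1$ and $i,j$ ranging over a set of four consecutive powers; using the quartic relation (\ref{one}) and its inverse (\ref{two}) to reduce $x$, it suffices to treat $x = s_1^m$ with $m\in\{0,1,-1,-2\}$, and then to control $s_2^{i} s_1^{m} s_2^{j}$. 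Because $U$ is a $u_1$-bimodule, the cases where $i$ or $j$ is $0$ are immediate. So the real content is the four combinations $(i,j)\in\{(1,1),(1,-1),(-1,1),(-1,-1)\}$, together with reductions of higher powers of $s_2$ back into these via (\ref{one})/(\ref{two}).

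The first step is to handle $s_2 s_1^m s_2 \in U$ and $s_2^{-1} s_1^m s_2^{-1}\in U' \subset U$: these are exactly parts (i) and (ii) of Lemma \ref{lem2}, so they are already available. Next I would treat the mixed terms $s_2 s_1^m s_2^{-1}$ and $s_2^{-1} s_1^m s_2$: by Lemma \ref{lem1} these equal $s_1^{-1} s_2^m s_1$ and $s_1 s_2^m s_1^{-1}$ respectively, and then one expands $s_2^m$ (after reducing $m$ via (\ref{one})/(\ref{two})) as an $R_4$-combination of $1, s_2^{\pm1}, s_2^2$; the terms $s_1^{\pm1} s_2^{\pm1} s_1^{\mp1}$ fall back into $u_1u_2u_1 \subset U$ (possibly after another application of Lemma \ref{lem1}), and $s_1^{-1} s_2^2 s_1$ is handled by writing $s_2^2 = s_2\cdot s_2$ and using the braid relation as in the last line of the proof of Lemma \ref{lem2}. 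Finally, to reduce a general $s_2^{i} s_1^m s_2^{j}$ with $i$ or $j$ outside $\{-1,0,1\}$, I would use (\ref{two}) to rewrite, say, $s_2^{-2}$ as a combination of $s_2^{-1},1,s_2,s_2^2$ and then appeal to Lemma \ref{lem2}(iii) for the genuinely new term $s_2^{-2}s_1^m s_2^{-1}$; the symmetric left-right situations are analogous.

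The main obstacle I anticipate is bookkeeping rather than conceptual: one must make sure that after all the rewriting with the generalized braid relations and the quartic relations, every term genuinely lands in one of the six summands defining $U$ — in particular that nothing escapes into a shape like $u_1 s_2 s_1^{-2} s_2^{-1} u_1$ or a longer word that is not manifestly in $U$. The delicate cases are those producing $s_2^{\pm 2}$ in the middle, since they need the specific identity $s_2^{-2}s_1^{-2}s_2^{-1} = s_1^{-1}(s_2^{-1}s_1^{-3}s_2^{-1})s_1$ type manipulations from Lemma \ref{lem2}(iii) to collapse back into $U'$. Once those are in hand, the inclusion $u_2 u_1 u_2 \subset U$ follows by linearity and the bimodule structure of $U$.
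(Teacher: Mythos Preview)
Your outline is essentially the same strategy the paper uses --- a case analysis on the exponents of $s_2^{\alpha}s_1^{\beta}s_2^{\gamma}$ using the braid relations and the quartic relation --- but as written it has a genuine gap, not just bookkeeping.

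The issue is that $u_2$ has rank four, so whatever spanning set you choose for the outer exponents you cannot stay inside $\{-1,0,1\}$. Your last paragraph proposes to handle the remaining exponent (say $-2$) by expanding $s_2^{-2}$ as an $R_4$-combination of $s_2^{-1},1,s_2,s_2^{2}$ and then invoking Lemma~\ref{lem2}(iii). But this is circular: the expansion produces a term with $s_2^{2}$, which is again outside $\{-1,0,1\}$, so you have only traded one extremal exponent for another. Lemma~\ref{lem2}(iii) and its left--right mirror only cover the patterns $s_2^{-2}s_1^{m}s_2^{-1}$ and $s_2^{-1}s_1^{m}s_2^{-2}$; the cases $s_2\,s_1^{m}s_2^{-2}$, $s_2^{-2}s_1^{m}s_2$, and especially $s_2^{-2}s_1^{m}s_2^{-2}$ are not accounted for in your sketch.

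In the paper these are exactly the cases that carry the content of the proof and they are treated by specific braid manipulations rather than by generic expansion of $s_2^{\pm2}$. For instance, $s_2s_1^{-2}s_2^{-2}$ is handled by expanding $s_1^{-2}$ (not $s_2^{-2}$) so that the only new term is $s_2s_1^{2}s_2^{-2}$, which is then rewritten via Lemma~\ref{lem1} as $s_1^{-1}(s_2s_1^{-1}s_2)s_1\in U'$. The case $s_2^{-2}s_1^{\beta}s_2$ is reduced to the already-treated $\alpha=-1$ cases by writing $s_2^{-2}s_1^{\beta}s_2=(s_2^{-1}s_1s_2^{\beta})s_1^{-1}$. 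And $s_2^{-2}s_1\,s_2^{-2}$ is reduced, via $s_1(s_1^{-1}s_2^{-2}s_1)s_2^{-2}=s_1(s_2s_1^{-2}s_2^{-3})$, to the $\alpha=1$ cases. None of these reductions follows from Lemma~\ref{lem2}(iii) alone, and your plan does not provide a substitute. So the missing idea is that the ``extra'' exponent cases must be handled by targeted braid rewrites (often moving the problem into the middle $s_1$-power, or reducing to a previously treated $\alpha$), not by re-expanding $s_2^{\pm 2}$.
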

\begin{proof}We need to prove that every element of the form $s_2^{\gra}s_1^{\grb}s_2^{\grg}$ belongs to $U$, for $\gra,\grb,\grg\in\{-2,-1,0,1\}$. However, when $\gra\grb\grg=0$ the result is obvious. Therefore, we can assume $\gra,\grb,\grg\in\{-2,-1,1\}$. We have the following cases:
	\begin{itemize}[leftmargin=*]
		\item\underline{$\gra=1$}:
		The cases where $\grg\in\{-1, 1\}$ follow from lemmas $\ref{lem1}$ and \ref{lem2}(i). Hence, we need to prove that $s_2s_1^{\grb}s_2^{-2}\in U$. 
		For $\grb=-1$ we use lemma $\ref{lem1}$ and we have $s_2s_1^{-1}s_2^{-2}=(s_2s_1^{-1}s_2^{-1})s_2^{-1}=s_1^{-1}(s_2^{-1}s_1s_2^{-1})\in U.$
		For $\grb=1$ we  expand $s_2^{-2}$ as  a linear combination of $s_2^{-1}, 1, s_2, s_2^2$ and the result follows from the cases where $\grg\in\{-1,0,1\}$ and the generalized braid relations. It remains 
		to prove that $s_2s_1^{-2}s_2^{-2}\in U$. By expanding now $s_1^{-2}$ as  a linear combination of $s_1^{-1}, 1, s_1, s_1^2$  we only need to prove that $s_2s_1^2s_2^{-2}\in U$ (the rest of the cases correspond to $b=-1$, $b=0$ and $b=1$).
	We use  lemma $\ref{lem1}$ and we have:
		$s_2s_1^2s_2^{-2}=(s_2s_1^2s_2^{-1})s_2^{-1}=s_1^{-1}s_2(s_2s_1s_2^{-1})=
		s_1^{-1}(s_2s_1^{-1}s_2)s_1 \in U.$
		\item \underline{$\gra=-1$}: Exactly as in the case where $\gra=1$, we only have to prove that $s_2^{-1}s_1^{\grb}s_2^{-2}\in U$. For $\grb=-1$ the result is obvious by using the generalized braid relations. For $\grb=-2$ we have:
		$s_2^{-1}s_1^{-2}s_2^{-2}=(s_2^{-1}s_1^{-2}s_2)s_2^{-3}=s_1s_2^{-1}(s_2^{-1}s_
		1^{-1}s_2^{-3})
		=s_1(s_2^{-1}s_1^{-3}s_2^{-1})s_1^{-1}$. However,  by lemma \ref{lem2}(ii) we have that the element $s_2^{-1}s_1^{-3}s_2^{-1}$ is inside $U'$ and, hence, inside $U$. It remains to prove that
		$s_2^{-1}s_1s_2^{-2}\in U$.  For this purpose, we expand $s_2^{-2}$ as  a linear combination of $s_2^{-1}, 1, s_2, s_2^2$ and by the definition of $U$ and lemma \ref{lem1} we only need to prove that $s_2^{-1}s_1s_2^{2}\in U$.
		Indeed, using lemma \ref{lem1} again we have:
		$s_2^{-1}s_1s_2^2=(s_2^{-1}s_1s_2)s_2
		=s_1(s_2s_1^{-1}s_2)
		\in U.$  \item
		\underline{$\gra=-2$}:
		We can assume that $\grg\in\{1, -2\}$, since the case where $\grg=-1$ follows immediately from lemma \ref{lem2}$(iii)$.
		For $\grg=1$ we use lemma $\ref{lem1}$ and we have $s_2^{-2}s_1^{\grb}s_2=s_2^{-1}(s_2^{-1}s_1^{\grb}s_2)=(s_2^{-1}s_1s_2^{\grb})s_1^{-1}$. The latter is an element in $U$, as we proved in the case where $\gra=-1$. For $\grg=-2$ we only need to prove the cases where $\grb=\{-1,1\}$, since the case where $\grb=-2$ follows from the definition of $U$. We use the generalized braid relations and we have  $s_2^{-2}s_1^{-1}s_2^{-2}=(s_2^{-2}s_1^{-1}s_2^{-1})s_2^{-1}=s_1^{-1}(s_2^{-1}s_1^{-2}s_2^{-1})\in U$. Moreover,  $s_2^{-2}s_1s_2^{-2}=s_1(s_1^{-1}s_2^{-2}s_1)s_2^{-2}= s_1(s_2s_1^{-2}s_2^{-3})$. The result follows from the case where $\gra=1$, if we expand $s_2^{-3}$ as a linear combination of $s_2^{-2}$, $s_2^{-1}$, 1 and $s_2$.
			\qedhere
	\end{itemize}
\end{proof}
We can now prove the main theorem of this section.
\begin{thm}
	\mbox{}  
	\vspace*{-\parsep}
	\vspace*{-\baselineskip}\\ 
	\begin{itemize}[leftmargin=0.6cm]
		\item [(i)]
		$U=u_1u_2u_1+u_1s_2s_1^{-1}s_2u_1
		+u_1s_2^{-1}s_1s_2^{-1}u_1
		+u_1\grv
		+u_1\grv^{-1}
		+u_1\grv^{-2}$.
		\item[(ii)]$H_4=U$.
	\end{itemize}
	\label{th}
\end{thm}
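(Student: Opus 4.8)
The plan is to establish (i) and (ii) essentially simultaneously, using Proposition \ref{prop1} together with Lemma \ref{lem2} as the workhorses, and the observation that $\grv=s_2s_1^2s_2$, $\grv^{-1}=s_2^{-1}s_1^{-2}s_2^{-1}$, and $\grv^{-2}=s_2^{-2}s_1^{-2}s_2^{-2}$ (the last by the centrality of $z=s_1^2\grv$, which lets $\grv$ commute with $u_1$, so that $u_1\grv^{-2}$ is indeed a left $u_1$-module matching the summand $u_1s_2^{-2}s_1^{-2}s_2^{-2}u_1$). For part (i), I would argue that the extra generators appearing in the definition of $U'$ and $U$ beyond the listed ones, namely $u_1s_2^{-1}s_1^{-2}s_2^{-1}$, $u_1s_2s_1^{-2}s_2u_1$ and $u_1s_2^{-2}s_1^{-2}s_2^{-2}u_1$, collapse onto the cyclic $u_1$-modules $u_1\grv$, $u_1\grv^{-1}$, $u_1\grv^{-2}$. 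The cleanest route: first show $U$ is generated as a \emph{left} $u_1$-module by the displayed spanning set, i.e. that right multiplication by $u_1$ on each summand does not leave the right-hand side of (i); then symmetrise. Concretely, $s_2^{-1}s_1^{-2}s_2^{-1}s_1^{m}$ must be pushed back: expanding $s_1^m$ via (\ref{one})/(\ref{two}) reduces to $m\in\{0,1,-1,-2\}$, and for $m=1$ one uses $s_2^{-1}s_1^{-2}s_2^{-1}s_1 = s_2^{-1}s_1^{-2}(s_2^{-1}s_1s_2)s_2^{-1}\cdot s_2 \cdot(\ldots)$ — more directly, $\grv^{-1}s_1 = s_1\grv^{-1}$ by centrality, and for $s_2s_1^2s_2$ one has $\grv s_1 = s_1\grv$ likewise, so these summands are genuinely cyclic $u_1$-modules, not just $u_1$-bimodules.

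For part (ii), I would show $H_4=U$ by proving $U$ is stable under right multiplication by $s_2$ and $s_2^{-1}$ (stability under $s_1^{\pm1}$ is built into the $u_1$-bimodule structure, and $1\in U$), since then $U$ is a sub-$(R_4,R_4)$-bimodule of $H_4$ containing $1$ and closed under multiplication by the generators, hence equals $H_4$. It suffices to check $U\cdot s_2\subseteq U$; closure under $s_2^{-1}$ then follows by applying the algebra automorphism $\grF$, which swaps $s_i\leftrightarrow s_i^{-1}$ and which, one checks, preserves $U$ (it permutes the summands, sending $u_1u_2u_1\to u_1u_2u_1$, $u_1s_2s_1^{-1}s_2u_1\leftrightarrow u_1s_2^{-1}s_1s_2^{-1}u_1$ up to the $u_1$-bimodule structure, $\grv\mapsto\grv^{-1}$, etc.), after noting $\grF(R_4)=R_4$. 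So the core computation is: for each summand $X$ of $U$ and each $u_1$-bimodule generator $g$ of $X$, show $g s_2\in U$. Writing a typical element of $u_1 g u_1$ as $p\, g\, q$ with $p,q\in u_1$, we need $g q s_2\in U$ for all $q\in u_1$; expanding $q$ as an $R_4$-combination of $1,s_1,s_1^{-1},s_1^2$ (using (\ref{one})) reduces to finitely many words, each of the form $s_2^{\gra}s_1^{\grb}s_2^{\grg}s_1^{\grd}s_2$ with small exponents. Using Proposition \ref{prop1}, Lemma \ref{lem2}, and the generalized braid relations, each such word is rewritten into $U$; the genuinely new phenomena are words that produce $s_2s_1^{\pm2}s_2$ or $s_2^{-2}s_1^{-2}s_2^{-2}$ patterns, which is exactly where the $\grv^{\pm1},\grv^{-2}$ summands are needed.

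I expect the main obstacle to be the bookkeeping in the right-multiplication-by-$s_2$ step of part (ii): unlike Proposition \ref{prop1}, where one controls length-three words $s_2^{\gra}s_1^{\grb}s_2^{\grg}$, here one is forced into length-five words, and after expanding inner $s_1$-powers there are many cases, several of which spawn subcases via further expansions of negative powers of $s_2$ through (\ref{two}). The delicate points are (a) confirming that the "overflow" words such as $s_2 s_1^2 s_2$, $s_2^{-2}s_1^{-2}s_2^{-2}$, and their $u_1$-translates really do land in $u_1\grv+u_1\grv^{-1}+u_1\grv^{-2}$ rather than generating something new — this is where centrality of $z$ and hence $\grv x=x\grv$ for $x\in u_1$ is essential — and (b) checking that $\grF$ maps the spanning set of (i) into itself so that the $s_2^{-1}$-closure comes for free rather than by a second round of computation. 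Everything else is a finite, if lengthy, verification of the type already carried out in Lemma \ref{lem2} and Proposition \ref{prop1}.
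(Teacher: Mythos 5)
Your overall architecture (use Proposition \ref{prop1} as the workhorse, show $U$ is a one-sided ideal containing $1$, reduce everything to words in $u_1u_2u_1u_2u_1$) matches the paper's, up to working with right ideals where the paper works with left ideals. But there is a genuine error at the heart of your part (i): the claimed identity $\grv^{-2}=s_2^{-2}s_1^{-2}s_2^{-2}$ is false. Already in the abelianization of $B_3$ the two sides have degrees $-8$ and $-6$ respectively; centrality of $z=s_1^2\grv$ gives you $\grv^{-2}=s_2^{-1}s_1^{-2}s_2^{-2}s_1^{-2}s_2^{-1}$ and lets $\grv^{\pm k}$ commute with $u_1$, but it does not collapse $s_2^{-2}s_1^{-2}s_2^{-2}$ onto $\grv^{-2}$. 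What is actually true, and what the paper spends the bulk of its proof of (i) establishing, is the much weaker statement $s_2^{-2}s_1^{-2}s_2^{-2}\in u_1^{\times}\grv^{-2}+U'$: this requires a nontrivial chain of generalized braid relations, two expansions of $s_1^{\pm2}$ with invertible leading coefficient, and several appeals to Lemma \ref{lem2} to absorb the error terms into $U'$. Your proposal treats this as an ``observation'' and therefore omits the single hardest computation in the theorem; without it, part (i) — and hence the spanning set count in Corollary \ref{G8} — does not follow.

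A secondary problem is your use of $\grF$ to get closure under $s_2^{-1}$ for free: $\grF$ sends $u_1\grv^{-2}$ to $u_1\grv^{2}$, which is not visibly contained in the right-hand side of (i), so $\grF(U)=U$ is not available before the theorem is proved (arguing it afterwards would be circular). This gap is harmless because it is unnecessary: since $s_2^{-1}=d^{-1}(s_2^{3}-as_2^{2}-bs_2-c)$, closure of $U$ under multiplication by $s_2$ already gives closure under all of $u_2$, which is exactly how the paper dispenses with the inverse generator. The remaining bookkeeping you describe for part (ii) — expanding the inner $u_1$ factor, rewriting length-five words via Lemma \ref{lem1} and the generalized braid relations, and landing in $u_1u_2u_1u_2u_1\subset U$ by Proposition \ref{prop1} — is faithful to the paper and would go through once (i) is genuinely established.
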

\begin{proof}\mbox{}  
	\vspace*{-\parsep}
	\vspace*{-\baselineskip}\\ 
	\begin{itemize}[leftmargin=0.6cm]
		\item [(i)]
		We recall that $\grv=s_2s_1^2s_2$.
		We must prove that the RHS, which is by definition $U'+u_1\grv
		+u_1\grv^{-2}$, is  equal to $U$. For this purpose we will ``replace''  inside the definition of $U$ the elements $s_2s_1^{-2}s_2$ and $s_2^{-2}s_1^{-2}s_2^{-2}$ with the elements $\grv$ and $\grv^{-2}$ modulo $U'$, by proving that $s_2s_1^{-2}s_2\in u_1^{\times}\grv +U'$ and $s_2^{-2}s_1^{-2}s_2^{-2}\in u_1^{\times}\grv^{-2} +U'$. 
		
		For the element $s_2s_1^{-2}s_2$, we expand $s_1^{-2}$ as a linear combination of $s_1^{-1}, 1, s_1, s_1^2$, where the coefficient of $s_1^2$ is invertible. The result then follows from  the definition of $U'$ and the braid relation.
		For the element $s_2^{-2}s_1^{-2}s_2^{-2}$ we apply lemma \ref{lem1} and the generalized braid relations and we have: 
		$s_2^{-2}s_1^{-2}s_2^{-2}=s_2^{-2}s_1^{-1}(s_1^{-1}s_2^{-2}s_1)s_1^{-1}=s_2^{-1}(s_2^{-1}s_1^{-1}s_2)s_1^{-1}(s_1^{-1}s_2^{-1}s_1)s_1^{-2}=s_2^{-1}s_1(s_2^{-1}s_1^{-2}s_2)s_1^{-1}s_2^{-1}s_1^{-2}\in s_2^{-1}s_1^2s_2^{-2}s_1^{-2}s_2^{-1}u_1$.
		We expand $s_1^2$ as a linear combination of $s_1$, 1, $s_1^{-1}$, $s_1^{-2}$, where the coefficient of $s_1^{-2}$ is invertible and by the generalized braid relations and the fact that $s_1^{-2}\grv^{-2}=\grv^{-2}s_1^{-2}=s_2^{-1}s_1^{-2}s_2^{-2}s_1^{-2}s_2^{-1}s_1^{-2}$ we have that $$s_2^{-2}s_1^{-2}s_2^{-2}
		\in s_2^{-1}s_1s_2^{-2}s_1^{-2}s_2^{-1}u_1+s_2^{-3}s_1^{-2}s_2^{-1}u_1+u_1s_2^{-1}s_1^{-3}s_2^{-1}u_1+
		u_1^{\times}\grv^{-2}.$$
		Therefore, by lemma \ref{lem2}(ii) it is enough to prove that the elements $s_2^{-1}s_1s_2^{-2}s_1^{-2}s_2^{-1}$ and $s_2^{-3}s_1^{-2}s_2^{-1}$ belong to $U'$. However, the latter is an element in $U'$,  if we expand $s_2^{-3}$ as a linear combination of $s_2^{-2}, s_2^{-1}, 1, s_2$ and use lemma \ref{lem2}(iii), the definition of $U'$ and lemma \ref{lem1}. Moreover,
		$s_2^{-1}s_1s_2^{-2}s_1^{-2}s_2^{-1}=s_2^{-2}(s_2s_1s_2^{-1})\grv^{-1}=
		s_2^{-2}s_1^{-1}s_2s_1\grv^{-1}=s_2^{-2}s_1^{-1}s_2\grv^{-1}s_1^{-1}=(s_2^{-2}s_1^{-4}s_2^{-1})s_1^{-1}
		\in U'$, by lemma \ref{lem2}(iii).
		
		\item [(ii)]
		Since $1\in U$, it will be sufficient to show that $U$ is a left ideal of $H_4$. We know that $U$ is a $u_1$-sub-bimodule of $H_4$. Therefore, we only need to prove that $s_2U\subset U$. Since $U$ is equal to the RHS of (i) we have that
		$$s_2U\subset s_2u_1u_2u_1+s_2u_1s_2s_1^{-1}s_2u_1+s_2u_1s_2^{-1}s_1s_2^{-1}u_1+ s_2u_1\grv+s_2u_1\grv^{-1}+s_2u_1\grv^{-2}.$$ However, $s_2u_1u_2u_1+s_2u_1\grv+s_2u_1\grv^{-1}+s_2u_1\grv^{-2}=s_2u_1u_2u_1+s_2\grv u_1+s_2\grv^{-1}u_1+s_2\grv^{-2}u_1=s_2u_1u_2u_1+s_2^3s_1^2s_2u_1+s_1^{-2}s_2^{-1}u_1+s_1^{-2}s_2^{-2}s_1^{-1}s_2^{-1}
		\subset u_1u_2u_1u_2u_1$. Furthermore,
		by using lemma \ref{lem1} we have that $s_2u_1s_2^{-1}=s_1^{-1}u_2s_1$. Hence, $
		s_2u_1s_2s_1^{-1}s_2u_1=(s_2u_1s_2^{-1})s_2^2s_1^{-1}s_2u_1=s_1^{-1}u_2(s_1s_2^2s_1^{-1})s_2u_1=
		s_1^{-1}u_2s_1^2s_2^2u_1\subset u_1u_2u_1u_2u_1.$ Moreover, by using \ref{lem1} again we have that $(s_2u_1s_2^{-1})s_1s_2^{-1}u_1=s_1^{-1}u_2s_1^2s_2^{-1}u_1\subset u_1u_2u_1u_2u_1$ . Therefore, $$ s_2u_1u_2u_1+s_2u_1s_2s_1^{-1}s_2u_1+s_2u_1s_2^{-1}s_1s_2^{-1}u_1+ s_2u_1\grv+s_2u_1\grv^{-1}+s_2u_1\grv^{-2}\subset u_1u_2u_1u_2u_1.$$
		The result follows directly from proposition \ref{prop1}.
		\qedhere
	\end{itemize}
\end{proof}
\begin{cor}$H_4$ is a free $R_4$-module of rank $r_4=96$.
	\label{G8}
\end{cor}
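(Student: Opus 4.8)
The plan is to deduce Corollary~\ref{G8} directly from Theorem~\ref{th} together with Proposition~\ref{rp}. By Theorem~\ref{th}(ii) we have $H_4=U$, and by Theorem~\ref{th}(i),
\[
H_4 = u_1u_2u_1+u_1s_2s_1^{-1}s_2u_1+u_1s_2^{-1}s_1s_2^{-1}u_1+u_1\grv+u_1\grv^{-1}+u_1\grv^{-2}.
\]
The first step is to bound the number of $R_4$-generators coming from this expression. The algebra $u_1$ is generated as an $R_4$-module by $1,s_1,s_1^2,s_1^3$ by the quartic relation~(\ref{one}), hence by $4$ elements. Each of the three ``two-sided'' pieces $u_1\,x\,u_1$ (for $x\in\{1\cdot s_2\cdot 1,\ s_2s_1^{-1}s_2,\ s_2^{-1}s_1s_2^{-1}\}$; note $u_1u_2u_1$ itself splits as $u_1+u_1s_2u_1+u_1s_2^2u_1+u_1s_2^3u_1$ using the quartic relation on $s_2$, so it is spanned by $u_1\,s_2^j\,u_1$ for $j=0,1,2,3$, i.e.\ effectively several $u_1$-bimodule generators) contributes at most $4\times 4=16$ elements, while each of the three ``one-sided'' pieces $u_1\grv^{m}$ contributes at most $4$ elements. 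Adding these up gives a finite $R_4$-spanning set; the second step is simply to check that this count does not exceed $r_4=|G_{16}|=96$.

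Carrying out the bookkeeping: $u_1u_2u_1$ is spanned by the $16$ monomials $s_1^i s_2^j s_1^l$ with $0\le i,j,l\le 3$ and $j\ge 1$ does not even all survive, but a crude bound is $u_1u_2u_1 = u_1 + u_1 s_2 u_1 + u_1 s_2^2 u_1 + u_1 s_2^3 u_1$, spanned by $4 + 3\cdot 16 = $ at most $52$ elements; adding $u_1s_2s_1^{-1}s_2u_1$ and $u_1s_2^{-1}s_1s_2^{-1}u_1$ gives at most $2\cdot 16$ more, and $u_1\grv+u_1\grv^{-1}+u_1\grv^{-2}$ gives at most $3\cdot 4=12$ more. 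This overcounts badly, so the real content of the step is to organize the spanning set as a genuine left $u_1$-module decomposition: from the shape of the RHS, $H_4$ is spanned as a \emph{left} $u_1$-module by the $R_4$-module generated by $\{1, s_2, s_2^2, s_2^3\}\cdot u_1$ (eight cosets once one also right-multiplies, but $u_1$-on-the-right again contributes a factor $4$) together with the eight extra elements $s_2s_1^{-1}s_2s_1^l$, $s_2^{-1}s_1s_2^{-1}s_1^l$ and $\grv^m$. Concretely one shows $H_4$ is spanned as a left $u_1$-module by $24$ elements, and since $u_1$ is free of rank $4$ over $R_4$, $H_4$ is $R_4$-spanned by $96$ elements.

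The final step invokes Proposition~\ref{rp} with $k=4$: once $H_4$ is generated as an $R_4$-module by $r_4=96$ elements, it is automatically free of rank $96$. Thus it only remains to record that $|W_4|=|G_{16}|=96$, which is Coxeter's count of the finite group $B_3/\langle s_i^4\rangle$ (equivalently the order of the Shephard--Todd group $G_{16}$, namely $(2\cdot 60)\cdot?$ — in fact $|G_{16}|=1200/\ ?$; the correct value $96$ is the one fixed in Section~\ref{s} as $r_4$). The main obstacle is purely combinatorial: producing an \emph{honest} list of $96$ (not merely ``finitely many'') $R_4$-spanning elements from the six summands of Theorem~\ref{th}(i), which requires carefully tracking how right multiplication by $u_1$ and the quartic relation collapse the naive monomial basis; there is no further conceptual difficulty, since freeness is then immediate from Proposition~\ref{rp}.
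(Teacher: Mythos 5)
Your proof follows essentially the same route as the paper: use Theorem~\ref{th} to present $H_4$ as a left $u_1$-module on $24$ generators, multiply by the $4$ generators of $u_1$ over $R_4$ to get $96$, and invoke Proposition~\ref{rp}; the hand-wringing about overcounting is unnecessary, since even your ``crude'' tally $52+32+12$ already equals exactly $96$. One correction: $W_4$ is the Shephard--Todd group $G_8$ (of order $96$), not $G_{16}$ (which is $W_5$, of order $600$) --- though the numerical value $r_4=96$ you actually use is the correct one.
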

\begin{proof}By proposition \ref{rp} it will be sufficient to show that $H_4$ is generated as $R_4$-module by $r_4=96$ elements. By theorem  \ref{th} and the fact that $u_1u_2u_1=u_1(R_4+R_4s_2+R_4s_2^{-1}+R_4s_2^2)u_1=u_1+u_1s_2u_1+u_1s_2^{-1}u_1+u_1s_2^2u_1$ we have that $H_4$ 
	is generated as left $u_1$-module by 24 elements. Since $u_1$ is generated by 4 elements as a $R_4$-module, we have that $H_4$ is generated over $R_4$ by 96 elements.
\end{proof}

\section{The quintic Hecke algebra $H_5$}
\indent

Our ring of definition is $R_5=\ZZ[a, b,c, d,e,e^{-1}]$ and therefore, relation (\ref{one}) becomes $s_i^5=as_i^4+bs_i^3+cs_i^2+ds_i+e$, for $i=1,2$.
We recall that $\grv=s_2s_1^2s_2$ and we set 
$$\small{\begin{array}{lcl}U'&=&u_1u_2u_1+u_1\grv+u_1\grv^{-1}+
u_1s_2^{-1}s_1^2s_2^{-1}u_1+u_1s_2s_1^{-2}s_2u_1+u_1s_2^2s_1^2s_2^{2}u_1+
u_1s_2^{-2}s_1^{-2}s_2^{-2}u_1+\\&&+u_1s_2s_1^{-2}s_2^{2}u_1+
u_1s_2^{-1}s_1^2s_2^{-2}u_1+u_1s_2^{-1}s_1s_2^{-1}u_1
+u_1s_2s_1^{-1}s_2u_1+u_1s_2^{-2}s_1^{-2}s_2^{2}u_1+
u_1s_2^{2}s_1^2s_2^{-2}u_1+\\&&+u_1s_2^{2}s_1^{-2}s_2^{2}u_1+
u_1s_2^{-2}s_1^2s_2^{-2}u_1+u_1s_2^{-2}s_1s_2^{-1}u_1+u_1s_2^{-1}s_1s_2^{-2}u_1\\ \\
U''&=&U'+u_1\grv^2+u_1\grv^{-2}+
u_1s_2^{-2}s_1^2s_2^{-1}s_1s_2^{-1}u_1+u_1s_2^{2}s_1^{-2}s_2s_1^{-1}s_2u_1+
u_1s_2s_1^{-2}s_2^{2}s_1^{-2}s_2^{2}u_1+\\&&+u_1s_2^{-1}s_1^2s_2^{-2}s_1^2s_2^{-2}u_1\\ \\
U'''&=&U''+u_1\grv^3+u_1\grv^{-3}\\ \\
U''''&=&U'''+u_1\grv^4+u_1\grv^{-4}\\ \\
U&=&U''''+u_1\grv^5+u_1\grv^{-5}.
\end{array}}$$

It is obvious that $U$ is a $u_1$-bi-module and that $U', U'', U'''$ and $U''''$ are $u_1-$ sub-bi-modules of $U$. Again, our goal is to 
prove that $H_5=U$ (theorem \ref{thh2}). As we explained in the proof of theorem \ref{th}, since $1\in U$ and $s_1U\subset U$ (by the definition of $U$), it is enough to prove that $s_2U\subset U$.
We notice that $$U=\sum_{k=1}^5u_1\grv^{\pm k}+\underbrace{u_1u_2u_1+u_1\text{``some elements of length 3''}u_1}_{\in U'}+\underbrace{u_1\text{``some elements of length 5''}u_1}_{\in U''}.$$ 
By the definition of $U'$ and $U''$ we have that $u_1\grv^{\pm1}\subset U'$ and $u_1\grv^{\pm2}\subset U''$. Therefore, in order to prove that $s_2U\subset U$ we only need to prove that $s_2u_1\grv^{\pm k}$ ($k=3,4,5$), $s_2U'$ and $s_2U''$ are subsets of $U$. 

The rest of this section is devoted to this proof (see proposition \ref{p2}, lemma \ref{oo}$(ii)$, proposition \ref{xx}$(i),(ii)$ and theorem \ref{thh2}). The reason we define also $U'''$ and $U''''$ is because, in order to prove that $s_2u_1\grv^k$ and $s_2u_1\grv^{-k}$ ($k=3,4,5$) are subsets of $U$, we want to ``replace'' inside the definition of $U$ the elements $\grv^k$ and $\grv^{-k}$ by some other elements modulo  $U'', U'''$ and $U''''$, respectively (see lemmas \ref{cc}, \ref{ll} and \ref{lol}).

Recalling that $\grF$ is the automorphism of $H_5$ as defined in section \ref{s}, we have the following lemma:
\begin{lem}The $u_1$-bi-modules $U', U'', U''', U''''$ and $U$ are stable under $\grF$.
	\label{r1}
\end{lem}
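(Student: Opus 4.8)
The plan is to exploit the explicit description of $\grF$ given in section \ref{s}: it fixes $u_1$ (since $\grF(s_1) = s_1^{-1}$ and $u_1$ is generated by $s_1$, equivalently by $s_1^{-1}$), it sends $s_2 \mapsto s_2^{-1}$, and it sends $\grv = s_2 s_1^2 s_2$ to $s_2^{-1} s_1^{-2} s_2^{-1} = \grv^{-1}$. Because $\grF$ is a $\ZZ$-algebra automorphism fixing $u_1$, it maps a $u_1$-bi-module of the form $u_1 X u_1$ to $u_1 \grF(X) u_1$, and a term $u_1 \grv^k$ to $u_1 \grv^{-k}$. So the whole task reduces to checking that, for each generating term of $U', U'', U''', U''''$ and $U$, the image under $\grF$ lies again in the same module. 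Since $\grF$ is an involution on the relevant set of symmetric generators (as we will see), it suffices in fact to produce, for each generator, an expression for its $\grF$-image as an element of the module.

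First I would record the action of $\grF$ on the basic building blocks: $\grF(s_i^{\pm 1}) = s_i^{\mp 1}$, hence $\grF(u_i) = u_i$ and $\grF(u_1 u_2 u_1) = u_1 u_2 u_1$; and $\grF(\grv^{\pm k}) = \grv^{\mp k}$ for all $k$. This already handles the terms $u_1 u_2 u_1$, all the $u_1 \grv^{\pm k}u_1$ (note $u_1 \grv^{\pm k} = u_1 \grv^{\pm k} u_1$ since $\grv^{\pm k}$ commutes with $u_1$ up to... actually $\grv$ does not commute with $u_1$, only $z = s_1^2\grv$ does — but $u_1\grv^{\pm k}$ is still a well-defined left-$u_1$-module summand and $\grF(u_1\grv^{\pm k}) = u_1\grv^{\mp k}$, which is the matching summand). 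Then I would go through the length-$3$ generators of $U'$: applying $\grF$ to $s_2 s_1^{-1} s_2$ gives $s_2^{-1} s_1 s_2^{-1}$, to $s_2^{-1} s_1^2 s_2^{-1}$ gives $s_2 s_1^{-2} s_2$, to $s_2 s_1^{-2} s_2^2$ gives $s_2^{-1} s_1^2 s_2^{-2}$, to $s_2^{-1} s_1 s_2^{-2}$ gives $s_2 s_1^{-1} s_2^2$ — and here one must check $s_2 s_1^{-1}s_2^2 \in U'$, which by lemma \ref{lem1} or the generalized braid relations rewrites inside $u_1 u_2 u_1$-type terms or a listed generator. One runs through the list: each of the $17$ generators of $U'$ has an $\grF$-partner also in the list (the list was evidently built to be $\grF$-symmetric, swapping each $s_2^{a}s_1^{b}s_2^{c}$ with $s_2^{-a}s_1^{-b}s_2^{-c}$, up to the generalized braid relations of lemma \ref{lem1}), so $\grF(U') \subseteq U'$.

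Next, for $U''$ one checks the four new length-$5$ generators: $\grF(s_2^{-2} s_1^2 s_2^{-1} s_1 s_2^{-1}) = s_2^2 s_1^{-2} s_2 s_1^{-1} s_2$, $\grF(s_2 s_1^{-2} s_2^2 s_1^{-2} s_2^2) = s_2^{-1} s_1^2 s_2^{-2} s_1^2 s_2^{-2}$, and these come in $\grF$-swapped pairs among the four, so together with $\grF(U') \subseteq U'$ and $\grF(u_1\grv^{\pm 2}) = u_1\grv^{\mp 2}$ we get $\grF(U'') \subseteq U''$. Then $\grF(U''') \subseteq U'''$ follows from $\grF(U'') \subseteq U''$ together with $\grF(u_1 \grv^{\pm 3}) = u_1 \grv^{\mp 3}$; likewise $\grF(U'''') \subseteq U''''$ and $\grF(U) \subseteq U$. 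Finally, since $\grF$ is an involution (one checks $\grF^2 = \mathrm{id}$ directly on generators $s_i$ and on the scalars $a_j$ from its definition in section \ref{s}), each inclusion $\grF(V) \subseteq V$ is in fact an equality, so each of $U', U'', U''', U'''', U$ is stable under $\grF$.

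The main obstacle is purely bookkeeping: verifying that the $\grF$-image of each length-$3$ and length-$5$ generator genuinely lies back in the list requires, for a handful of them, a short rewriting using lemma \ref{lem1} and the generalized braid relations (as in the $s_2 s_1^{-1} s_2^2$ example above) rather than being literally one of the listed words. There is no conceptual difficulty — the generating sets were designed with this symmetry in mind — but one should tabulate the pairing explicitly to be sure no generator is orphaned. I expect this to be a short, mechanical verification.
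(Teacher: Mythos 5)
Your proposal is correct and follows essentially the same route as the paper: one observes that the generating sets are $\grF$-symmetric except for the two generators $u_1s_2^{-2}s_1s_2^{-1}u_1$ and $u_1s_2^{-1}s_1s_2^{-2}u_1$, whose images $s_2^2s_1^{-1}s_2$ and $s_2s_1^{-1}s_2^2$ the paper likewise handles by a short explicit rewriting (expanding $s_2^2$ as a linear combination of $s_2,1,s_2^{-1},s_2^{-2},s_2^{-3}$ via the quintic relation and then applying Lemma \ref{lem1}), exactly the step you flag as the remaining bookkeeping. One small aside: $\grv$ does in fact commute with all of $u_1$, since $\grv=s_1^{-2}z$ with $z$ central and $s_1^{-2}\in u_1$, so the parenthetical worry in your second paragraph is unnecessary.
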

\begin{proof}We notice that $U',U'', U''', U''''$ and $U$ are of the form $$u_1s_2^{-2}s_1s_2^{-1}u_1+u_1s_2^{-1}s_1s_2^{-2}u_1+\sum u_1\grs u_1+\sum u_1\grs^{-1}u_1,$$ for some $\grs\in B_3$ satisfying $\grs^{-1}=\grF(\grs)$ and $\grs=\grF(\grs^{-1})$. Therefore, 
	we restrict ourselves to proving that  the elements $\grF(s_2^{-2}s_1s_2^{-1})=s_2^2s_1^{-1}s_2$ and $\grF(s_2^{-1}s_1s_2^{-2})=s_2s_1^{-1}s_2^2$ belong to $U'$. We expand $s_2^2$ as a linear combination of $s_2,1, s_2^{-1}, s_2^{-2}$ and $s_2^{-3}$ and by the definition of $U'$ and lemma \ref{lem1} we have to prove that the elements $s_2^{k}s_1^{-1}s_2$ and $s_2s_1^{-1}s_2^{k}$  are elements in $U'$, for $k=-3, -2$. Indeed, by using lemma \ref{lem1} we have: $s_2^{k}s_1^{-1}s_2=s_2^{k+1}(s_2^{-1}s_1^{-1}s_2)=(s_2^{k+1}s_1s_2^{-1})s_1^{-1}\in U'$ and  $s_2s_1^{-1}s_2^{k}=(s_2s_1^{-1}s_2^{-1})s_2^{k+1}=s_1^{-1}(s_2^{-1}s_1s_2^{k+1})\in U'$.
\end{proof}
From now on, we will use lemma \ref{lem1} without mentioning it.
\begin{prop} $u_2u_1u_2\subset U'.$
	\label{p1}
\end{prop}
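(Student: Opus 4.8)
The plan is to follow the same strategy as in the proof of Proposition \ref{prop1} for the quartic case, only now the ``reservoir'' $U'$ is considerably larger. I must show that every element $s_2^{\alpha}s_1^{\beta}s_2^{\gamma}$ lies in $U'$, where, by repeatedly applying the defining relations (\ref{one}) and (\ref{two}) to each of the three exponents, I may assume $\alpha,\beta,\gamma \in \{-2,-1,0,1,2\}$. When $\alpha\beta\gamma = 0$ the element collapses to something of length at most $2$, hence lies in $u_1u_2u_1 \subset U'$, so I only need the cases $\alpha,\beta,\gamma \in \{-2,-1,1,2\}$. Moreover, since $U'$ is stable under $\grF$ (this is the content of Lemma \ref{r1}, applied to $U'$) and $\grF(s_2^{\alpha}s_1^{\beta}s_2^{\gamma}) = s_2^{-\alpha}s_1^{-\beta}s_2^{-\gamma}$ up to a change of scalars that does not affect membership, I can cut the case analysis roughly in half by treating each element simultaneously with its ``inverse-exponent'' partner.

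The bookkeeping then proceeds by cases on $(\alpha,\gamma)$. The cases with $\alpha = \gamma$ (namely $(\pm1,\pm1)$ and $(\pm2,\pm2)$) are immediate from the defining summands $u_1s_2^{\pm1}s_1^{\beta}s_2^{\pm1}u_1$ and $u_1s_2^{\pm2}s_1^{\beta}s_2^{\pm2}u_1$ that already appear in $U'$ for every relevant $\beta$ — I should simply check against the explicit list, noting that $s_2 s_1^2 s_2 = \grv$ and $s_2^{-1}s_1^{-2}s_2^{-1} = \grv^{-1}$ are there too. For the mixed cases ($\alpha \neq \gamma$, e.g. $(1,-1)$, $(1,2)$, $(1,-2)$, $(2,-1)$, $(2,-2)$, $(-1,-2)$ and their $\grF$-partners) I repeatedly use Lemma \ref{lem1} in the forms $s_2 s_1^m s_2^{-1} = s_1^{-1}s_2^m s_1$ and $s_2^{-1}s_1^m s_2 = s_1 s_2^m s_1^{-1}$ together with the generalized braid relations to migrate one $s_2$-block past an $s_1$-block, reducing the $s_2$-exponents in absolute value or turning the word into one of the listed length-$3$ or length-$5$ generators of $U'$. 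When an exponent with absolute value $2$ resists this, I expand it as a linear combination of $s_2^{-1},1,s_2,s_2^2,\dots$ (or the analogous $s_1$-expansion) via (\ref{one})/(\ref{two}); this always reduces to previously handled cases since the ``new'' words produced have strictly smaller total exponent weight on the block being expanded. A typical computation: $s_2^2 s_1 s_2^{-1} = s_2(s_2 s_1 s_2^{-1}) = s_2 s_1^{-1} s_2 \cdot s_1 = (s_2 s_1^{-1} s_2)s_1 \in u_1 s_2 s_1^{-1} s_2 u_1 \subset U'$, and the trickier ones like $s_2^2 s_1^{-2} s_2^{-2}$ or $s_2^{-2}s_1^2 s_2^2$ will unwind, after one application of Lemma \ref{lem1} and one exponent-expansion, into the listed summands $u_1 s_2^2 s_1^{-2} s_2^2 u_1$, $u_1 s_2^{-2}s_1^2 s_2^{-2}u_1$, etc.

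The main obstacle, as usual in these arguments, is purely organizational: there is no single hard idea, but the list of generators of $U'$ is long ($17$ summands) and the bound $|\beta|\le 2$ on the middle exponent means each $(\alpha,\gamma)$ case splits further into several $\beta$-subcases, some of which require chaining two or three applications of Lemma \ref{lem1}/the generalized braid relations before landing inside an explicitly listed term. The delicate point is to verify that whenever I expand an exponent of absolute value $2$, every word that appears in the resulting linear combination has already been shown to lie in $U'$ — i.e. to order the case analysis so there is no circularity. I would arrange the cases by the quantity $|\alpha| + |\gamma|$, disposing of $|\alpha|+|\gamma| \le 2$ first (trivial), then $= 3$, then $= 4$, so that every exponent-reduction step appeals only to a strictly earlier case; the $\grF$-symmetry from Lemma \ref{r1} then halves the remaining work at each level.
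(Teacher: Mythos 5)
Your overall strategy coincides with the paper's: reduce to $\gra,\grb,\grg\in\{-2,-1,1,2\}$, halve the case analysis via the $\grF$-stability of $U'$ (Lemma \ref{r1}), and dispose of each remaining word by Lemma \ref{lem1}, the generalized braid relations, and expansions of squares/cubes into lower powers. The paper organizes the cases by $\gra$ first (so that every $\gra=2$ case may quote an already-established $\gra=1$ case), whereas you order by $|\gra|+|\grg|$; either bookkeeping can be made to work, though yours still leaves within-level dependencies (e.g.\ $s_2^2s_1^{-2}s_2$ reduces in the paper to words of the form $s_2s_1^{\grb'}s_2^{-2}$, which sit at the same level $|\gra|+|\grg|=3$), so the circularity issue you flag is not actually resolved by your proposed ordering alone.

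There is, however, one concretely false claim: you assert that the diagonal cases $\gra=\grg$ are ``immediate from the defining summands $u_1s_2^{\pm2}s_1^{\grb}s_2^{\pm2}u_1$ that already appear in $U'$ for every relevant $\grb$.'' They do not: $U'$ contains $u_1s_2^{2}s_1^{\pm2}s_2^{2}u_1$ and $u_1s_2^{-2}s_1^{\pm2}s_2^{-2}u_1$ only for middle exponent $\pm2$, not $\pm1$. Of the missing words, $s_2^2s_1s_2^2=s_1\grv$ is indeed quick, but $s_2^2s_1^{-1}s_2^2$ (equivalently, its $\grF$-image $s_2^{-2}s_1s_2^{-2}$) is one of the genuinely laborious cases: the paper must write it as $s_1^{-1}\grF(s_2s_1^{-2}s_2^{-3})$, expand $s_2^{-3}$, and appeal to the previously established $\gra=1$ cases. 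Similarly, the hardest mixed case $s_2^2s_1^{-2}s_2^{-2}$ is handled in the paper not by ``one application of Lemma \ref{lem1} and one exponent-expansion'' but by exploiting the commutation $x\grv^m=\grv^m x$ for $x\in u_1$ to land in the summand $u_1s_2^{-2}s_1^{-2}s_2^{2}u_1$; your generic recipe does not obviously produce this. So the skeleton of your argument is the right one, but the cases you dismiss as trivial include precisely the ones that carry the real content of the proposition, and as written the proof has a gap there.
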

\begin{proof}
	We have to prove that every element of the form $s_2^{\gra}s_1^{\grb}s_2^{\grg}$ belongs to $U'$, for $\gra,\grb,\grg\in\{-2,-1,0,1,2\}$. However, when $\gra\grb\grg=0$ the result is obvious. Therefore, we can assume that $\gra,\grb,\grg\in \{-2,-1,1,2\}.$ We continue the proof as in the proof of proposition \ref{prop1}, which is by distinguishing cases for $\gra$. However, by using lemma \ref{r1} we can assume that $\gra\in\{1,2\}$. We have:
	\begin{itemize}[leftmargin=*] 
		\item\underline{$\gra=1$}: 
		\begin{itemize}[leftmargin=*] 
			\item \underline{$\grg\in\{-1,1\}$}: The result follows from lemma \ref{lem1}, the braid relation and the definition of  $U'$.
			\item \underline{$\grg=-2$}:  $s_2s_1^{\grb}s_2^{-2}=(s_2s_1^{\grb}s_2^{-1})s_2^{-1}=s_1^{-1}(s_2^{\grb}s_1s_2^{-1})$. For $\grb\in\{1,-1,-2\}$ the result follows from lemma \ref{lem1} and the definition of $U'$. For $\grb=2$, we have $s_1^{-1}s_2^2s_1s_2^{-1}=s_1^{-1}s_2(s_2s_1s_2^{-1})=s_1^{-1}(s_2s_1^{-1}s_2)s_1\in U'$.
			\item \underline{$\grg=2$}:
			We need to prove that the element $s_2s_1^{\grb}s_2^{2}$ is inside $U'$. For $\grb\in\{-2,1\}$ the result is obvious by using the definition of $U'$ and the generalized braid relations. For $\grb=-1$ we have $s_2s_1^{-1}s_2^2=\grF(s_2^{-1}s_1s_2^{-2})\in\grF(U')\stackrel{\ref{r1}}{=}U'$. For $\grb=2$ we have $s_2s_1^2s_2^2= s_1^{-1}(s_1s_2s_1^2)s_2^2=s_1^{-1}s_2(s_2s_1s_2^3)=s_1^{-1}(s_2s_1^3s_2)s_1$. The result then follows from the case where $\grg=1$, if we expand $s_1^3$ as a linear combination of $s_1^2, s_1, 1, s_1^{-1}, s_1^{-2}$.
		\end{itemize}
		\item \underline {$\gra=2$}:
		\begin{itemize}[leftmargin=*] 
			\item \underline{$\grg=-1$}:  $s_2^2s_1^{\grb}s_2^{-1}=s_2(s_2s_1^{\grb}s_2^{-1})=(s_2s_1^{-1}s_2^{\grb})s_1\in U'$ (case where $\gra=1$).
			\item \underline{$\grg=2$}: We only have to prove the cases where $\grb\in\{-1,1\}$, since the cases where $\grb\in \nolinebreak \{2,-2\}$ follow from the definition of $U'$. We have $s_2^2s_1s_2^2=(s_2^2s_1s_2)s_2=s_1\grv\in U'$. Moreover,
			$s_2^2s_1^{-1}s_2^2=s_1^{-1}(s_1s_2^2s_1^{-1})s_2^2=s_1^{-1}\grF(s_2s_1^{-2}s_2^{-3})$. The result follows from the case where $\gra=1$ and lemma
			\ref{r1}, if we expand $s_2^{-3}$ as  a linear combination of $s_2^{-2}, s_2^{-1}, 1, s_2, s_2^{2}$.
			\item \underline{$\grg=1$}: We  have to check the cases where $\grb\in\{-2,-1,2\}$, since the case where $\grb=1$ is a direct result from the generalized braid relations. However,   $s_2^2s_1^{-1}s_2=\grF(s_2^{-2}s_1s_2^{-1})\in\grF(U')\stackrel{\ref{r1}}{=}U'$. Hence, it remains to prove the cases where $\grb\in \{-2,2\}$. 
			We have $s_2^2s_1^{-2}s_2=s_2^3(s_2^{-1}s_1^{-2}s_2)=s_1(s_1^{-1}s_2^3s_1)s_2^{-2}s_1^{-1}=
			s_1(s_2s_1^3s_2^{-3})s_1^{-1}$. The latter is an element in $U'$, if we expand $s_1^3$ and $s_2^{-3}$ as  linear combinations of $s_1^2, s_1, 1, s_1^{-1}, s_1^{-2}$ and $s_2^{-2}, s_2^{-1}, 1, s_2, s_2^{2}$, respectively and use the case where $\gra=1$. Moreover, 
			$s_2^2s_1^2s_2=
			s_2^2s_1(s_1s_2s_1)s_1^{-1}=(s_2^2s_1s_2)s_1s_2s_1^{-1}=s_1(s_2s_1^3s_2)s_1.$ The result follows again from the case where $\gra=1$, if we expand $s_1^3$ as a linear combination of $s_1^2, s_1, 1, s_1^{-1}, s_1^{-2}$. 
			
			\item\underline{$\grg=-2$}: We need to prove that $s_2^2s_1^{\grb}s_2^{-2}\in U'$. For $\grb=2$ the result follows from the definition of $U'$. For $\grb\in\{1,-1\}$ we have: $s_2^2s_1s_2^{-2}=s_2^2(s_1s_2^{-2}s_1^{-1})s_1=(s_2s_1^{-2}s_2)s_1\in U'$.
			$s_2^2s_1^{-1}s_2^{-2}=s_2(s_2s_1^{-1}s_2^{-1})s_2^{-1}=(s_2s_1^{-1}s_2^{-1})s_1s_2^{-1}=
			s_1^{-1}(s_2^{-1}s_1^2s_2^{-1})\in U'$. 
			It remains to prove the case where $\grb=-2$. We recall that $\grv =s_2s_1^2s_2$ and we have:
			$ s_2^2s_1^{-2}s_2^{-2}=s_1^{-1}(s_1s_2^2s_1^{-1})s_1^{-1}s_2^{-2}=s_1^{-1}s_2^{-2}\grv s_1^{-1}s_2^{-2}=s_1^{-1}s_2^{-2} s_1^{-1}\grv s_2^{-2}=
			s_1^{-1}s_2^{-2}s_1^{-1}(s_2s_1^2s_2^{-1})= s_1^{-1}(s_2^{-2}s_1^{-2}s_2^2)s_1. $ The result follows from the definition of $U'$.
			\qedhere
		\end{itemize}
	\end{itemize}
\end{proof}
From now on, in order to make it easier for the reader to follow the calculations, we will underline the elements belonging to $u_1u_2u_1u_2u_1$ and we will use immediately the fact that these elements belong to $U'$ (see proposition \ref{p1}).
\begin{lem}
	\mbox{}  
	\vspace*{-\parsep}
	\vspace*{-\baselineskip}\\ 
	\begin{itemize}[leftmargin=0.6cm]
		\item [(i)]$s _2u_1s_2u_1s_2u_1\subset\grv^2u_1+u_1u_2u_1u_2u_1\subset U''$.  
		\item [(ii)]$s_2\grv^2u_1=s_1s_2s_1^4s_2s_1^3s_2u_1\subset U''.$
	\end{itemize}  
	\label{oo}
\end{lem}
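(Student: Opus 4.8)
The plan is to establish (i) first and then read off (ii) from it. For (i), using the quintic relation (\ref{one}) and its companion (\ref{two}) to bring every exponent into $\{-2,-1,0,1,2\}$, it is enough to show that each monomial $s_2s_1^{\gra}s_2s_1^{\grb}s_2s_1^{\grg}$ with $\gra,\grb,\grg\in\{-2,-1,0,1,2\}$ lies in $\grv^2u_1+u_1u_2u_1u_2u_1$. Concretely I would first peel off the middle factor and show
$$u_1s_2u_1s_2u_1\subseteq u_1u_2u_1+u_1\grv+u_1s_2s_1^{-1}s_2u_1+u_1s_2s_1^{-2}s_2u_1,$$
which follows by expanding $s_1^{\grb}$: the case $\grb=1$ collapses by the braid relation $s_2s_1s_2=s_1s_2s_1$, the case $\grb=0$ gives $s_2^2$, the case $\grb=2$ gives $\grv$, which commutes with $u_1$, and $\grb=-1,-2$ give the two displayed generators. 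Then I would left-multiply by $s_2$ and dispatch the four pieces.

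The pieces $s_2u_1u_2u_1$ and $s_2u_1\grv=s_2\grv u_1=s_2^2s_1^2s_2u_1$ sit inside $u_2u_1u_2u_1\subseteq u_1u_2u_1u_2u_1$ at once. The substance is in $s_2u_1s_2s_1^{-1}s_2u_1$ and $s_2u_1s_2s_1^{-2}s_2u_1$, i.e.\ in placing $s_2s_1^{\gra}s_2s_1^{-1}s_2$ and $s_2s_1^{\gra}s_2s_1^{-2}s_2$ (up to a trailing factor from $u_1$) for $\gra\in\{-2,-1,0,1,2\}$; here I would use lemma \ref{lem1} in the form $s_2s_1^ms_2^{-1}=s_1^{-1}s_2^ms_1$ together with the generalized braid relations to drag $s_1$-powers through $s_2$-powers. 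In the $s_1^{-1}$-family every $\gra$ collapses into $u_1u_2u_1u_2u_1$; in the $s_1^{-2}$-family all $\gra$ but $\gra=-2$ collapse, and $\gra=-2$, i.e.\ the maximally coiled $s_2s_1^{-2}s_2s_1^{-2}s_2$, is the lone case where the $\grv^2u_1$ summand is needed: here one invokes that $z=s_1^2\grv$ is central (so $\grv=s_1^{-2}z$ and $\grv^2=s_1^{-4}z^2$) and, after expanding one surviving $s_1$-power by (\ref{one}), rewrites the element as a member of $\grv^2u_1+u_1u_2u_1u_2u_1$. Finally the inclusion $\grv^2u_1+u_1u_2u_1u_2u_1\subseteq U''$ is formal: $u_2u_1u_2\subseteq U'$ by proposition \ref{p1} and $U'$ is a $u_1$-bimodule, so $u_1u_2u_1u_2u_1=u_1(u_2u_1u_2)u_1\subseteq U'\subseteq U''$; and $\grv^2u_1=u_1\grv^2\subseteq U''$ by definition of $U''$.

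For (ii) the first step is the braid identity $s_2\grv^2=s_1s_2s_1^4s_2s_1^3s_2\cdot s_1^{-2}$, a short computation with $z=s_1^2\grv$ central: $s_2\grv^2=s_2(s_1^{-2}z)^2=s_2s_1^{-4}z^2$, while the same substitution converts $s_1s_2s_1^4s_2s_1^3s_2$ into $s_1s_2s_1^3s_2s_1^{-1}z$, and the two expressions agree after multiplying by $s_1^{-2}$. Since $s_1^{-2}$ is a unit of $u_1$, this gives the asserted equality of right ideals $s_2\grv^2u_1=s_1s_2s_1^4s_2s_1^3s_2u_1$. Now $s_2s_1^4s_2s_1^3s_2\in s_2u_1s_2u_1s_2\subseteq s_2u_1s_2u_1s_2u_1$, so $s_1s_2s_1^4s_2s_1^3s_2\in s_1\cdot(s_2u_1s_2u_1s_2u_1)$, and applying (i) — together with the fact that $s_1$ is a unit commuting with $\grv$, whence $s_1\grv^2u_1=\grv^2u_1$ and $s_1u_1u_2u_1u_2u_1=u_1u_2u_1u_2u_1$ — yields $s_2\grv^2u_1\subseteq\grv^2u_1+u_1u_2u_1u_2u_1\subseteq U''$.

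The main obstacle is the bookkeeping in (i) for the negatively coiled monomials, and above all the isolated element $s_2s_1^{-2}s_2s_1^{-2}s_2$: unlike the other cases it does not simplify merely by transporting factors to one side, so one must genuinely extract its $\grv^2$-component and verify that the remainder really does land in $u_1u_2u_1u_2u_1$. Everything else is routine once lemma \ref{lem1}, the generalized braid relations, the centrality of $z$, and proposition \ref{p1} are in hand.
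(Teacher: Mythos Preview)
Your strategy for (i) is sound and your argument for (ii) coincides with the paper's, but the route you take in (i) differs from the paper's in one consequential way: you expand $u_1$ via the basis $\{s_1^{-2},s_1^{-1},1,s_1,s_1^2\}$, whereas the paper uses $\{1,s_1,s_1^{-1},s_1^2,s_1^3\}$. With the paper's basis the only term not absorbed by $u_1u_2u_1u_2u_1$ is $s_2s_1^3s_2s_1^3s_2$, and this equals $\grv^2s_1$ \emph{already in $B_3$}: indeed $s_2s_1^2(s_1s_2s_1^3)s_2=s_2s_1^2s_2^2(s_2s_1s_2^2)$, and since $s_2s_1s_2^2=s_1^2s_2s_1$ this is $(s_2s_1^2s_2)(s_2s_1^2s_2)s_1=\grv^2s_1$. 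So no Hecke relation is needed to produce the $\grv^2u_1$ summand. In your basis the isolated element is $s_2s_1^{-2}s_2s_1^{-2}s_2$, and your sketch (``invoke centrality of $z$ \dots\ expand one surviving $s_1$-power'') is correct in spirit but vague in execution: the centrality of $z$ alone only gives $s_2s_1^{-2}s_2s_1^{-2}s_2=z^{-2}s_2^2s_1^2s_2^3s_1^2s_2^2$, which is no closer to the goal, and a genuine quintic expansion of one $s_1^{-2}$ leaves you with $s_2s_1^3s_2s_1^{-2}s_2$, which still needs a second expansion (or the paper's identity) before $\grv^2$ appears. Your claim that the $\gra=-1$ case of the $s_1^{-2}$-family collapses by braid moves alone is correct (e.g.\ $s_2s_1^{-1}s_2s_1^{-2}s_2=s_2s_1^{-1}s_2^2s_1s_2^{-2}s_1^{-1}=s_2^2s_1^2s_2^{-3}s_1^{-1}$), so there is no actual gap --- but the paper's choice of basis makes the whole computation shorter and keeps the extraction of $\grv^2$ a pure braid-group identity rather than an $H_5$-calculation.
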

\begin {proof}
We recall that $\grv=s_2s_1^2s_2$.
\begin{itemize}[leftmargin=0.6cm]
	\item[(i)] The fact that $\grv^2u_1+u_1u_2u_1u_2u_1\subset U''$ follows directly from the definition of $U''$ and  proposition \ref{p1}. For the rest of the proof, we use the definition of $u_1$ and we have that  $s_2u_1s_2u_1s_2u_1=s_2u_1s_2(R_5+R_5s_1^{-1}+R_5s_1+R_5
	s_1^{2}+R_5s_1^3)s_2u_1\subset \underline{s_2u_1s_2^2u_1}+s_2u_1s_2s_1^{-1}s_2u_1
	+\underline{s_2u_1(s_2s_1s_2)u_1}+
	s_2u_1\grv+s_2u_1s_2s_1^{3}s_2u_1$.
However, $s_2u_1\grv=\underline{s_2\grv u_1}$ and $s_2u_1s_2s_1^{-1}s_2u_1=s_2u_1(s_1s_2s_1^{-1})s_2u_1= (s_2u_1s_2^{-1})s_1s_2^2u_1=\underline{s_1^{-1}u_2s_1^2s_2^2u_1}$. Therefore, it is enough to prove that $s_2u_1s_2s_1^3s_2u_1\subset\grv^2u_1+u_1u_2u_1u_2u_1$. For this purpose, we use again the definition of $u_1$ and we have:
	
	$\small{\begin{array}[t]{lcl}
	s_2u_1s_2s_1^3s_2u_1
	&\subset&
	s_2(R_5+R_5s_1+R_5s_1^{-1}+R_5s_1^2+R_5s_1^3)s_2s_1^{3}s_2u_1\\
	&\subset& \underline{s_2^2s_1^3s_2u_1}+
	\underline{s_2(s_1s_2s_1^{3})s_2u_1}+
	s_2(s_1^{-1}s_2s_1)s_1^{2}s_2u_1+\grv s_1^{3}s_2u_1+s_2s_1^{2}(s_1s_2s_1^{3})s_2u_1\\
	&\subset& \underline{s_2^2s_1(s_2^{-1}s_1^2s_2)u_1}+\underline{s_1^{3}\grv s_2u_1}+s_2s_1^2s_2^2(s_2s_1s_2^2)u_1+u_1u_2u_1u_2u_1\\
	&\subset&\grv^2u_1+u_1u_2u_1u_2u_1.
	\end{array}}$
	\item[(ii)] We have that $s_2\grv^2=s_1(s_1^{-1}s_2^2s_1)(s_1s_2s_1)s_1^{-1}\grv=s_1s_2s_1^4(s_1^{-1}s_2s_1)s_1^{-2}\grv=s_1s_2s_1^4s_2s_1s_2^{-1}s_1^{-2}\grv=s_1s_2s_1^4s_2s_1s_2^{-1}\grv s_1^{-2}=s_1s_2s_1^4s_2s_1^3s_2s_1^{-2}$. Therefore, $s_2\grv^2u_1
	\subset
	u_1s_2u_1s_2u_1s_2u_1.$	The fact that $u_1s_2u_1s_2u_1s_2u_1\subset U''$ follows immediately from (i).
	\qedhere
\end{itemize}
\end{proof}
\begin{prop}\mbox{}  
	\vspace*{-\parsep}
	\vspace*{-\baselineskip}\\ 
	\begin{itemize}[leftmargin=0.6cm]
		\item [(i)]$u_2u_1s_2^{-1}s_1s_2^{-1}\subset u_1\grv^{-2}+R_5s_2^{-2}s_1^2s_2^{-1}s_1s_2^{-1}+u_1u_2u_1u_2u_1\subset U''.$
		\item[(ii)] $u_2u_1s_2s_1^{-1}s_2\subset u_1 \grv^{2}+R_5s_2^{2}s_1^{-2}s_2s_1^{-1}s_2+u_1u_2u_1u_2u_1\subset U''.$
	\end{itemize}
	\label{l2}
\end{prop}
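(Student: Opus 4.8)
The plan is to deduce (ii) from (i) by applying the automorphism $\grF$ of $H_5$. Indeed $\grF(u_i)=u_i$, $\grF(R_5)=R_5$, $\grF(\grv)=s_2^{-1}s_1^{-2}s_2^{-1}=\grv^{-1}$ and $\grF(s_2^{-2}s_1^2s_2^{-1}s_1s_2^{-1})=s_2^2s_1^{-2}s_2s_1^{-1}s_2$, so the image under $\grF$ of the first inclusion in (i) is exactly the first inclusion in (ii); and $U''$ is $\grF$-stable by Lemma \ref{r1}, which also gives the second inclusion in (ii). Thus it suffices to prove (i). Note that there the second inclusion ``$\subset U''$'' is immediate: $u_1\grv^{-2}$ and $u_1s_2^{-2}s_1^2s_2^{-1}s_1s_2^{-1}u_1$ are among the defining summands of $U''$, and $u_1u_2u_1u_2u_1=u_1(u_2u_1u_2)u_1\subset u_1U'u_1=U'\subset U''$ by Proposition \ref{p1}.

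For (i) I would expand the left-hand $u_2$ over the $R_5$-basis $\{1,s_2^{\pm1},s_2^{\pm2}\}$ and the inner $u_1$ over $\{1,s_1^{\pm1},s_1^{\pm2}\}$, reducing all larger powers of $s_1,s_2$ via $(\ref{one})$ and $(\ref{two})$; this reduces the claim to showing that every monomial $s_2^{\gra}s_1^{\grb}s_2^{-1}s_1s_2^{-1}$ with $\gra,\grb\in\{-2,-1,0,1,2\}$ lies in the right-hand side. Several families are quick. When $\gra=0$ the monomial already lies in $u_1u_2u_1u_2$. When $\grb\in\{0,\pm1\}$, a couple of uses of the generalized braid relations (Lemma \ref{lem1}) collapse $s_2^{\gra}s_1^{\grb}s_2^{-1}s_1s_2^{-1}$ into $u_2u_1u_2u_1\subset u_1u_2u_1u_2u_1$. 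The $\grv^{-2}$-summand enters through the case $\gra=-1$: there $s_2^{-1}s_1^{\grb}s_2^{-1}s_1s_2^{-1}\in s_2^{-1}u_1s_2^{-1}u_1s_2^{-1}u_1=\grF(s_2u_1s_2u_1s_2u_1)$, so Lemma \ref{oo}(i), Lemma \ref{r1} and the fact that $\grv$ commutes with $u_1$ give $s_2^{-1}s_1^{\grb}s_2^{-1}s_1s_2^{-1}\in\grF(\grv^2u_1+u_1u_2u_1u_2u_1)=u_1\grv^{-2}+u_1u_2u_1u_2u_1$. The distinguished monomial $s_2^{-2}s_1^2s_2^{-1}s_1s_2^{-1}$ is exactly the case $\gra=-2,\grb=2$ and is simply retained as the $R_5$-term.

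What remains — and this is where the bulk of the work, and any real difficulty, sits — are the high-power cases $\gra\in\{1,2\}$ with $\grb\in\{-2,2\}$, together with $\gra=-2,\grb=-2$. As in the proof of Proposition \ref{p1} I would distinguish on $\gra$ and push inverse powers of $s_1,s_2$ past one another using Lemma \ref{lem1} and the braid relation; most of the resulting words land either in $u_1u_2u_1u_2u_1$ or directly in one of the short summands of $U'$ (such as $u_1s_2^{-1}s_1^2s_2^{-1}u_1$, $u_1s_2s_1^{-2}s_2^2u_1$ or $u_1\grv^{\pm1}$). The obstacle is that a few of these monomials — for example $s_2^2s_1^2s_2^{-1}s_1s_2^{-1}$ — reproduce themselves under the obvious braid moves, since the word length does not drop. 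To break such a loop one must create an extreme power $s_i^{\pm3}$ (writing, say, $s_1^2=s_1^{-1}s_1^3$ and substituting for $s_1^3$ via $(\ref{one})$), transport it to the other generator with Lemma \ref{lem1}, and then reduce it back with $(\ref{two})$; because only the extreme coefficients in $(\ref{one})$--$(\ref{two})$ are invertible, keeping careful track of which terms genuinely simplify into $U'$ and running the resulting finite reduction to completion is delicate, and is essentially the same phenomenon that made the subcase $\gra=2,\grg=-2,\grb=-2$ the awkward one in Proposition \ref{p1}. I expect this bookkeeping, rather than any isolated braid identity, to be the crux of the argument.
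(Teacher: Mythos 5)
Your reduction of (ii) to (i) via $\grF$, your dismissal of the second inclusion, and your treatment of the easy families are all correct and match the paper's strategy in spirit: in particular, the idea of producing the $u_1\grv^{-2}$ summand by recognizing words of the shape $s_2^{-1}u_1s_2^{-1}u_1s_2^{-1}u_1$ as $\grF(s_2u_1s_2u_1s_2u_1)$ and invoking Lemma \ref{oo}(i) together with Lemma \ref{r1} is exactly the mechanism the paper uses. However, the proof is not complete: the five monomials you defer ($\gra\in\{1,2\}$ with $\grb\in\{-2,2\}$, and $(\gra,\grb)=(-2,-2)$) are precisely where the substance of the proposition lies, and you only sketch a heuristic ("create an extreme power, transport it, reduce it back") while conceding that the resulting bookkeeping is "the crux of the argument." A description of where the difficulty sits is not a proof of it. For the record, the $\gra=1$ row is actually trivial ($s_2s_1^{\grb}s_2^{-1}s_1s_2^{-1}=s_1^{-1}s_2^{\grb}s_1^2s_2^{-1}$), but $(2,2)$ genuinely requires a non-obvious rewriting (the paper reduces it to showing $s_2s_1^{-1}s_2^2s_1^2s_2^{-1}\in u_1u_2u_1u_2u_1$ via a specific chain of conjugations), and these are the computations that occupy most of the paper's proof.

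There is also a structural issue created by your choice of decomposition. The paper does not expand the outer $u_2$ over a single basis: for the branch containing $\grb=-2$ it uses the basis $\{1,s_2,s_2^{-1},s_2^{2},s_2^{3}\}$ of $u_2$, deliberately avoiding $s_2^{-2}$, so the combination $(\gra,\grb)=(-2,-2)$ never arises. Your basis $\{1,s_2^{\pm1},s_2^{\pm2}\}$ forces you to place $s_2^{-2}s_1^{-2}s_2^{-1}s_1s_2^{-1}$ into $u_1\grv^{-2}+u_1u_2u_1u_2u_1$, and the obvious moves do not do this: writing it as $s_2^{-1}\grv^{-1}s_1s_2^{-1}=s_2^{-1}s_1s_2^{-1}s_1^{-2}s_2^{-2}$ leaves a trailing $s_2^{-1}$ that prevents a direct application of your $\grF(\ref{oo}(i))$ trick, and multiplying the conclusion of that trick on the right by $s_2^{-1}$ lands outside the stated right-hand side. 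So at least one of your residual cases needs either a genuinely new manipulation or a different choice of basis for $u_2$; as written, the argument has a gap exactly at the hard cases.
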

\begin{proof}
	We restrict ourselves to proving $(i)$, since $(ii)$ follows from $(i)$ by applying $\grF$ (see lemma \ref{r1}). By the definition of $U''$ and by proposition \ref{p1} we have that  $u_1\grv^{-2}+R_5s_2^{-2}s_1^2s_2^{-1}s_1s_2^{-1}+u_1u_2u_1u_2u_1\subset U''$. Therefore, it remains to prove that  $u_2u_1s_2^{-1}s_1s_2^{-1}\subset u_1\grv^{-2}+R_5s_2^{-2}s_1^2s_2^{-1}s_1s_2^{-1}+u_1u_2u_1u_2u_1.$
	
	By he definition of $u_1$ we have that 
		$u_2u_1s_2^{-1}s_1s_2^{-1}=u_2(R_5+R_5s_1+R_5s_1^{-1}+R_5s_1^{-2}
		+R_5s_1^{2})s_2^{-1}s_1s_2^{-1}
		\subset \underline{u_2s_1s_2^{-1}}+
		u_2s_1s_2^{-1}s_1s_2^{-1}+\underline{u_2(s_1^{-1}s_2^{-1}s_1)s_2^{-1}}+u_2s_1^{-2}s_2^{-1}s_1s_2^{-1}
		+u_2s_1^{2}s_2^{-1}s_1s_2^{-1}$.
	We notice that $u_2s_1s_2^{-1}s_1s_2^{-1}=u_2(s_2s_1s_2^{-1})s_1s_2^{-1}=\underline{u_2s_1^{-1}(s_2s_1^2s_2^{-1})}$.
	Therefore, we only have to prove that $u_2s_1^{-2}s_2^{-1}s_1s_2^{-1}$ and 
	$u_2s_1^{2}s_2^{-1}s_1s_2^{-1}$ are subsets of $ u_1\grv^{-2}+R_5s_2^{-2}s_1^2s_2^{-1}s_1s_2^{-1}+u_1u_2u_1u_2u_1$. We have: \\ 
$\small{\begin{array}[t]{lcl}
		u_2s_1^{-2}s_2^{-1}s_1s_2^{-1}
	&\subset&(R_5+R_5s_2+R_5s_2^{-1}+
		R_5s_2^2+R_5s_2^3)s_1^{-2}s_2^{-1}s_1s_2^{-1}\\
		&\subset&\underline{R_5s_1^{-2}s_2^{-1}s_1s_2^{-1}}+\underline{
			R_5(s_2s_1^{-2}s_2^{-1})s_1s_2^{-1}}+R_5\grv^{-1}s_1s_2^{-1}+
		R_5s_2(s_2s_1^{-2}s_2^{-1})s_1s_2^{-1}+\\&&+
		R_5s_2^2(s_2s_1^{-2}s_2^{-1})s_1s_2^{-1}\\
		
		&\subset&\underline{R_5s_1\grv^{-1}s_2^{-1}}+R_5(s_2s_1^{-1}s_2^{-1})s_2^{-1}s_1^2s_2^{-1}+
		R_5s_2(s_2s_1^{-1}s_2^{-1})s_2^{-1}s_1^2s_2^{-1}+u_1u_2u_1u_2u_1\\
		&\subset&R_5s_1^{-1}s_2^{-1}s_1s_2^{-1}s_1^2s_2^{-1}+R_5(s_2s_1^{-1}s_2^{-1})s_1s_2^{-1}s_1^2s_2^{-1}+u_1u_2u_1u_2u_1\\
		&\subset&\grF(u_1s_2u_1s_2u_1s_2)+u_1u_2u_1u_2u_1.
		\end{array}}$\\ \\
However, by lemma \ref{oo}(i) we have that $\grF(u_1s_2u_1s_2u_1s_2)\subset \grF(\grv^2u_1+u_1u_2u_1u_2u_1)=\grv^{-2}u_1+u_1u_2u_1u_2u_1$. Therefore, 
$ u_2s_1^{-2}s_2^{-1}s_1s_2^{-1}\subset 	\grv^{-2}u_1+u_1u_2u_1u_2u_1$.	
By using analogous calculations, we have: \\
$\small{\begin{array}[t]{lcl}
		u_2s_1^{2}s_2^{-1}s_1s_2^{-1}	&\subset&(R_5+R_5s_2
	+R_5s_2^{-1}+R_5s_2^2+R_5s_2^{-2})s_1^{2}s_2^{-1}s_1s_2^{-1}\\
	&\subset& \underline{R_5s_1^2s_
		2^{-1}s_1s_2^{-1}}+\underline{R_5(s_2s_1^2s_2^{-1})s_1s_2^{-1}}+
	R_5s_2^{-1}s_1^3(s_1^{-1}s_2^{-1}s_1)s_2^{-1}+
	R_5s_2(s_2s_1^2s_2^{-1})s_1s_2^{-1}+\\&&+R_5s_2^{-2}s_1^2s_2^{-1}s_1s_2^{-1}\\
	&\subset&
	\underline{R_5(s_2^{-1}s_1^3s_2)s_1^{-1}s_2^{-2}}+
	R_5s_2s_1^{-1}s_2^{2}s_1^2s_2^{-1}+R_5s_2^{-2}s_1^2s_2^{-1}s_1s_2^{-1}+u_1u_2u_1u_2u_1.
	\end{array}}$\\ \\
It is enough  to prove that $s_2s_1^{-1}s_2^{2}s_1^2s_2^{-1}\subset u_1u_2u_1u_2u_1$. Indeed, we have: $s_2s_1^{-1}s_2^2s_1^2s_2^{-1}=s_1^{-1}(s_1s_2s_1^{-1})s_2(s_2s_1^2s_2^{-1})=\underline{s_1^{-1}s_2^{-1}(s_1s_2^2s_1^{-1})s_2^2s_1}$.

	\qedhere
\end{proof}
We can now prove a lemma that helps us to ``replace''  inside the definition of $U'''$ the element $\grv^3$ with the element $s_2s_1^3s_2^2s_1^2s_2^2$ modulo $U''$.
\begin{lem}$s_2s_1^3s_2^2s_1^2s_2^2\in u_1s_2u_1s_2s_1^3s_2u_1+u_1s_2^2s_1^3s_2s_1^{-1}s_2u_1+
	u_1u_2u_1u_2u_1+ u_1^{\times}\grv^3\subset u_1^{\times}\grv^3+\nolinebreak U''.$
	\label{cc}
\end{lem}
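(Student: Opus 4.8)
The plan is to start from the left-hand side $s_2s_1^3s_2^2s_1^2s_2^2$ and rewrite it, using only Lemma~\ref{lem1}, the generalized braid relations, and the definition $\grv=s_2s_1^2s_2$, into a combination of the three ``target'' terms plus an invertible multiple of $\grv^3$. The natural first move is to expose a copy of $\grv$ inside the word: since $\grv = s_2s_1^2s_2$, one tries to slide factors of $s_1$ and $s_2$ around so that a block $s_2s_1^2s_2$ (or $s_1^2$, which commutes with $\grv$ up to moving it past one block) appears, typically by writing $s_1^3 = s_1\cdot s_1^2$ and using $s_1(s_1^{-1}s_2 s_1) = s_2 s_1 s_2^{-1}$-type identities from Lemma~\ref{lem1} to commute things through. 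The goal of this first stage is to reach an expression of the shape (word in $u_1, s_2$)$\cdot \grv \cdot$(word), so that one factor of $\grv$ is isolated.

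Once one $\grv$ is extracted, I would use centrality-type relations — recall $\grv$ commutes with all of $u_1$, and $s_1^{\pm 2}\grv^{-2} = \grv^{-2} s_1^{\pm 2}$ was used similarly in the proof of Theorem~\ref{th} — to push the remaining braid word against the $\grv$, peeling off a second and then a third copy of $\grv$. At each peeling step one expands an offending power $s_1^{\pm 3}$ or $s_2^{\pm 3}$ as a linear combination of the ``standard'' powers $s_i^{2},s_i,1,s_i^{-1},s_i^{-2}$ using relation~(\ref{one})/(\ref{two}); the crucial bookkeeping point is to track that the coefficient of the extreme power (the one that will finally produce $\grv^3$ rather than a shorter word) is invertible in $R_5$ — this is exactly what underlies the claim $\in u_1^{\times}\grv^3$. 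The lower-order terms generated by these expansions should fall into $u_1 s_2 u_1 s_2 s_1^3 s_2 u_1$, into $u_1 s_2^2 s_1^3 s_2 s_1^{-1} s_2 u_1$, or collapse (via Lemma~\ref{lem1} and the braid relation) into $u_1u_2u_1u_2u_1$, which by Proposition~\ref{p1} lies in $U'\subset U''$.

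For the final inclusion $u_1 s_2 u_1 s_2 s_1^3 s_2 u_1 + u_1 s_2^2 s_1^3 s_2 s_1^{-1} s_2 u_1 + u_1u_2u_1u_2u_1 + u_1^{\times}\grv^3 \subset u_1^{\times}\grv^3 + U''$, I would observe that $u_1u_2u_1u_2u_1\subset U''$ is Proposition~\ref{p1}; that $u_1 s_2 u_1 s_2 s_1^3 s_2 u_1 \subset s_2 u_1 s_2 u_1 s_2 u_1$-type expressions handled by Lemma~\ref{oo}(i), which places them in $\grv^2 u_1 + u_1u_2u_1u_2u_1 \subset U''$ (the stray left/right $u_1$-factors are harmless since $U''$ is a $u_1$-bimodule); and that $u_1 s_2^2 s_1^3 s_2 s_1^{-1} s_2 u_1$ can be brought into $u_2 u_1 s_2 s_1^{-1} s_2$-type form after expanding $s_1^3$ and applying Proposition~\ref{l2}(ii), landing in $u_1\grv^2 + R_5 s_2^2 s_1^{-2}s_2 s_1^{-1}s_2 u_1 + u_1u_2u_1u_2u_1 \subset U''$. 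So everything except the isolated $u_1^{\times}\grv^3$ term is absorbed into $U''$.

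The main obstacle I anticipate is purely computational control in the first stage: finding the right sequence of Lemma~\ref{lem1} manipulations to extract three clean copies of $\grv$ from $s_2s_1^3 s_2^2 s_1^2 s_2^2$ while keeping every residual term inside one of the three named reservoirs, and in particular verifying that the coefficient multiplying $\grv^3$ at the end is a unit of $R_5$ rather than an arbitrary element — this last point is what forces the careful choice, at each expansion, of which extreme power of $s_i$ to solve for. There is no conceptual difficulty; the risk is simply a mis-commutation or a sign/coefficient error, so I would organize the computation as a chain of equalities with each application of Lemma~\ref{lem1} or of~(\ref{one}) flagged explicitly, exactly in the style of the proofs of Proposition~\ref{prop1} and Theorem~\ref{th}.
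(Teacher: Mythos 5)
Your proposal correctly identifies the two easy ingredients: the final inclusion $u_1s_2u_1s_2s_1^3s_2u_1+u_1s_2^2s_1^3s_2s_1^{-1}s_2u_1+u_1u_2u_1u_2u_1+u_1^{\times}\grv^3\subset u_1^{\times}\grv^3+U''$ does follow exactly as you say from Lemma \ref{oo}(i), Proposition \ref{l2}(ii) and Proposition \ref{p1}, and the general mechanism of expanding an extreme power of $s_i$ with invertible leading coefficient is indeed what produces the $u_1^{\times}\grv^3$ term. But the substance of the lemma --- the explicit rewriting of $s_2s_1^3s_2^2s_1^2s_2^2$ and the verification that every residual term lands in one of the three named reservoirs --- is missing, and this is not a routine gap. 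The paper does not peel off three copies of $\grv$ one at a time as you propose; it performs a single global rewriting, using identities such as $s_1s_2^2s_1^{-1}=s_2^{-2}\grv$ and the centrality of $s_1^2\grv$, to reach $s_2s_1^3s_2^2s_1^2s_2^2=s_2s_1^2s_2^{-3}s_1^2s_2^2s_1^2s_2s_1^3$, and then expands the single factor $s_2^{-3}$: the $s_2^2$-term of that expansion is literally $\grv^3s_1^3$ (since $\grv^3=s_2s_1^2s_2^2s_1^2s_2^2s_1^2s_2$), which yields $u_1^{\times}\grv^3$ in one stroke.

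An iterated peeling scheme would, at each stage, generate its own residual terms, and there is no a priori reason these would fall into the two very specific reservoirs $u_1s_2u_1s_2s_1^3s_2u_1$ and $u_1s_2^2s_1^3s_2s_1^{-1}s_2u_1$ rather than into longer words requiring new auxiliary lemmas; controlling exactly this is the hard part. Even after the paper's single expansion, the remaining terms $s_2s_1^2s_2^{-2}s_1^2s_2^2s_1^2s_2$ and $s_2s_1^2s_2^{-1}s_1^2s_2^2s_1^2s_2$ need substantial further manipulation (both are reduced to $s_2s_1^3s_2^2u_1s_2u_1$, and that set is then shown to lie in the reservoirs). So while your outline points in the right direction and correctly anticipates where invertibility of the leading coefficient matters, it does not yet constitute a proof: the decisive computation is absent, and the route you sketch for it (three successive extractions of $\grv$) is not the one that is known to close.
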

\begin{proof}
The fact that 	$u_1s_2u_1s_2s_1^3s_2u_1+u_1s_2^2s_1^3s_2s_1^{-1}s_2u_1+
u_1u_2u_1u_2u_1+ u_1^{\times}\grv^3$ is a subset of $u_1^{\times}\grv^3+U''$ follows  from lemma \ref{oo}$(i)$ and propositions \ref{l2}(ii) and \ref{p1}. 
For the rest of the proof, 	we have: 	
		$s_2s_1^3s_2^2s_1^2s_2^2=s_2s_1^2(s_1s_2^2s_1^{-1})s_1^2(s_1s_2^2s_1^{-1})s_1=s_2s_1^2s_2^{-2}\grv s_1^2s_2^{-1}s_1(s_1s_2s_1^{-1})s_1^2=
		s_2s_1^2s_2^{-2}s_1^2\grv s_2^{-1}s_1s_2^{-1}(s_1s_2s_1^{-1})s_1^3=s_2s_1^2s_2^{-2}s_1^2s_2s_1^3s_2^{-2}s_1s_2s_1^3=s_2s_1^2s_2^{-3}\bold{\boldsymbol{\grv} s_1^3s_2^{-2}}s_1s_2s_1^3$.
		However, $\bold{\boldsymbol{\grv} s_1^3s_2^{-2}}=s_1^3\grv s_2^{-2}=s_1^3(s_2s_1^2s_2^{-1})=s_1^2s_2^2s_1$ and, hence, $s_2s_1^3s_2^2s_1^2s_2^2=s_2s_1^2s_2^{-3}s_1^2s_2^2s_1^2s_2s_1^3$.
		
		 Our goal now is to prove that the element $s_2s_1^2s_2^{-3}s_1^2s_2^2s_1^2s_2s_1^3$ is inside $ u_1s_2u_1s_2s_1^3s_2u_1+u_1s_2^2s_1^3s_2s_1^{-1}s_2u_1+
		u_1u_2u_1u_2u_1+ u_1^{\times}\grv^3$. For this purpose we expand $s_2^{-3}$ as a linear combination of $s_2^{-2}$, $s_2^{-1}$, 1, $s_2$ and $s_2^2$, where the coefficient of $s_2^2$ is invertible, and we have that  
	$s_2s_1^2s_2^{-3}s_1^2s_2^2s_1^2s_2s_1^3\in s_2s_1^2s_2^{-2}s_1^2s_2^2s_1^2s_2u_1+s_2s_1^2s_2^{-1}s_1^2s_2^2s_1^2s_2u_1+
	s_2s_1^4s_2^2s_1^2s_2u_1+s_2\grv^2u_1+u_1^{\times}\grv^3$.
	However, by lemma \ref{oo}(ii) we have that $s_2\grv^2u_1\subset u_1s_2u_1s_2s_1^3s_2u_1$. Moreover, $s_2s_1^4s_2^2s_1^2s_2u_1=s_2s_1^5(s_1^{-1}s_2^2s_1)(s_1s_2s_1)u_1\subset u_1s_2u_1s_2s_1^3s_2u_1$. It remains to prove that the elements
	$s_2s_1^2s_2^{-2}s_1^2s_2^2s_1^2s_2$ and $s_2s_1^2s_2^{-1}s_1^2s_2^2s_1^2s_2$
	are inside $u_1s_2u_1s_2s_1^3s_2u_1+u_1s_2^2s_1^3s_2s_1^{-1}s_2u_1+
	u_1u_2u_1u_2u_1$.
	
	 On one hand,  we have
	$s_2s_1^2s_2^{-2}s_1^2s_2^2s_1^2s_2=s_2s_1^3(s_1^{-1}s_2^{-2}s_1)s_1s_2\grv=s_2s_1^3s_2s_1^{-1}(s_1^{-1}s_2^{-1}s_1)s_2\grv=s_2s_1^3s_2^2(s_2^{-1}s_1^{-1}s_2)s_1^{-1}\grv=s_2s_1^3s_2^2s_1s_2^{-1}s_1^{-2}\grv=s_2s_1^3s_2^2s_1s_2^{-1}\grv s_1^{-2}=s_2s_1^3s_2^2s_1^3s_2s_1^{-2}$, meaning that the element $s_2s_1^2s_2^{-2}s_1^2s_2^2s_1^2s_2$ is inside $s_2s_1^3s_2^2u_1s_2u_1$. On the other hand, 
	$s_2s_1^2s_2^{-1}s_1^2s_2^2s_1^2s_2
=s_2s_1^2(s_2^{-1}s_1^2s_2)\grv=s_2s_1^3s_2^2s_1^{-1}\grv=s_2s_1^3s_2^2\grv s_1^{-1}=s_2s_1^3s_2^3s_1^2s_2s_1^{-1}$ and, if we expand $s_2^3$ as a linear combination of $s_2^2$, $s_2$, 1, $s_2^{-1}$ and $s_2^{-2}$, we have that $s_2s_1^3s_2^3s_1^2s_2s_1^{-1}\in s_2s_1^3s_2^{2}s_1^2s_2u_1+s_2s_1^3\grv u_1+
\underline{s_2s_1^5s_2u_1}+\underline{
	(s_2s_1^3s_2^{-1})s_1^2s_2u_1}+\underline{(s_2s_1^3s_2^{-1})(s_2^{-1}s_1^2s_2)u_1}\subset 
	s_2s_1^3s_2^{2}u_1s_2u_1+\underline{s_2\grv u_1}+u_1u_2u_1u_2u_1$, meaning that the element $s_2s_1^2s_2^{-1}s_1^2s_2^2s_1^2s_2$ is inside 	$s_2s_1^3s_2^{2}u_1s_2u_1+u_1u_2u_1u_2u_1$. As a result, in order to finish the proof, it will be sufficient to show that $s_2s_1^3s_2^{2}u_1s_2u_1$ is a subset of
	$u_1s_2u_1s_2s_1^3s_2u_1+u_1s_2^2s_1^3s_2s_1^{-1}s_2u_1+
	u_1u_2u_1u_2u_1$. Indeed, we have: 
$$\small{\begin{array}[t]{lcl}
			s_2s_1^3s_2^{2}u_1s_2u_1		&\subset&s_2s_1^3s_2^2(R_5s_1^2+R_5s_1+R_5+R_5s_1^{-1}+R_5s_1^{-2})s_2u_1\\
			&\subset&s_2s_1^3s_2^2s_1^2s_2u_1+\underline{s_2s_1^3(s_2^2s_1s_2)u_1}+\underline{s_2s_1^3s_2^3u_1}
			+s_2s_1^2(s_1s_2^2s_1^{-1})s_2u_1+
			s_2s_1^2(s_1s_2^2s_1^{-1})s_1^{-1}s_2u_1\\
			&\subset&s_2s_1^4(s_1^{-1}s_2^2s_1)(s_1s_2s_1)u_1+\underline{
				(s_2s_1^2s_2^{-1})s_1^2s_2^2u_1}+
			(s_2s_1^2s_2^{-1})s_1^2s_2s_1^{-1}s_2u_1+u_1u_2u_1u_2u_1\\
			&\subset& u_1s_2u_1s_2s_1^3s_2u_1+u_1s_2^2s_1^3s_2s_1^{-1}s_2u_1+u_1u_2u_1u_2u_1.
			\end{array}}$$
		
\end{proof} 
\begin{prop}
	\mbox{} 
	\vspace*{-\parsep}
	\vspace*{-\baselineskip}\\
	\begin{itemize}[leftmargin=0.6cm]
		\item [(i)]$s_2u_1u_2u_1u_2 \subset U'''$.
		\item [(ii)]$s_2^{-1}u_1u_2u_1u_2 \subset U'''$.
	\end{itemize}
	\label{p2}
\end{prop}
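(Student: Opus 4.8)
The plan is to deduce (ii) from (i) via the automorphism $\grF$, and to prove (i) by reducing to a finite family of short words which are then dispatched using the results already established.

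\emph{Step 1: (ii) from (i).} The map $\grF$ is a $\ZZ$-algebra automorphism with $\grF(s_i)=s_i^{-1}$, so it stabilises $u_1$ and $u_2$ setwise and $\grF(s_2u_1u_2u_1u_2)=s_2^{-1}u_1u_2u_1u_2$. Since $U'''$ is $\grF$-stable by Lemma \ref{r1}, the inclusion in (i) maps under $\grF$ to the inclusion in (ii). So it suffices to prove (i).

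\emph{Step 2: reduction of (i) to $5$-syllable words.} Since each $u_i$ is spanned over $R_5$ by $5$ consecutive powers of $s_i$ (relations (\ref{one})--(\ref{two})), expanding the four inner factors of $s_2u_1u_2u_1u_2$ reduces (i) to showing $s_2s_1^{\grb}s_2^{\grg}s_1^{\grd}s_2^{\gre}\in U'''$ for bounded exponents $\grb,\grg,\grd,\gre$. If any of $\grb,\grg,\grd$ is $0$, two consecutive $s_i$-powers merge and, after re-expanding the resulting power, the word lies in $u_2u_1u_2$ or $u_2u_1u_2u_1$; if $\gre=0$ it lies in $u_2u_1u_2u_1$. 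By Proposition \ref{p1} and the $u_1$-bimodule property of $U'$ (giving $u_2u_1u_2u_1=(u_2u_1u_2)u_1\subseteq U'$), all of these are in $U'\subseteq U'''$. Moreover, if $\grg=-1$, Lemma \ref{lem1} applied to the leading block gives $s_2s_1^{\grb}s_2^{-1}=s_1^{-1}s_2^{\grb}s_1$, collapsing the word into $u_1u_2u_1u_2\subseteq U'$. Only finitely many genuinely $5$-syllable words remain.

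\emph{Step 3: the remaining words.} These are dispatched according to the pattern of the three $s_2$-syllables. If all three equal $s_2^{1}$, the word lies in $s_2u_1s_2u_1s_2u_1\subseteq U''$ by Lemma \ref{oo}(i) (and, via $\grF$, similarly for the all-$s_2^{-1}$ pattern). If a $\pm1$ exponent sits adjacent to an $s_1$-power, Lemma \ref{lem1} and the generalised braid relations absorb it and shorten the word into a product lying in $U'$ by Proposition \ref{p1}. The genuinely new words carry $s_2^{\pm2}$ syllables; in each such word one isolates a $3$-syllable block of the form $s_2s_1^{-1}s_2$, $s_2^{-1}s_1s_2^{-1}$, or $\grv^{\pm1}=s_2^{\pm1}s_1^{\pm2}s_2^{\pm1}$, and then uses Proposition \ref{l2} (which rewrites the first two, modulo $U''$, as left multiples of $\grv^{\pm2}$), the fact that $\grv$ commutes with $u_1$, Lemma \ref{oo}(ii) ($s_2\grv^{2}u_1\subseteq U''$), and — in the heaviest cases, where a factor $\grv^{\pm3}$ is produced — Lemma \ref{cc}, which absorbs it into $u_1\grv^{\pm3}$ modulo $U''$. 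Thus every word lies in $U'+u_1\grv^{\pm2}+u_1\grv^{\pm3}\subseteq U'''$, which proves (i); (ii) then follows from Step 1.

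\textbf{Main obstacle.} The real labour is Step 3 for the words carrying two $s_2^{\pm2}$ syllables: one must strip $\grv^{\pm1}$-blocks off one at a time, using Lemma \ref{lem1} and the centrality of $\grv$ over $u_1$, while checking at each step that the residual terms fall precisely into pieces of $U''$ (or, in the extreme case, into $u_1\grv^{\pm3}$ exactly as controlled by Lemma \ref{cc}) rather than into some longer alternating product not yet known to lie in $U'''$.
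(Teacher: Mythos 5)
Your Steps 1 and 2 coincide with the paper's strategy: part (ii) follows from part (i) via $\grF$ and Lemma \ref{r1}, and part (i) reduces, via Proposition \ref{p1} and the $u_1$-bimodule structure, to finitely many words $s_2s_1^{\grb}s_2^{\grg}s_1^{\grd}s_2^{\gre}$ (the paper phrases this as proving $s_2U'\subset U'''$). Your toolkit --- Propositions \ref{p1} and \ref{l2}, Lemmas \ref{oo}, \ref{cc}, \ref{r1} --- is also exactly the paper's. But Step 3 is where the entire mathematical content of the proposition lives, and you do not carry it out: not a single one of the genuinely hard words is actually reduced, and the blanket claims you use to dismiss whole families of cases are false or vacuous as stated. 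The ``all-$s_2^{-1}$ pattern'' cannot occur in (i), since the leading syllable is $s_2^{+1}$. The assertion that a $\pm1$ exponent adjacent to an $s_1$-power always ``shortens the word into a product lying in $U'$'' fails: applying Lemma \ref{lem1} to an interior $s_2^{\pm1}$ flanked by two $s_1$-powers typically produces another five-syllable word with shifted exponents, not a shorter one, and words such as $s_2s_1^{2}s_2^{2}s_1^{-2}s_2^{2}$ or $s_2s_1^{\grb}s_2^{-2}s_1^{2}s_2^{-2}$ (the paper's cases C2 and C4) each require a multi-step rewriting chain that only terminates by an appeal to Proposition \ref{l2} or to an earlier case --- nothing close to a one-line collapse.

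The final containment you assert, that every remaining word lies in $U'+u_1\grv^{\pm2}+u_1\grv^{\pm3}$, is also unsupported and is not what the paper proves: the argument genuinely relies on the extra length-five and length-seven generators of $U''$, e.g.\ $u_1s_2s_1^{-2}s_2^{2}s_1^{-2}s_2^{2}u_1$ and $u_1s_2^{-2}s_1^{2}s_2^{-1}s_1s_2^{-1}u_1$, which are put into $U''$ by fiat precisely because they do not visibly reduce to $U'+u_1\grv^{\pm2}$; if your stronger containment were provable, the definition of $U''$ could be simplified, and you would owe a separate proof of that. So what you have is a correct road map with the same landmarks as the paper, but the journey itself --- the word-by-word case analysis occupying the paper's cases C1--C5, roughly two pages of computation, including the auxiliary module $V$ and the interdependence of the cases --- is missing, and the shortcuts offered in its place do not survive scrutiny.
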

\begin{proof}By lemma \ref{r1}, we only have to prove $(i)$, since $(ii)$ is a consequence of $(i)$ up to applying $\grF$. We know that $u_2u_1u_2 \subset U'$ (proposition \ref{p1}) hence it is enough to prove that $s_2U'\subset U'''$. Set 
	$$\small{\begin{array}{lcl}V&=&u_1u_2u_1+\grv u_1+\grv^{-1}u_1+u_1s_2^{-1}s_1^2s_2^{-1}u_1+
	u_1s_2^{-1}s_1s_2^{-1}u_1+u_1s_2s_1^{-1}s_2u_1+u_1s_2^{-2}s_1s_2^
	{-1}u_1+\\&&+u_1s_2^{-1}s_1^2s_2^{-2}u_1+u_1s_2^{-1}s_1s_2^{-2}u_1+u_1s_2s_1^{-2}s_2u_1+
	u_1s_2^{-2}s_1^{-2}s_2^{-2}u_1+
	u_1s_2^{-2}s_1^{-2}s_2^{2}u_1.
	\end{array}}$$
	We notice that 
	$$\begin{array}{lcl}
	U'&=&V+u_1s_2s_1^{-2}s_2^{2}u_1+
	u_1s_2^{2}s_1^{-2}s_2^{2}u_1+u_1s_2^{2}s_1^{2}s_2^{2}u_1+
	u_1s_2^{-2}s_1^{2}s_2^{-2}u_1+u_1s_2^{2}s_1^{2}s_2^{-2}u_1.
	\end{array}$$
	Therefore, in order to prove that $s_2U'\subset U'''$, we will prove first that $s_2V\subset U'''$ and then we will check the other five cases separately. 
	We have: 
	$$\small{\begin{array}{lcl}s_2V&\subset&
		\underline{s_2u_1u_2u_1}+\underline{s_2\grv u_1}+\underline{s_2\grv^{-1} u_1}+\underline{(s_2u_1s_2^{-1})u_1u_2u_1}+s_2u_1s_2u_1s_2+s_2u_1s_2^{-2}s_1s_2^{-1}+\\&&+
		s_2u_1s_2^{-2}s_1^{-2}s_2^{-2}u_1+
		s_2u_1s_2^{-2}s_1^{-2}s_2^{2}u_1+U'''
		\end{array}}$$
	However, by proposition \ref{oo}(i) we have that $s_2u_1s_2u_1s_2\subset U''\subset U'''$. It remains to prove that $A:=s_2u_1s_2^{-2}s_1s_2^{-1}+
	s_2u_1s_2^{-2}s_1^{-2}s_2^{-2}u_1+
	s_2u_1s_2^{-2}s_1^{-2}s_2^{2}u_1$ is a subset of $U'''$. We have: 
	$$\small{\begin{array}{lcl}
		A&=&s_2u_1s_2^{-2}s_1s_2^{-1}+s_2u_1s_2^{-2}s_1^{-2}s_2^{-2}u_1+
		s_2u_1s_2^{-2}s_1^{-2}s_2^{2}u_1\\
		&=&(s_2u_1s_2^{-1})s_2^{-1}s_1s_2^{-1}u_1+
		(s_2u_1s_2^{-1})s_2^{-1}s_1^{-2}s_2^{-2}u_1+
		(s_2u_1s_2^{-1})s_2^{-1}s_1^{-2}s_2^2u_1\\
		&=&s_1^{-1}u_2(s_1s_2^{-1}s_1^{-1})s_1^2s_2^{-1}u_1+s_1^{-1}u_2(s_1s_2^{-1}s_1^{-1})s_1^{-1}s_2^{-2}u_1+
		s_1^{-1}u_2s_1(s_2^{-1}s_1^{-2}s_2)s_2u_1\\
		&=&s_1^{-1}u_2s_1^{-1}(s_2s_1^2s_2^{-1})u_1+s_1^{-1}u_2s_1^{-1}(s_2s_1^{-1}s_2^{-1})s_2^{-1}u_1+s_1^{-1}u_2s_1^2s_2^{-1}(s_2^{-1}s_1^{-1}s_2)u_1\\
		&\subset&u_1(u_2u_1s_2^{-1}s_1s_2^{-1})u_1.
		\end{array}}$$
	By proposition \ref{l2} we have then $A\subset U'''$ and, hence, we proved that
	 \begin{equation}s_2V\subset U'''\label{7}\end{equation}
	In order to finish the proof that $s_2U'\subset U''$,  it will be sufficient to prove that $u_1s_2s_1^{-2}s_2^{2}u_1$, 
	$u_1s_2^{2}s_1^{-2}s_2^{2}u_1$, $u_1s_2^{2}s_1^{2}s_2^{2}u_1$,
	$u_1s_2^{-2}s_1^{2}s_2^{-2}u_1$ and $u_1s_2^{2}s_1^{2}s_2^{-2}u_1$ are subsets of $U'''$. 
	\begin{itemize}[leftmargin=0.6cm]
		\item[C1.] We will prove that $s_2u_1s_2s_1^{-2}s_2^2u_1\subset U'''$. We expand $s_2^2$ as a linear combination of $s_2$, 1 $s_2^{-1}$, $s_2^{-2}
$ and $s_2^{-3}$ and we have that 		
		$s_2u_1s_2s_1^{-2}s_2^2u_1\subset s_2u_1s_2s_1^{-2}s_2u_1+\underline{s_2u_1s_2u_1}+\underline{s_2u_1(s_2s_1^{-2}s_2^{-1})u_1}+
		s_2u_1(s_2s_1^{-2}s_2^{-1})s_2^{-1}u_1+
		s_2u_1s_2s_1^{-2}s_2^{-3}u_1
	\subset s_2u_1s_2s_1^{-2}s_2^{-3}u_1+s_2V+U'''$ and, hence, 
		by relation (\ref{7}) we have that $s_2u_1s_2s_1^{-2}s_2^2u_1\subset s_2u_1s_2s_1^{-2}s_2^{-3}u_1+U'''$. Therefore, it will be sufficient to prove that $s_2u_1s_2s_1^{-2}s_2^{-3}u_1\subset U'''$. We use the definition of $u_1$ and we have:
		
		$\small{\begin{array}[t]{lcl}
			s_2u_1s_2s_1^{-2}s_2^{-3}u_1
			&\subset&s_2(R_5+R_5s_1+R_5s_1^{-1}+R_5s_1^2+R_5s_1^{3})s_2s_1^{-2}s_2^{-3}u_1\\
			&\subset&\underline{s_2^2s_1^{-2}s_2^{-3}u_1}+\underline{(s_2s_1s_2)s_1^{-2}s_2^{-3}u_1}+
			s_2s_1^{-1}s_2s_1^{-2}s_2^{-3}u_1+\grv s_1^{-2}s_2^{-3}u_1+\\&&+s_2s_1^{3}s_2s_1^{-2}s_2^{-3}u_1\\
			&\subset&s_1^{-1}(s_1s_2s_1^{-1})s_2s_1^{-2}s_2^{-3}u_1+\underline{s_1^{-2}\grv s_2^{-3}u_1}+
			s_2s_1^{2}(s_1s_2s_1^{-1})s_1^{-1}s_2^{-3}u_1+U'''\\
			&\subset&s_1^{-1}s_2^{-1}(s_1s_2^2s_1^{-1})s_1^{-1}s_2^{-3}u_1+
			(s_2s_1^{2}s_2^{-1})s_1s_2s_1^{-1}s_2^{-3}u_1+U'''\\
			&\subset&s_1^{-1}s_2^{-2}s_1^2(s_2s_1^{-1}s_2^{-1})s_2^{-2}u_1+
			s_1^{-1}s_2^2s_1(s_1s_2s_1^{-1})s_2^{-3}u_1+
			U'''\\
			&\subset&s_1^{-1}s_2^{-3}(s_2s_1s_2^{-1})s_1s_2^{-2}u_1+s_1^{-1}s_2(s_2s_1s_2^{-1})s_1s_2^{-2}u_1+U'''\\
			&\subset&s_1^{-1}s_2^{-3}s_1^{-1}(s_2s_1^2s_2^{-1})s_2^{-1}u_1+s_1^{-1}s_2s_1^{-1}(s_2s_1^2s_2^{-1})s_2^{-1}u_1+U'''\\
			&\subset&
			s_1^{-1}s_2^{-3}s_1^{-2}s_2(s_2s_1s_2^{-1})u_1+s_1^{-1}s_2s_1^{-2}s_2(s_2s_1s_2^{-1})u_1+U'''\\
			&\subset&u_1(u_2u_1s_2s_1^{-1}s_2)u_1+U'''.
			\end{array}}$
		
		The result follows from proposition \ref{l2}(ii).
		\item[C2.] We will prove that $s_2u_1s_2^2s_1^{-2}s_2^2u_1\subset U'''$. For this purpose, we expand $u_1$ as $R_5+R_5s_1+R_5s_1^4+R_5s_1^2+R_5s_1^{-2}$ and we have that $s_2u_1s_2^2s_1^{-2}s_2^2u_1\subset 
	\underline{s_2^3s_1^{-2}s_2^2u_1}+\underline{
		(	s_2s_1s_2^2)s_1^{-2}s_2^2u_1}+s_2s_1^4s_2^2s_1^{-2}s_2^2u_1
		+s_2s_1^2s_2^2s_1^{-2}s_2^2u_1+s_2s_1^{-2}s_2^2s_1^{-2}s_2^2u_1$.
		By the definition of $U'''$ we have that $s_2s_1^{-2}s_2^2s_1^{-2}s_2^2u_1\subset U'''$. Therefore, it remains to  prove that $s_2s_1^4s_2^2s_1^{-2}s_2^2u_1
		+s_2s_1^2s_2^2s_1^{-2}s_2^2u_1\subset U'''$. We notice that

		$\small{\begin{array}{lcl}s_2s_1^4s_2^2s_1^{-2}s_2^2u_1
		+s_2s_1^2s_2^2s_1^{-2}s_2^2u_1&\subset&
		s_2s_1^3(s_1s_2^2s_1^{-1})s_1^{-1}s_2^2u_1
		+\grv (s_2s_1^{-2}s_2^{-1})s_2^3u_1\\&\subset&(s_2s_1^3s_2^{-1})s_1(s_1s_2s_1^{-1})s_2^2u_1+
		\grv s_1^{-2}(s_1s_2^{-2}s_1^{-1})s_1^2s_2^3u_1\\
		&\subset& s_1^{-1}s_2^3s_1^2s_2^{-1}s_1s_2^3u_1+\underline{s_1^{-2}\grv s_2^{-1}s_1^{-2}s_2s_1^2s_2^3u_1}
		\end{array}}$
		
	Therefore, we have to prove that the element $s_2^3s_1^2s_2^{-1}s_1s_2^3$ is inside $U'''$. For this purpose, we expand $s_2^3$ as a linear combination  of 
		$s_2^2$, $s_2$, 1 $s_2^{-1}$ and $s_2^{-2}$ and we have:

		$\small{\begin{array}[t]{lcl}
			s_2^3s_1^2s_2^{-1}s_1s_2^3
			&\in&
			R_5s_2^3s_1^{3}(s_1^{-1}s_2^{-1}s_1)s_2^2+\underline{R_5s_2^3s_1^2(s_2^{-1}s_1s_2)}+\underline
			{R_5s_2^3s_1^2s_2^{-1}}+R_5s_2^3s_1^2s_2^{-1}s_1s_2^{-1}+\\&&+
			R_5s_1^{-1}(s_1s_2^2s_1^{-1})s_1(s_2s_1^2s_2^{-1})s_1s_2^{-2}\\
			&\in&u_2u_1s_2s_1^{-1}s_2+u_2u_1s_2^{-1}s_1s_2^{-1}u_1+u_1s_2^{-1}s_1^2s_2^3s_1^2s_2^{-2}+U'''.
			
			\end{array}}$
			
	However, by proposition 	\ref{l2} we have that 	$u_2u_1s_2s_1^{-1}s_2$ and $u_2u_1s_2^{-1}s_1s_2^{-1}$ are subsets of $U'''$. Therefore, we only need to prove that the element $s_2^{-1}s_1^2s_2^3s_1^2s_2^{-2}$ is inside $U'''$. We expand $s_2^3$ as a linear combination  of 
	$s_2^2$, $s_2$, 1 $s_2^{-1}$ and $s_2^{-2}$ and we have that $s_2^{-1}s_1^2s_2^3s_1^2s_2^{-2}\in \grF(s_2V)+\underline{s_2^{-1}(s_1^2s_2s_1)s_1s_2^{-2}}+      \grF(s_2u_1s_2s_1^{-2}s_2^2)+R_5s_2^{-1}s_1^{2}s_2^{-2}s_1^{2}s_2^{-2}$. However, by the definition of $U'''$ we have that $s_2^{-1}s_1^{2}s_2^{-2}s_1^{2}s_2^{-2}\in U'''$.
			Moreover, by relation (\ref{7}) and by the previous case (case C1) we have that $\grF(s_2V)+\grF(s_2u_1s_2s_1^{-2}s_2^2)\subset \grF(U''')\stackrel{\ref{r1}}\subset U'''.$ 
			
		\item[C3.] We will prove that $s_2u_1s_2^2s_1^{2}s_2^2\subset U'''$. For this purpose, we expand $u_1$ as $R_5+R_5s_1+R_5s_1^{-1}+
		R_5s_1^2+R_5s_1^3$ and we have $s_2u_1s_2^2s_1^{2}s_2^2\subset \underline{s_2^3s_1^2s_2^2u_1}+\underline{(s_2s_1s_2^2)s_1^2s_2^2u_1}+ s_2s_1^{-1}s_2^2s_1^{2}s_2^2u_1+s_2s_1^{2}s_2^2s_1^2s_2^2u_1+s_2s_1^3s_2^2s_1^{2}s_2^2$. However, be lemma \ref{cc} we have that $s_2s_1^3s_2^2s_1^{2}s_2^2\subset u_1\grv^3+U''\subset U'''$. Therefore, it remains to  prove that $s_2s_1^{-1}s_2^2s_1^{2}s_2^2u_1+s_2s_1^{2}s_2^2s_1^2s_2^2u_1\subset U'''$. We have: 
		
		$\small{\begin{array}[t]{lcl}s_2s_1^{-1}s_2^2s_1^{2}s_2^2u_1+s_2s_1^{2}s_2^2s_1^2s_2^2u_1&=&
		s_2^2(s_2^{-1}s_1^{-1}s_2)s_2s_1^{2}s_2^2u_1+s_1^{-1}s_1\grv^2s_2\\
		&=&s_2^2s_1(s_2^{-1}s_1^{-1}s_2)s_1^{2}s_2^2u_1+
		s_1^{-1}\grv^2s_1s_2\\
		&=&s_2^2s_1^2(s_2^{-1}s_1s_2)s_2u_1+s_1^{-1}s_2s_1^2s_2^2s_1^2(s_2s_1s_2)\\
		&\subset& u_2u_1s_2s_1^{-1}s_2u_1+u_1s_2s_1^2s_2^2s_1^3s_2u_1.
		
		\end{array}}$.
	
	By lemma \ref{l2}(ii) it will be sufficient to prove that $s_2s_1^2s_2^2s_1^3s_2\in U'''$. We expand $s_2^3$ as a linear combination  of 
	$s_2^2$, $s_2$, 1 $s_2^{-1}$ and $s_2^{-2}$ and we have: 
		
		$\small{\begin{array}[t]{lcl}
		s_2s_1^2s_2^2s_1^3s_2&\in& R_5\grv^2+\underline{R_5s_2s_1^2(s_2^2s_1s_2)}+\underline{R_5s_2s_1^2s_2^2}+
		R_5s_2s_1(s_1s_2^2s_1^{-1})s_2+R_5s_2s_1^2s_2^2s_1^{-2}s_2\\
		
			&\in&\underline{R_5(s_2s_1s_2^{-1})s_1^2s_2^2}+R_5s_1^{-1}(s_1s_2s_1^{2})s_2^2s_1^{-2}s_2+U'''\\
			&\in&u_1s_2^2(s_1s_2^3s_1^{-1})s_1^{-1}s_2+U'''\\
			&\in& u_1u_2u_1s_2s_1^{-1}s_2u_1+U'''.
			\end{array}}$
		
		The result follows from proposition \ref{l2}(ii).
		\item[C4.] We will prove that $s_2u_1s_2^{-2}s_1^{2}s_2^{-2}u_1\subset U'''$. Since $s_2u_1s_2^{-2}s_1^{2}s_2^{-2}u_1=(s_2u_1s_2^{-1})s_2^{-1}s_1^{2}s_2^{-2}u_1=s_1^{-1}u_2s_1s_2^{-1}s_1^2s_2^{-2}u_1$, it will be sufficient to prove that $u_2s_1s_2^{-1}s_1^2s_2^{-2}\subset U'''$. We expand $u_2$ as $R_5+R_5s_2+R_5s_2^{-1}+R_5s_2^2+R_5s_2^3$ and we have:  $u_2s_1s_2^{-1}s_1^2s_2^{-2}\subset \underline{R_5s_1s_2^{-1}s_1^2s_2^{-2}}+\underline{R_5(s_2s_1s_2^{-1})s_1^2s_2^{-2}}+\grF(u_1s_2u_1s_2s_1^{-2}s_2^{2})+R_5s_2^2s_1s_2^{-1}s_1^2s_2^{-2}+
		R_5s_2^3s_1s_2^{-1}s_1^2s_2^{-2}$.
		By the first case (case C1) we have that $\grF(u_1s_2u_1s_2s_1^{-2}s_2^{2})
		\subset u_1\grF(U''')u_1\stackrel{\ref{r1}}{\subset} U'''$. It remains to prove that the elements $s_2^2s_1s_2^{-1}s_1^2s_2^{-2}$ and $s_2^3s_1s_2^{-1}s_1^2s_2^{-2}$ are inside $U'''$. We have:
		$s_2^2s_1s_2^{-1}s_1^2s_2^{-2}=s_2(s_2s_1s_2^{-1})s_1^2s_2^{-2}=s_2s_1^{-1}(s_2s_1^3s_2^{-1})s_2^{-1}=s_2s_1^{-2}s_2^2(s_2s_1s_2^{-1})=s_1^{-1}(s_1s_2s_1^{-1})s_1^{-1}s_2^2s_1^{-1}s_2s_1=s_1^{-1}s_2^{-1}(s_1s_2s_1^{-1})s_2^2s_1^{-1}s_2s_1=\underline{s_1^{-1}s_2^{-2}(s_1s_2^3s_1^{-1})s_2s_1}$. By using analogous calculations we have 
		$s_2^3s_1s_2^{-1}s_1^2s_2^{-2}=s_2^2(s_2s_1s_2^{-1})s_1(s_1s_2^{-2}s_1^{-1})s_1=s_2^2s_1^{-2}(s_1s_2s_1^2)s_2^{-1}s_1^{-2}s_2s_1\in s_2^2s_1^{-2}s_2^2s_1^{-1}s_2u_1$.
		We expand $s_1^{-2}$ as a linear combination  of 
		$s_1^{-1}$, $1$, $s_1$, $s_2^{2}$ and $s_2^{3}$ and we have:
		
		$\small{\begin{array}[t]{lcl}s_2^2s_1^{-2}s_2^2s_1^{-1}s_2&\in& R_5s_2^2s_1^{-1}s_2^2s_1^{-1}s_2+\underline{R_5s_2^4s_1^{-1}s_2}+
		\underline{R_5s_2(s_2s_1s_2^2)s_1^{-1}s_2}+
		R_5s_2^2s_1^{2}s_2^2s_1^{-1}s_2+\\&&+
	R_5s_2^2s_1^{3}s_2^2s_1^{-1}s_2\\
	&	\in& R_5s_2^3(s_2^{-1}s_1^{-1}s_2)s_2s_1^{-1}s_2+R_5
	s_2^2s_1(s_1s_2^{2}s_1^{-1})s_2
		+R_5s_2^2s_1^2(s_1s_2^2s_1^{-1})s_2+U'''\\
		&\in& R_5s_2^3s_1(s_2^{-1}s_1^{-1}s_2)s_1^{-1}s_2+
		R_5s_2(s_2s_1s_2^{-1})s_1^{2}s_2^{2}
		+R_5s_2(s_2s_1^2s_2^{-1})s_1^2s_2^2+U'''\\
		&\in&\underline{R_5s_2^3s_1^2(s_2^{-1}s_1^{-2}s_2)}+R_5s_2s_1^{-1}s_2s_1^{3}s_2^{2}+
		R_5s_2s_1^{-1}s_2^2s_1^3s_2^2+U'''
		\end{array}}$
	
	Therefore, it remains to prove that $B:=R_5s_2s_1^{-1}s_2s_1^{3}s_2^{2}+
	R_5s_2s_1^{-1}s_2^2s_1^3s_2^2\subset U'''$. We expand  $s_1^3$ as a linear combination  of 
	$s_1^2$, $s_1$, 1 $s_1^{-1}$ and $s_1^{-2}$ and we have that $B\subset R_5s_2s_1^{-1}s_2(R_5s_1^2+R_5s_1+R_5+R_5s_1^{-1}+R_5s_1^{-2})s_2^{2}+
	R_5s_2s_1^{-1}s_2^2(R_5s_1^2+R_5s_1+R_5+R_5s_1^{-1}+R_5s_1^{-2})s_2^2$. 
By cases C1, C2 and C3 we have: 
	
$\small{\begin{array}[t]{lcl}
			 B&\subset&
		R_5s_2s_1^{-1}\grv s_2+\underline{R_5s_2s_1^{-1}(s_2s_1s_2^2)}+\underline{R_5s_2s_1^{-1}s_2^3u_1}+
			R_5s_2s_1^{-1}s_2s_1^{-1}s_2^2+\underline{R_5s_2s_1^{-1}(s_2^2s_1s_2)s_2}+\\&&+\underline{R_5s_2s_1^{-1}s_2^4}+
			R_5s_2s_1^{-1}s_2^2s_1^{-1}s_2^2+U'''\\
			&\subset&R_5s_2\grv s_1^{-1}s_2+
			R_5s_2s_1^{-1}s_2s_1^{-1}s_2^2+
			R_5s_2s_1^{-1}s_2^2s_1^{-1}s_2^2+U'''\\
			&\subset&u_2u_1s_2s_1^{-1}s_2+R_5s_2^2(s_2^{-1}s_1^{-1}s_2)s_1^{-1}s_2^2+
			R_5s_2s_1^{-2}(s_1s_2^2s_1^{-1})s_2^2+U'''\\
			&\stackrel{\ref{l2}}{\subset}&R_5s_2^2s_1(s_2^{-1}s_1^{-2}s_2)s_2+
			R_5(s_2s_1^{-2}s_2^{-1})s_1^2s_2^3+U'''\\
			&\subset&R_5s_2^2s_1^{2}s_2^{-1}(s_2^{-1}s_1^{-1}s_2)+U''' \\&\subset& u_1u_2u_1s_2^{-1}s_1s_2^{-1}+U'''
			.
			\end{array}}$
		
		The result follows from proposition \ref{l2}(ii).
		\item[C5.] We 
	will prove that $s_2u_1s_2^2s_1^2s_2^{-2}u_1\subset U'''$. For this purpose, we use straight-forward calculations and we have
	 	$s_2u_1s_2^2s_1^2s_2^{-2}=(s_2u_1s_2^{-1})s_2^2(s_2s_1^2s_2^{-1})s_2^{-1}
		=s_1^{-1}u_2(s_1s_2^2s_1^{-1})s_2(s_2s_1s_2^{-1})=
		s_1^{-1}u_2s_1(s_1s_2^2s_1^{-1})s_2s_1=
		s_1^{-1}u_2(s_2s_1s_2^{-1})s_1^2s_2^2s_1=s_1^{-2}(s_1u_2s_1^{-1})s_2s_1^3s_2^2s_1=s_1^{-2}s_2^{-1}u_1s_2^2s_1^3s_2^2s_1$, meaning that $s_2u_1s_2^2s_1^2s_2^{-2}u_1
		\subset u_1s_2^{-1}u_1s_2^2s_1^3s_2^2u_1$.
		Hence, we have to prove that $s_2^{-1}u_1s_2^2s_1^3s_2^2\subset U'''$.
		For this purpose, we expand  $s_1^3$ as a linear combination  of 
		$s_1^2$, $s_1$, 1 $s_1^{-1}$ and $s_1^{-2}$ and we have that $s_2^{-1}u_1s_2^2s_1^3s_2^2\subset \grF(s_2V+s_2u_1s_2^{-2}s_1^2s_2^{-2})+s_2^{-1}u_1s_2^2s_1s_2^2+
		s_2^{-1}u_1s_2^2s_1^{-1}s_2^2$.
		By relation (\ref{7}) and case C4 we have that $\grF(s_2V+s_2u_1s_2^{-2}s_1^2s_2^{-2})\subset \grF(U''')\stackrel{\ref{r1}}{\subset} U'''$. Moreover, 
		$s_2^{-1}u_1s_2^2s_1s_2^2=s_2^{-1}u_1(s_2^2s_1s_2)s_2=s_2^{-1}u_1\grv=\underline{s_2^{-1}\grv u_1}.$ It remains to prove that $s_2^{-1}u_1s_2^2s_1^{-1}s_2^2\subset U'''$. We have:
		$s_2^{-1}u_1s_2^2s_1^{-1}s_2^2=(s_2^{-1}u_1s_2)s_2s_1^{-1}s_2^2=s_1u_2(s_1^{-1}s_2s_1)s_1^{-2}s_2^2=s_1u_2s_1(s_2^{-1}s_1^{-2}s_2)s_2\subset u_1u_2u_1s_2^{-1}s_1s_2^{-1}u_1$.
		The result follows from proposition \ref{l2}(i).
		\qedhere
	\end{itemize}
\end{proof}
From now on we will double-underline the elements of the form $u_1s_2^{\pm}u_1u_2u_1u_2u_1$ and  we will use the fact that they are elements of $U'''$ (proposition \ref{p2}) without mentioning it.

We can now prove the following lemma that helps us to ``replace''  inside the definition of $U''''$ the element $\grv^4$ by the element $s_2^{-2}s_1^2s_2^2s_1^3s_2^2$ modulo $U'''$.  
\begin{lem}  $s_2^{-2}s_1^2s_2^2s_1^3s_2^2\in u_1\grv^3+u_1^{\times}\grv^4+u_1s_2u_1u_2u_1u_2u_1\subset U''''.$
	\label{ll}
\end{lem}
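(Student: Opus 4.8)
The plan is to follow the pattern of the proof of Lemma \ref{cc}. The inclusion $u_1\grv^3+u_1^{\times}\grv^4+u_1s_2u_1u_2u_1u_2u_1\subset U''''$ is immediate from the definitions: $u_1\grv^3\subset U'''\subset U''''$ and $u_1^{\times}\grv^4\subset u_1\grv^4\subset U''''$, while $u_1s_2u_1u_2u_1u_2u_1=u_1(s_2u_1u_2u_1u_2)u_1\subset u_1U'''u_1=U'''$ by Proposition \ref{p2}$(i)$ together with the fact that $U'''$ is a $u_1$-bi-module. So the content of the lemma is to show that $s_2^{-2}s_1^2s_2^2s_1^3s_2^2$ lies in $u_1\grv^3+u_1^{\times}\grv^4+u_1s_2u_1u_2u_1u_2u_1$.

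For this I would proceed by direct braid-word manipulation, exactly in the style of Lemma \ref{cc}, using throughout the generalized braid relations (Lemma \ref{lem1}), the centrality relation $x\grv^m=\grv^mx$ for $x\in u_1$, and, crucially, the identity $\grv=s_2s_1^2s_2$ in its various forms ($s_1^2=s_2^{-1}\grv s_2^{-1}$, $s_2s_1^2=\grv s_2^{-1}$, $s_1^2s_2=s_2^{-1}\grv$), which lets one substitute a copy of $\grv$ whenever an $s_2s_1^2s_2$-pattern is created. The target shape is an expression $s_2^{-j}\,p\,\grv^3\,q$ with $p,q\in u_1$ and $j\geq 1$; one then expands the outer negative power $s_2^{-j}$ (or, if more convenient, an isolated power of $s_1$) as a linear combination of lower powers via relations (\ref{two}) and (\ref{one}), in which the coefficient of the highest power occurring is invertible. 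The term carrying that highest power should, after one further application of the $\grv=s_2s_1^2s_2$ substitution, collapse to $p'\grv^4q'$ with $p',q'\in u_1$, giving the $u_1^{\times}\grv^4$ contribution; each of the remaining terms has strictly smaller total $s_2$-exponent, so it either still displays the explicit factor $\grv^3$ (and hence lies in $u_1\grv^3$, since $\grv^3$ commutes with $u_1$) or reduces, via Propositions \ref{p1} and \ref{p2}, Lemma \ref{cc}, and the definitions of $U'$ and $U''$, to an element of $u_1s_2u_1u_2u_1u_2u_1$.

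The main obstacle is the bookkeeping in this second step: the sequence of braid moves must be chosen so that (a) $\grv^3$ ends up flanked only by elements of $u_1$, since otherwise the commutation $x\grv^m=\grv^mx$ is of no use, and (b) after the negative power is expanded, the dominant term genuinely rewrites as $\grv^4$ up to left and right multiplication by elements of $u_1$, rather than as $\grv^3$ times an uncontrolled tail. Arranging (b) is precisely what pins down the particular element $s_2^{-2}s_1^2s_2^2s_1^3s_2^2$ of the statement, just as in Lemma \ref{cc}; the individual moves are all elementary, but they must be assembled in exactly the right order, which makes the computation somewhat long.
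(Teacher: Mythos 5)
Your treatment of the inclusion $u_1\grv^3+u_1^{\times}\grv^4+u_1s_2u_1u_2u_1u_2u_1\subset U''''$ is correct and matches the paper. But for the substantive half of the lemma you have only described the \emph{shape} of an argument, not given one: you say the element should be manoeuvred into the form $s_2^{-j}\,p\,\grv^3\,q$, that expanding a negative power should make the dominant term ``collapse to $p'\grv^4q'$,'' and that the lower-order terms should reduce via the earlier propositions --- and you explicitly defer the bookkeeping. In a lemma whose entire content is one specific computation, this is a genuine gap: nothing in your sketch certifies that the moves can actually be assembled so that (a) the dominant term really produces $\grv^4$ rather than $\grv^3$ times an uncontrolled tail, and (b) every residual term lands in $u_1s_2u_1u_2u_1u_2u_1$ rather than in some piece of $U$ not yet known to absorb a left $s_2$. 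Point (b) is delicate here because Proposition \ref{xx} and Lemma \ref{lol} are proved \emph{after} this lemma, so you cannot freely dump leftovers into $U'''$ or $U''''$ and must keep them inside the specific module $u_1s_2u_1u_2u_1u_2u_1$ named in the statement.

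For comparison, the paper's computation first rewrites $s_2^{-2}s_1^2s_2^2s_1^3s_2^2$ into $u_1s_2s_1^{-2}s_2^{-3}s_1^2s_2^3s_1^2s_2u_1$, expands $s_2^{-3}$ (invertible top coefficient on $s_2^2$), disposes of the four lower terms using Lemma \ref{oo}, and then, for the dominant term, expands $s_1^{-2}$ to reach $u_1^{\times}s_2s_1^3s_2^2s_1^2s_2^2\grv$. The $\grv^4$ does not appear by a direct $s_2s_1^2s_2$-substitution as you suggest; it appears because the prefix $s_2s_1^3s_2^2s_1^2s_2^2$ is exactly the element of Lemma \ref{cc}, whose decomposition contains $u_1^{\times}\grv^3$, and multiplying that decomposition on the right by $\grv$ yields $u_1^{\times}\grv^4$ plus terms that must then be pushed, by a further page of manipulation, into $u_1s_2u_1u_2u_1u_2u_1$. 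That reduction to Lemma \ref{cc} times $\grv$ is the pivot of the whole proof, and it is precisely the step your proposal does not supply.
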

\begin{proof} In this proof we will double-underline only the elements of the form $u_1s_2u_1u_2u_1u_2u_1$ (and not of the form $u_1s_2^{-1}u_1u_2u_1u_2u_1$ ). The fact that $u_1\grv^3+u_1^{\times}\grv^4+u_1s_2u_1u_2u_1u_2u_1$ is a subset of $ U''''$ follows from the definition of $U''''$ and proposition \ref{p2}. As a result, we restrict ourselves to proving that $s_2^{-2}s_1^2s_2^2s_1^3s_2^2\in u_1\grv^3+u_1^{\times}\grv^4+u_1s_2u_1u_2u_1u_2u_1$.
	We first notice that
	$$\small{\begin{array}{lcl}
	s_2^{-2}s_1^2s_2^2s_1^3s_2^2&=&s_1(s_1^{-1}s_2^{-2}s_1)s_2^{-2}(s_2^2s_1s_2)s_2s_1^2(s_1s_2^2s_1^{-1})s_1^{-1}s_1^2\\
	&=&
	s_1s_2s_1^{-2}s_2^{-3}s_1\grv s_1^2s_2^{-1}s_1(s_1s_2s_1^{-1})s_1^2\\
	&=&
	s_1s_2s_1^{-2}s_2^{-3}s_1^3(s_2s_1^3s_2^{-1})s_1s_2s_1^2\\
	&=&
	s_1s_2s_1^{-2}s_2^{-3}s_1^2s_2^3s_1^2s_2s_1^2\\
	&\in& u_1s_2s_1^{-2}s_2^{-3}s_1^2s_2^3s_1^2s_2u_1.
	\end{array}}$$
	We expand $s_2^{-3}$ as a linear combination of $s_2^{-2}$, $s_2^{-1}$, 1, $s_2$ and $s_2^2$, where the coefficient of $s_2^2$ is invertible, and we have:
	
	$$\small{\begin{array}[t]{lcl}
	s_2s_1^{-2}s_2^{-3}s_1^2s_2^3s_1^2s_2&\in& R_5s_2s_1^{-2}s_2^{-2}s_1^2s_2^3s_1^2s_2+R_5s_2s_1^{-2}s_2^{-1}s_1^2s_2^3s_1^2s_2+\underline{\underline{R_5s_2s_2^3s_1^2s_2}}+\\&&+R_5s_2s_1^{-2}s_2s_1^2s_2^3s_1^2s_2+u_1^{\times}s_2s_1^{-2}s_2^{2}s_1^2s_2^3s_1^2s_2u_1^{\times}.
		\end{array}}$$
However, we notice that $s_2s_1^{-2}s_2^{-2}s_1^2s_2^3s_1^2s_2=s_2s_1^{-1}(s_1^{-1}s_2^{-2}s_1)s_1s_2^3s_1^2s_2=s_2s_1^{-1}s_2^2\grv^{-1}s_1s_2^3s_1^2s_2=s_2s_1^{-1}s_2^2s_1\grv^{-1}s_2^3s_1^2s_2=s_2s_1^{-1}s_2^2s_1(s_2^{-1}s_1^{-2}s_2)\grv=s_2s_1^{-1}s_2^2s_1^2s_2^{-2}\grv s_1^{-1} =\underline{\underline{s_2s_1^{-1}s_2^2s_1^2(s_2^{-1}s_1^2s_2)}}s_1^{-1}$. Moreover, we have  $s_2s_1^{-2}s_2^{-1}s_1^2s_2^3s_1^2s_2=s_2s_1^{-2}(s_2^{-1}s_1^2s_2)s_2^2s_1^2s_2=s_2s_1^{-1}s_2^2(s_1^{-1}s_2^2s_1)s_2(s_2^{-1}s_1s_2)=\underline{\underline{s_2s_1^{-1}s_2^3(s_1^3s_2s_1)s_1^{-2}}}.$ We also have $s_2s_1^{-2}s_2s_1^2s_2^3s_1^2s_2=s_2s_1^{-3}(s_1s_2s_1^2)s_2^3s_1^2s_2=s_2s_1^{-3}s_2(s_2s_1s_2^4)s_1^2s_2\in s_2s_1^{-3}(s_2u_1s_2u_1s_2u_1).$ However, by lemma \ref{oo}(i) we have that $s_2s_1^{-3}(s_2u_1s_2u_1s_2u_1)\subset s_2s_1^{-3}(\grv^2u_1+u_1u_2u_1u_2u_1)\subset s_2\grv^2 u_1+ \underline{\underline{u_1s_2u_1u_2u_1u_2u_1}}$. By lemma \ref{oo}(ii) we  also have $s_2\grv^2 u_1\subset \underline{\underline{u_1s_2u_1u_2u_1u_2u_1}}$.

It remains to prove that $s_2s_1^{-2}s_2^{2}s_1^2s_2^3s_1^2s_2\in u_1\grv^3+u_1^{\times}\grv^4+u_1s_2u_1u_2u_1u_2u_1$. We have:
	$$\small{\begin{array}[t]{lcl}
		s_2s_1^{-2}s_2^{2}s_1^2s_2^3s_1^2s_2&=& 
		s_2(-de^{-1}s_1^{-1}-ce^{-1}-e^{-1}bs_1-e^{-1}as_1^2+e^{-1}s_1^3)s_2^{2}s_1^2s_2^3s_1^2s_2\\

		&\in&
		R_5s_2s_1^{-1}s_2^2s_1^2s_2^3s_1^2s_2+R_5s_2^3s_1^2s_2^3s_1^2s_2
		+\underline{\underline{R_5(s_2s_1s_2^2)s_1^2s_2^3s_1^2s_2u_1}}+R_5s_2s_1^2s_2^2s_1^2s_2^3s_1^2s_2+\\&&+
		u_1^{\times}s_2s_1^3s_2^2s_1^2s_2^2\grv.
		\end{array}}$$
	We first notice that we have  $s_2s_1^{-1}s_2^2s_1^2s_2^3s_1^2s_2=s_2(s_1^{-1}s_2^2s_1)s_1s_2^3s_1^2s_2=s_2^2s_1^2(s_2^{-1}s_1s_2)s_2^2s_1^2s_2=s_1(s_1^{-1}s_2^2s_1)s_1^2s_2(s_1^{-1}s_2^2s_1)(s_1s_2s_1)s_1^{-1}=s_1s_2s_1^2(s_2^{-1}s_1^2s_2)s_2s_1^3s_2s_1^{-1}=\underline{\underline{s_1s_2s_1^3s_2^3(s_2^{-1}s_1^{-1}s_2)(s_1^3s_2s_1)s_1^{-2}}}.$
	Moreover, we have that $s_2^3s_1^2s_2^3s_1^2s_2=s_1(s_1^{-1}s_2^3s_1)s_1s_2^3s_1^2s_2=s_1s_2s_1^3(s_2^{-1}s_1s_2)s_2^2s_1(s_1s_2s_1)s_1^{-1}=\underline{\underline{s_1s_2s_1^4s_2(s_1^{-1}s_2^2s_1)s_2s_1s_2s_1^{-1}}}.$ Using analogous calculations, we also have that $s_2s_1^2s_2^2s_1^2s_2^3s_1^2s_2=s_1^{-1}(s_1s_2s_1^2)s_2^2s_1^2s_2^3s_1^2s_2=
	s_1^{-1}s_2(s_2s_1s_2^3)s_1^2s_2^3s_1^2s_2=s_1^{-1}s_2s_1^2(s_1s_2s_1^3)s_2^3s_1^2s_2=s_1^{-1}s_2s_1^2s_2^2(s_2s_1s_2^4)s_1^2s_2\in u_1\grv (s_2u_1s_2u_1s_2u_1).$ However, by lemma \ref{oo}(i) we have that $u_1\grv (s_2u_1s_2u_1s_2u_1)\subset u_1\grv(\grv^2u_1+u_1u_2u_1u_2u_1)\subset u_1\grv^3+\underline{\underline{u_1\grv u_2u_1u_2u_1}}.$ 
	
	In order to finish the proof, it remains to prove that $s_2s_1^3s_2^2s_1^2s_2^2\grv\in u_1^{\times}\grv^4+u_1s_2u_1u_2u_1u_2u_1$. 
	We use lemma \ref{cc} and we have: \\\\
		$\small{\begin{array}[t]{lcl}
s_2s_1^3s_2^2s_1^2s_2^2\grv&\in&
	u_1^{\times}(u_1s_2u_1s_2s_1^3s_2+u_1s_2^2s_1^3s_2s_1^{-1}s_2+
		u_1u_2u_1u_2u_1+u_1^{\times}\grv^3)\grv\\
		&\in&
		u_1s_2u_1s_2s_1^4(s_1^{-1}s_2^2s_1)s_1s_2+u_1s_2^2s_1^3s_2(s_1^{-1}s_2s_1)s_1^{-1}\grv +u_1^{\times}\grv^4\\

		&\in&
		u_1s_2u_1s_2s_1^4s_2s_1^2(s_2^{-1}s_1s_2)+u_1s_2^2s_1^3s_2^2s_1s_2^{-1}\grv +u_1^{\times}\grv^4\\

		&\in&
		u_1s_2u_1s_2s_1^4s_2s_1^3s_2u_1+u_1s_2^2s_1^3s_2^2s_1^3s_2+u_1^{\times}\grv^4\\
		
		&\in&
		u_1s_2u_1(s_2u_1s_2u_1s_2u_1)+u_1(s_1^{-1}s_2^2s_1)s_1^2s_2^2s_1^3s_2+
		u_1^{\times}\grv^4\\
		&\stackrel{\ref{oo}(i)}{\in}& u_1s_2u_1(\grv^2u_1+u_1u_2u_1u_2u_1)+
		u_1s_2s_1^2(s_2^{-1}s_1^2s_2)s_2s_1^3s_2+u_1^{\times}\grv^4\\

		&\in&
		u_1s_2\grv^2u_1+u_1s_2u_1u_2u_1u_2u_1+ u_1s_2s_1^3s_2^2(s_1^{-1}s_2s_1)s_1^2s_2+u_1^{\times}\grv^4\\
		&\stackrel{\ref{oo}(ii)}{\in}&u_1s_2u_1u_2u_1u_2u_1+\underline{\underline{u_1s_2s_1^3s_2^3s_1(s_2^{-1}s_1^2s_2)}}+u_1^{\times}\grv^4
		.
		\end{array}}$\\

\end{proof}
\begin{prop}\mbox{}  
	\vspace*{-\parsep}
	\vspace*{-\baselineskip}\\ 
	\begin{itemize}[leftmargin=0.6cm]
		\item [(i)]$s_2u_1u_2u_1s_2s_1^{-1}s_2\subset U''''.$
		\item[(ii)]$s_2u_1u_2u_1s_2^{-1}s_1s_2^{-1}\subset U''''.$
		\item[(iii)]$s_2u_1u_2u_1u_2\grv \subset U''''$.
	\end{itemize}
	\label{xx}
\end{prop}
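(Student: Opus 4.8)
The plan is to treat the three statements in turn, using $\grF$ to reduce (ii) to (i) exactly as was done in Propositions \ref{l2} and \ref{p2}: since $\grF$ stabilizes $u_1$, swaps $s_2$ with $s_2^{-1}$ and (by Lemma \ref{r1}) stabilizes $U''''$, applying $\grF$ to the inclusion $s_2u_1u_2u_1s_2s_1^{-1}s_2\subset U''''$ yields $s_2^{-1}u_1u_2u_1s_2^{-1}s_1s_2^{-1}\subset U''''$; then rewriting $s_2^{-1}u_1u_2u_1s_2^{-1}s_1s_2^{-1}$ via the generalized braid relations into $s_2u_1u_2u_1s_2^{-1}s_1s_2^{-1}$ up to elements already known to lie in $U''''$ gives (ii). So the real content is in (i) and (iii).

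For (i) I would first use $u_2u_1s_2s_1^{-1}s_2\subset u_1\grv^2+R_5 s_2^2s_1^{-2}s_2s_1^{-1}s_2+u_1u_2u_1u_2u_1$ from Proposition \ref{l2}(ii), so that $s_2u_1u_2u_1s_2s_1^{-1}s_2$ reduces, modulo the double-underlined elements of $U'''$ (Proposition \ref{p2}), to handling $s_2u_1\grv^2$ and $s_2u_1s_2^2s_1^{-2}s_2s_1^{-1}s_2$. The first is already in $U''$ by Lemma \ref{oo}(ii). For the second, I would expand the left copy of $u_1$ as an $R_5$-combination of $1,s_1,s_1^{-1},s_1^2,s_1^3$ and push the outer $s_2$ inward using Lemma \ref{lem1} and the braid relation; the monomials $s_2s_1^k s_2^2 s_1^{-2}s_2 s_1^{-1}s_2$ should, after the standard manipulations (moving powers of $s_1$ across $\grv$, collapsing $s_2s_1s_2^{-1}=s_1^{-1}s_2s_1$, etc.), collapse into terms of the shape $u_1s_2u_1u_2u_1u_2u_1$ — i.e. into $U'''$ by Proposition \ref{p2} — or into $u_1\grv^3+U''$, which lies in $U''''$ by Lemma \ref{ll}'s surrounding bookkeeping (or directly by the definition of $U'''$). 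The output of each reduction should be one of: a double-underlined length-$5$-times-$s_2$ term, a term $u_1\grv^3u_1$, or (at worst) an $s_2\grv^2$-type term handled by Lemma \ref{oo}(ii).

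For (iii) I would write $u_2\grv=u_2s_2s_1^2s_2\subset u_1u_2u_1u_2u_1$ after absorbing $\grv$ on the right — more precisely $u_1u_2u_1u_2\grv$ has the form $u_1u_2u_1u_2s_2s_1^2s_2$, and the point is to move one more $s_2$ to the left: $s_2u_1u_2u_1u_2\grv$ first becomes $s_2u_1u_2u_1u_2u_1\cdot\grv$ (length-five-in-$s_2$ times $\grv$), then one commutes $\grv$ leftwards across $u_1$ (using $x\grv=\grv x$ for $x\in u_1$, noted in Section \ref{s}) and uses $s_2 u_1 s_2 u_1 s_2 u_1\subset \grv^2 u_1+u_1u_2u_1u_2u_1$ from Lemma \ref{oo}(i); the worst surviving term is then $\grv^2 u_1\grv=\grv^2 u_1 s_2s_1^2s_2$, which one rewrites as $u_1\grv^2 s_2 s_1^2 s_2$, and since $\grv^2 s_2 = s_2 \grv^2$ this is of the form $u_1 s_2\grv^2 u_1$, handled by Lemma \ref{oo}(ii) (giving $U''$), plus a $\grv^3$-term lying in $U'''$.

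The main obstacle I anticipate is bookkeeping rather than genuine difficulty: in (i) the element $s_2s_1^3 s_2^2 s_1^{-2}s_2 s_1^{-1}s_2$ is a length-nine word, and one must be disciplined about which of the many intermediate monomials land in $u_1u_2u_1u_2u_1$, which produce a $\grv^{\pm 2}$ or $\grv^{\pm 3}$ that is already absorbed in $U''$ or $U'''$, and which require a further application of Lemma \ref{oo} or Lemma \ref{ll}. The key discipline — mirroring the rest of this section — is to always reduce an $s_2$-word of length five to the double-underlined form $u_1s_2^{\pm}u_1u_2u_1u_2u_1\subset U'''$, and to recognize a $\grv^k$ the moment $s_2s_1^2s_2$ appears as a factor; once those two moves are applied systematically, no new element types should arise beyond those already catalogued in the definition of $U''''$.
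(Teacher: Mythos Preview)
Your outline for (i) is essentially the paper's own argument: apply Proposition~\ref{l2}(ii), kill $s_2u_1\grv^2$ via Lemma~\ref{oo}(ii), expand $u_1$ and reduce the five monomials $s_2s_1^k s_2^2s_1^{-2}s_2s_1^{-1}s_2$ by braid-moves, with the hardest case $k=3$ eventually landing in $u_1s_2^{-2}s_1^2s_2^2s_1^3s_2^2$ and invoking Lemma~\ref{ll}. That part is fine.

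The reduction of (ii) to (i) via $\grF$ is a genuine gap. Applying $\grF$ to (i) gives $s_2^{-1}u_1u_2u_1s_2^{-1}s_1s_2^{-1}\subset U''''$, with $s_2^{-1}$ on the left, not $s_2$. Your claim that one can then ``rewrite $s_2^{-1}u_1u_2u_1s_2^{-1}s_1s_2^{-1}$ via the generalized braid relations into $s_2u_1u_2u_1s_2^{-1}s_1s_2^{-1}$ up to elements already in $U''''$'' is not justified: the generalized braid relations shuffle inner factors but do not convert a leading $s_2^{-1}$ into a leading $s_2$, and expanding $s_2$ in the $u_2$-basis $\{s_2^{-1},1,s_2^{-2},s_2^2,\dots\}$ would produce terms like $s_2^{-2}u_1u_2u_1s_2^{-1}s_1s_2^{-1}$ for which no prior result applies. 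The paper does \emph{not} deduce (ii) from (i); it runs a separate direct computation, using Proposition~\ref{l2}(i) to reduce to $s_2u_1s_2^{-2}s_1^{2}s_2^{-1}s_1s_2^{-1}$ and then expanding $u_1$ and simplifying each of the five resulting length-seven words individually.

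Your sketch for (iii) also has a gap: Lemma~\ref{oo}(i) controls $s_2u_1s_2u_1s_2u_1$, with each factor a single positive $s_2$, whereas $s_2u_1u_2u_1u_2\grv$ involves full $u_2$'s (all powers of $s_2$), so the lemma does not apply to the whole expression as you suggest. The paper instead first absorbs the rightmost $s_2$ of $\grv$ into the adjacent $u_2$, rewrites $s_2^{-1}s_1^2s_2=s_1s_2^2s_1^{-1}$ to get $s_2u_1u_2u_1u_2s_1s_2^2u_1$, and then expands the \emph{middle} $u_2$; crucially, several of the resulting terms are handled by invoking part (i) itself (e.g.\ the $s_2u_1u_2u_1s_2s_1^{-1}s_2$ shape reappears), and the remainder is pushed down to $s_2u_1u_2\grv u_1s_2u_1$ and then to $s_2u_1s_2u_1\grv u_1=s_2u_1s_2\grv u_1\subset U'''$. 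So (iii) genuinely depends on (i), and the reduction is by expanding $u_2$, not by a global application of Lemma~\ref{oo}(i).
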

\begin{proof}\mbox{} 
	\vspace*{-\parsep}
	\vspace*{-\baselineskip}\\
	\begin{itemize}[leftmargin=0.6cm]

	\item[(i)] By proposition \ref{l2}(ii) we have  $	s_2u_1u_2u_1s_2s_1^{-1}s_2\subset s_2u_1(u_1\grv^{2}+R_5s_2^2s_1^{-2}s_2s_1^{-1}s_2+u_1u_2u_1u_2u_1)$ and, hence, by lemma \ref{oo}(ii) we have  $s_2u_1u_2u_1s_2s_1^{-1}s_2\subset s_2u_1s_2^2s_1^{-2}s_2s_1^{-1}s_2+\underline{\underline{s_2u_1u_2u_1u_2u_1}}+U''''.$ As a result, we must prove that $s_2u_1s_2^2s_1^{-2}s_2s_1^{-1}s_2\subset U''''$. For this purpose, we expand $u_1$ as $R_5+R_5s_1+R_5s_1^{-1}+R_5s_1^2+R_5s_1^3$ and we have:
	
	$\begin{array}{lcl}		s_2u_1s_2^2s_1^{-2}s_2s_1^{-1}s_2&\subset& u_2u_1s_2s_1^{-1}s_2+\underline{\underline{
			R_5(s_2s_1s_2^2)s_1^{-2}s_2s_1^{-1}s_2}}+R_5s_2s_1^{-1}s_2^2s_1^{-2}s_2s_1^{-1}s_2
	+\\&&+R_5s_2s_1^2s_2^2s_1^{-2}s_2s_1^{-1}s_2+R_5s_2s_1^3s_2^2s_1^{-2}s_2s_1^{-1}s_2.
	\end{array}$
	
	By proposition \ref{l2}(ii) we have that $u_2u_1s_2s_1^{-1}s_2\subset U''''$. Moreover, $s_2s_1^2s_2^2s_1^{-2}s_2s_1^{-1}s_2=s_1^{-1}(s_1s_2s_1^2)s_2^2s_1^{-3}(s_1s_2s_1^{-1})s_2=s_1^{-1}s_2(s_2s_1s_2^3)s_1^{-3}s_2^{-1}s_1s_2^2=
	\underline{\underline{s_1^{-1}s_2s_1^3(s_2s_1^{-2}s_2^{-1})s_1s_2^2}}$. We also notice that $s_2s_1^3s_2^2s_1^{-2}s_2s_1^{-1}s_2=s_1^{-1}(s_1s_2s_1^3)s_2^2s_1^{-3}(s_1s_2s_1^{-1})s_2=s_1^{-1}s_2^3(s_1s_2^3s_1^{-1})s_1^{-2}s_2^{-1}s_1s_2^2=s_1^{-1}s_2^2s_1^3(s_2s_1^{-2}s_2^{-1})s_1s_2^2=s_1^{-2}(s_1s_2s_1^{-1})s_1(s_2s_1^2s_2^{-1})s_2^{-1}s_1^2s_2^2=s_1^{-2}s_2^{-1}(s_1s_2^3s_1^{-1})s_2^{-1}(s_2s_1^2s_2^{-1})s_1^2s_2^2
	\in u_1s_2^{-2}s_1^2s_2^2s_1^3s_2^2\stackrel{\ref{ll}}{\subset} U''''.$
	
		It remains to prove that the element $s_2s_1^{-1}s_2^2s_1^{-2}s_2s_1^{-1}s_2$ is inside $U''''$.
		We expand $s_2^2$ as a linear combination of $s_2$, 1, $s_2^{-1}$,  $s_2^{-2}$ and  $s_2^{-3}$ and we have
		
		$\small{\begin{array}[t]{lcl}
		s_2s_1^{-1}s_2^2s_1^{-2}s_2s_1^{-1}s_2&\in &	s_2s_1^{-1}(R_5s_2+R_5+R_5s_2^{-1}+R_5s_2^{-2}+R_5s_2^{-3})s_1^{-2}s_2s_1^{-1}s_2\\

			&\in&R_5s_2s_1^{-2}(s_1s_2s_1^{-1})s_1^{-1}(s_2s_1^{-1}s_2^{-1})s_2^2+
			\underline{\underline{R_5s_2s_1^{-3}s_2s_1^{-1}s_2}}+\\&&+\underline{\underline{
					R_5(s_2s_1^{-1}s_2^{-1})s_1^{-2}s_2s_1^{-1}s_2}}+\underline{\underline{
					R_5(s_2s_1^{-1}s_2^{-1})(s_2^{-1}s_1^{-2}s_2)s_1^{-1}s_2}}+\\&&+R_5(s_2s_1^{-1}s_2^{-1})s_2^{-1}(s_2^{-1}s_1^{-2}s_2)s_1^{-1}s_2\\

			&\in&
			\underline{\underline{R_5s_2s_1^{-2}s_2^{-1}s_1(s_2s_1^{-2}s_2^{-1})s_1s_2^2}}+
			R_5s_1^{-1}s_2^{-1}s_1^2(s_1^{-1}s_2^{-1}s_1)s_2^{-2}s_1^{-2}s_2
			+U''''\\
			&\in&R_5s_1^{-1}s_2^{-1}s_1^2s_2(s_1^{-1}s_2^{-3}s_1)s_1^{-3}s_2+
		U''''\\

			&\in&\underline{\underline{R_5s_1^{-1}s_2^{-1}s_1^2s_2^2s_1^{-3}(s_2^{-1}s_1^{-3}s_2)}}+U''''.
			\end{array}}$
		\item[(ii)]By proposition \ref{l2} (i)  we have that $s_2u_1(u_2u_1s_2^{-1}s_1s_2^{-1})\subset s_2u_1(u_1\grv^{-2}+R_5s_2^{-2}s_1^{2}s_2^{-1}s_1s_2^{-1}+u_1u_2u_1u_2u_1)
		\subset\underline{\underline{s_2\grv^{-2}u_1}}+
		s_2u_1s_2^2s_1^{-2}s_2s_1^{-1}s_2+\underline{\underline{s_2u_1u_2u_1u_2u_1}}$. Therefore, it remains to prove that  $s_2u_1s_2^2s_1^{-2}s_2s_1^{-1}s_2 \subset U''''$. We expand $u_1$ as $R_5+R_5s_1+R_5s_1^{-1}+R_5s_1^2+R_5s_1^{3}$ and we have:
		
		$\small{\begin{array}[t]{lcl}

		s_2u_1s_2^2s_1^{-2}s_2s_1^{-1}s_2	&\subset&
			\underline{\underline{R_5s_2^{-1}s_1^2s_2^{-1}s_1s_2^{-1}}}+\underline{\underline{R_5s_1^{-2}(s_1^2s_2s_1)s_2^{-2}s_1^2s_2^{-1}s_1s_2^{-1}}}+\\&&
			+R_5(s_2s_1^{-1}s_2^{-1})(s_2^{-1}s_1^2s_2)s_2^{-2}s_1s_2^{-1}+R_5(s_2s_1^2s_2^{-1})s_2^{-2}(s_2s_1^2s_2^{-1})s_1s_2^{-1}+\\&&+R_5(s_2s_1^3s_2^{-1})s_2^{-1}s_1^2s_2^{-1}(s_1s_2s_1^{-1})s_1\\
			&\subset&\underline{\underline{R_5s_1^{-1}s_2^{-1}s_1^2s_2^2(s_1^{-1}s_2^{-2}s_1)s_2^{-1}}}+
			\underline{\underline{R_5s_1^{-1}s_2^2(s_1s_2^{-2}s_1^{-1})s_2(s_2s_1s_2^{-1})}}+\\
			&&+R_5s_1^{-1}s_2^2(s_2s_1s_2^{-1})s_1(s_1s_2^{-2}s_1^{-1})s_2s_1+U''''\\
			&\subset&R_5s_1^{-1}s_2^2s_1^{-1}(s_2s_1^{2}s_2^{-1})s_1^{-2}s_2^2s_1+U''''\\
			&\subset&R_5s_1^{-1}s_2^2s_1^{-3}(s_1s_2^2s_1^{-1})s_2^2s_1+U''''\\
			&\subset& \underline{\underline{R_5s_1^{-1}s_2(s_2s_1^{-3}s_2^{-1})s_1^2s_2^3s_1}}+U''''.
			\end{array}}$
		\item[(iii)] We notice that $s_2u_1u_2u_1u_2\grv=s_2u_1u_2u_1u_2s_1^2s_2
		s_2u_1u_2u_1u_2(s_2^{-1}s_1^2s_2)
	\subset s_2u_1u_2u_1u_2s_1s_2^2u_1$. We expand $\bold{u_2}$ as $R_5+R_5s_2+R_5s_2^{-1}+R_5s_2^{2}+R_5s_2^{-2}$ and we have:

		$\small{\begin{array}[t]{lcl}

			s_2u_1u_2u_1\bold{u_2}s_1s_2^2u_1&\subset&\underline{\underline{s_2u_1u_2u_1s_2^2u_1}}+\underline{\underline{s_2u_1u_2u_1(s_2s_1s_2^2)u_1}}+
			s_2u_1u_2u_1(s_2^{-1}s_1s_2)s_2u_1+\\&&+
			s_2u_1u_2u_1s_2(s_2s_1s_2^2)u_1+s_2u_1u_2u_1s_2^{-1}(s_2^{-1}s_1s_2)s_2u_1\\
			&\subset&s_2u_1(u_2u_1s_2s_1^{-1}s_2u_1)+\underline{\underline{s_2u_1u_2u_1\grv u_1}}+s_2u_1u_2u_1(s_2^{-1}s_1s_2)s_1^{-1}s_2u_1+U''''\\
			&\stackrel{(i)}{\subset}&s_2u_1u_2u_1s_2s_1^{-2}s_2u_1+U''''.
			\end{array}}$
		
	It remains to prove that $s_2u_1u_2u_1s_2s_1^{-2}s_2u_1\subset U''''$. For this purpose, we expand $s_1^{-2}$ as a linear combination of $s_1^{-1}$, 1, $s_1$, $s_1^2$, $s_1^3$ and we have: $s_2u_1u_2u_1s_2s_1^{-2}s_2u_1	\subset s_2u_1u_2u_1s_2s_1^{-1}s_2u_1+\underline{\underline{s_2u_1u_2u_1s_2^2u_1}}+
	\underline{\underline{s_2u_1u_2u_1(s_2s_1s_2)u_1}}+s_2u_1u_2u_1\grv u_1+s_2u_1u_2u_1(s_1^{-1}s_2s_1)s_1^2s_2u_1\stackrel{(i)}{\subset}
	\underline{\underline{s_2u_1u_2\grv u_1}}+s_2u_1u_2u_1s_2s_1(s_2^{-1}s_1^2s_2)u_1+U''''
	\subset s_2u_1u_2u_1\grv s_2u_1+U''''
	\subset s_2u_1u_2\grv u_1s_2u_1+U''''$.
		
		However, 
		$\small{\begin{array}[t]{lcl}
			s_2u_1u_2\grv u_1s_2u_1&\subset&s_2u_1u_2s_1^2s_2(R_5+R_5s_1+
			R_5s_1^{-1}+R_5s_1^2+R_5s_1^3)s_2u_1\\
			&\subset&\underline{\underline{s_2u_1u_2s_1^2s_2^2u_1}}+\underline{\underline{s_2u_1u_2(s_1^2s_2s_1)s_2u_1}}
			+s_2u_1(u_2u_1s_2s_1^{-1}s_2u_1)+\\&&+
			s_2u_1u_2s_1^2\grv u_1+s_2u_1(s_1^{-1}u_2s_1)(s_1s_2s_1^3)s_2u_1\\
			&\stackrel{(i)}{\subset}&\underline{\underline{s_2u_1u_2\grv u_1}}+s_2u_1s_2u_1(s_2^2s_1s_2)s_2u_1+U''''\\
			&\subset&s_2u_1s_2u_1\grv u_1+U''''.
			\end{array}}$
		
		The result follows from the fact that $s_2u_1s_2u_1\grv u_1=\underline{\underline{s_2u_1s_2\grv u_1}}$.
		\qedhere
	\end{itemize}
\end{proof}
We can now prove the following lemma that helps us to ``replace''  inside the definition of $U$ the elements $\grv^5$ and $\grv^{-5}$ by the elements $s_2^{-2}s_1^2s_2^3s_1^2s_2^3$ and $s_2^{-2}s_1^{2}s_2^{-2}s_1^2s_2^{-2}$ modulo $U''''$, respectively.
\begin{lem}\mbox{}  
	\vspace*{-\parsep}
	\vspace*{-\baselineskip}\\ 
	\begin{itemize}[leftmargin=0.6cm]
		\item [(i)]$s_2^{-2}s_1^2s_2^3s_1^2s_2^3\in u_1^{\times}\grv^5+ U''''.$
		\item [(ii)]$s_2^{-2}s_1^{2}s_2^{-2}s_1^2s_2^{-2}\in u_1^{\times}\grv^{-5}+U''''.$
		\item  [(iii)]$s_2^{-2}u_1s_2^{-2}s_1^{2}s_2^{-2}\subset U.$
	\end{itemize}
	\label{lol}
\end{lem}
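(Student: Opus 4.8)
The plan is to treat the three parts in turn, following the pattern of the proofs of Lemmas \ref{cc} and \ref{ll}: rewrite the word in question by means of the generalized braid relations, the centrality of $z=s_1^2\grv$, and conjugations by powers of $s_1$, so as to make $\grv^{\pm 5}$ appear with an invertible coefficient, and then check that every leftover summand lies in $U''''$ using the sets already controlled there (Proposition \ref{p1}, Lemma \ref{oo}, Proposition \ref{p2}, Proposition \ref{xx}, Lemma \ref{ll}) together with the $\grF$-stability of $U''''$ from Lemma \ref{r1}.

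For (i), I would start from the identity $s_1^2=s_2^{-1}\grv s_2^{-1}$, which already gives $s_2^{-2}s_1^2s_2^3s_1^2s_2^3=s_2^{-3}\grv s_2\grv s_2^2$. Pulling one copy of $\grv$ past intervening powers of $s_2$ (using that $z=s_1^2\grv$ is central) and conjugating by a suitable power of $s_1$ turns the word into a combination of expressions of the shapes $s_2u_1s_2u_1s_2u_1$, $s_2u_1u_2u_1s_2s_1^{-1}s_2$, $s_2u_1u_2u_1s_2^{-1}s_1s_2^{-1}$, $s_2u_1u_2u_1u_2\grv$ and $s_2^{-2}s_1^2s_2^2s_1^3s_2^2$, multiplied by powers of $\grv$. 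At the crucial point one expands an extreme power of $s_2$ (or of $s_1$) via $(\ref{one})$, respectively $(\ref{two})$, so that the top exponent carries the unit $\pm e^{\mp 1}$: this top term produces $u_1^{\times}\grv^5$, while the remaining terms are handled by Proposition \ref{p2}, Proposition \ref{xx}, Lemma \ref{oo}, Lemma \ref{ll}, or lie in $U'$ by Proposition \ref{p1}, hence in $U''''$.

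For (ii) I would run the analogous computation with $\grv^{-5}$ in place of $\grv^5$; here the relation $\grF(\grv)=\grv^{-1}$ and the $\grF$-stability of $U''''$ (Lemma \ref{r1}) are useful for disposing of several intermediate terms. For (iii), expand $u_1$ over the $R_5$-basis $\{1,s_1,s_1^{-1},s_1^2,s_1^{-2}\}$, so that it suffices to show $s_2^{-2}s_1^{j}s_2^{-2}s_1^2s_2^{-2}\in U$ for $j\in\{0,\pm 1,\pm 2\}$. The case $j=2$ is immediate, since $s_2^{-2}s_1^2s_2^{-2}s_1^2s_2^{-2}$ is exactly the element of part (ii), hence lies in $u_1^{\times}\grv^{-5}+U''''\subset U$; the case $j=0$ is also immediate, since after expanding $s_2^{-4}$ via $(\ref{two})$ the word lands in $u_2u_1u_2\subset U'$ by Proposition \ref{p1}. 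For $j\in\{\pm 1,-2\}$ one rewrites the word with the generalized braid relations (using, among others, $s_2^{-2}s_1^2s_2^{-2}=s_2^{-3}\grv s_2^{-3}$, which follows from $s_1^2=s_2^{-1}\grv s_2^{-1}$) until it falls into one of the sets treated in Propositions \ref{p2} and \ref{xx} or Lemmas \ref{cc} and \ref{ll}, or into $u_1^{\times}\grv^{\pm 5}+U''''$ via parts (i) and (ii); combining the five cases yields $s_2^{-2}u_1s_2^{-2}s_1^2s_2^{-2}\subset U$.

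The step I expect to be the main obstacle is the bookkeeping in parts (i) and (ii): an incautious expansion creates words of syllable length $5$ that are not literally among the generators of $U''''$ and must be re-reduced, and it is precisely the sets isolated in Propositions \ref{p2} and \ref{xx} (which already absorb $\grv^{\pm 2},\grv^{\pm 3},\grv^{\pm 4}$ through Lemmas \ref{oo}, \ref{cc} and \ref{ll}) that make these reductions terminate inside $U''''$.
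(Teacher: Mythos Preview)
Your overall plan is the paper's plan: rewrite, expand one exponent so that the top term carries an invertible coefficient and produces the required power of $\grv$, and absorb the leftovers using exactly the toolkit you list (Lemma~\ref{oo}, Propositions~\ref{p2} and~\ref{xx}, Lemma~\ref{ll}, plus $\grF$-stability from Lemma~\ref{r1}). Two points of divergence are worth flagging.

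For (i), your entry identity $s_2^{-2}s_1^2s_2^3s_1^2s_2^3=s_2^{-3}\grv s_2\grv s_2^2$ is correct but is not the one the paper exploits; from there ``pulling $\grv$ past $s_2$'' is not free (it introduces $s_1^{\pm2}$ factors via $\grv s_2=s_1^{-2}s_2s_1^2\grv$), and it is not obvious this path terminates cleanly. The paper instead rewrites to $s_2^{-2}s_1^2s_2^2s_1^{-2}s_2^2\grv\, s_1^2$ and expands the middle $s_1^{-2}$: the invertible-coefficient term is $s_1^{3}$, which yields precisely $(s_2^{-2}s_1^2s_2^2s_1^{3}s_2^2)\cdot\grv$, i.e.\ the element of Lemma~\ref{ll} times $\grv$, giving $u_1^{\times}\grv^{4}\cdot\grv=u_1^{\times}\grv^{5}$ in one stroke; the other four terms are then pushed into $U''''$ via Proposition~\ref{xx}(iii), Lemma~\ref{oo}(ii), Proposition~\ref{l2}(ii) and Proposition~\ref{p2}. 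You already list $s_2^{-2}s_1^2s_2^2s_1^3s_2^2$ among your target shapes, so you are very close; just make sure your initial rewrite lands there with an invertible coefficient.

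For (ii), the paper does \emph{not} run a symmetric computation. It expands the left-hand $s_1^{2}$ in $s_2^{-2}s_1^{2}s_2^{-2}s_1^2s_2^{-2}$; four of the five summands are disposed of directly into $U''''$ (via Propositions~\ref{p1} and~\ref{p2}), and the remaining one is massaged into $\grF\big(u_1\,s_2^{-2}s_1^2s_2^3s_1^2s_2^3\big)$, whence the conclusion follows from part~(i) and Lemma~\ref{r1}. So your instinct to use $\grF$ is right, but it enters as the last step of~(ii) reducing to~(i), not as a mirror-image argument. Your treatment of (iii) matches the paper: expand $u_1$, use (ii) for $j=2$, Proposition~\ref{p1} for $j=0$, and for $j\in\{\pm1,-2\}$ a single braid-type rewrite lands each term in $u_1s_2^{\pm1}u_1u_2u_1u_2u_1\subset U'''$ via Proposition~\ref{p2}.
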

\begin{proof}\mbox{}  
	\vspace*{-\parsep}
	\vspace*{-\baselineskip}\\ 
	\begin{itemize}[leftmargin=0.6cm]
		\item[(i)] We notice that $s_2^{-2}s_1^2s_2^3s_1^2s_2^3=s_2^{-2}s_1^2s_2^3s_1(s_1s_2^3s_1^{-1})s_1
		=s_2^{-2}s_1^2s_2^2(s_2s_1s_2^{-1})s_1^2(s_1s_2s_1^{-1})s_1^2		=s_2^{-2}s_1^2s_2^2s_1^{-1}(s_2s_1^3s_2^{-1})s_1s_2s_1^2
		=s_2^{-2}s_1^2s_2^2s_1^{-2}s_2^2\grv s_1^2$. 
		We expand $s_1^{-2}$ as a linear combination of $s_1^{-1}$, 1, $s_1$ $s_1^{2}$ and $s_1^3$, where the coefficient of $s_1^3$ is invertible, and we have:
		
		$\small{\begin{array}[t]{lcl}
		s_2^{-2}s_1^2s_2^2s_1^{-2}s_2^2\grv s_1^2	
			&\in&s_2^{-2}s_1(s_1s_2^2s_1^{-1})s_2^2\grv u_1+ s_2^{-1}(s_2^{-1}s_1^4s_2)s_2^2\grv u_1+\\&&+s_2^{-2}s_1^2(s_2^2s_1s_2)s_2\grv u_1+s_2^{-1}(s_2^{-1}s_1^2s_2)s_2s_1^2s_2^2\grv s_2\grv u_1+
			u_1^{\times}s_2^{-2}s_1^2s_2^2s_1^{3}s_2^2\grv s_1^2\\
			&\stackrel{\ref{ll}}{\in}&\underbrace{s_1(s_1^{-1}s_2^{-2}s_1)s_2^{-1}s_1^2s_2^3\grv u_1+s_1(s_1^{-1}s_2^{-1}s_1)s_2^{4}s_1^{-1}s_2^2 s_1\grv u_1}_{\in u_1s_2u_1u_2u_1u_2\grv u_1}
			+s_2^{-2}s_1^3\grv^2u_1+\\&&+
			(s_2^{-1}s_1s_2)s_2(s_1^{-1}s_2s_1)s_1s_2^2\grv u_1+u_1^{\times}(u_1\grv^3+u_1^{\times}\grv^4+u_1s_2u_1u_2u_1u_2u_1)\grv s_1^2\\
			&\in&\underline{\underline{s_2^{-2}\grv^2u_1}}+s_1s_2s_1^{-1}s_2^2s_1(s_2^{-1}s_1s_2)s_2\grv  u_1+u_1\grv^3+u_1^{\times}\grv^5+u_1s_2u_1u_2u_1u_2u_1\grv u_1\\
			&\in&s_1s_2s_1^{-1}(u_2s_1^2s_2s_1^{-1}s_2)\grv u_1+u_1^{\times}\grv^5+u_1s_2u_1u_2u_1u_2u_1\grv u_1+U''''.
			\end{array}}$
		
		By proposition \ref{l2}(ii) we have that $s_2s_1^{-1}(u_2s_1^2s_2s_1^{-1}s_2)\grv u_1\subset s_2s_1^{-1}(u_1\grv^2+R_5s_2^2s_1^{-2}s_2s_1^{-1}s_2+u_1u_2u_1u_2u_1)\grv u_1	\stackrel{\ref{ll}}{\in}s_2\grv^2u_1+u_1s_2u_1u_2u_1u_2u_1\grv u_1$
		and, hence, the element $s_2^{-2}s_1^2s_2^2s_1^{-2}s_2^2\grv s_1^2 $ is inside $s_2\grv^2u_1+u_1^{\times}\grv^5+u_1s_2u_1u_2u_1u_2u_1\grv u_1+U''''$. We  notice that $u_1s_2u_1u_2u_1u_2u_1\grv u_1=u_1s_2u_1u_2u_1u_2\grv u_1$ and, hence, by lemma \ref{oo}(ii) and proposition \ref{xx}(iii)   we have that the element $s_2^{-2}s_1^2s_2^2s_1^{-2}s_2^2\grv s_1^2$ is inside $u_1^{\times}\grv^5+U''''$.

		\item[
		(ii)]$\small{\begin{array}[t]{lcl}
			s_2^{-2}s_1^{2}s_2^{-2}s_1^2s_2^{-2}&=&s_2^{-2}(as_1+b+cs_1^{-1}+ds_1^{-2}+es_1^{-3})s_2^{-2}s_1^2s_2^{-2}
			\\
			&\in&\underline{\underline{s_1(s_1^{-1}s_2^{-2}s_1)s_2^{-2}s_1^2s_2^{-2}}}+
			\underline{R_5s_2^{-4}s_1^2s_2}+\underline{\underline{R_5s_2^{-1}(s_2^{-1}s_1^{-1}s_2^{-2})s_1^2s_2^{-2}}}+\\&&+R_5s_2^{-1}(s_2^{-1}s_1^{-2}s_2)s_2^{-3}s_1^2s_2^{-2}+R_5
			s_2^{-2}s_1^{-2}(s_1^{-1}s_2^{-2}s_1)s_1s_2^{-2}\\
			&\in&R_5s_2^{-1}s_1s_2^{-2}(s_1^{-1}s_2^{-3}s_1)s_1s_2^{-2}+R_5
			s_2^{-1}(s_2^{-1}s_1^{-2}s_2)s_1^{-2}(s_2^{-1}s_1s_2)s_2^{-3}+U''''\\ 
			&\in&R_5s_2^{-1}s_1^2(s_1^{-1}s_2^{-1}s_1^{-3})s_2^{-1}s_1s_2^{-2}+R_5s_2^{-1}s_1s_2^{-1}(s_2^{-1}s_1^{-2}s_2)s_1^{-1}s_2^{-3}+U''''\\
			&\in&\underline{\underline{R_5s_2^{-1}s_1^2s_2^{-2}(s_2^{-1}s_1^{-1}s_2^{-2})s_1s_2^{-2}}}+R_5s_2^{-1}s_1(s_2^{-1}s_1s_2)s_2^{-2}s_1^{-2}s_2^{-3}+U''''
			\\
			&\in&R_5(s_2^{-1}s_1^2s_2)s_1^{-1}s_2^{-2}s_1^{-2}s_2^{-3}+U''''\\
			&\in& \grF(u_1s_2^{-2}s_1^2s_2^3s_1^2s_2^3)+U''''.
			\end{array}}$
		
		The result then follows from (i) and Lemma \ref{r1}.
		\item[(iii)] We expand $u_1$ as $R_5+R_5s_1+R_5s_1^{-1}+R_5s_1^2+R_5s_1^{-2}$ and we have that
		
		$\small{\begin{array}{lcl}s_2^{-2}u_1s_2^{-2}s_1^2s_2^{-2}&\subset& \underline{R_5s_2^{-4}s_1^2s_2^{-2}}+\underline{\underline{R_5s_2^{-1}(s_2^{-1}s_1s_2)s_2^{-3}s_1^2s_2^{-2}}}+\underline{\underline{R_5(s_2^{-2}s_1^{-1}s_2^{-1})s_2^{-1}s_1^2s_2^{-2}}}+\\&&+R_5s_2^{-2}s_1^2s_2^{-2}s_1^2s_2^{-2}+R_5s_2^{-2}s_1^{-2}s_2^{-2}s_1^2s_2^{-2}.
			\end{array}}$
		
	The element $s_2^{-2}s_1^2s_2^{-2}s_1^2s_2^{-2}$ is inside $U''''$, by (ii). Moreover, we have $s_2^{-2}s_1^{-2}s_2^{-2}s_1^2s_2^{-2}=
		 s_2^{-1}\grv^{-1}(s_2^{-1}s_1^2s_2)s_2^{-3}
		=s_2^{-1}\grv^{-1}s_1s_2^2s_1^{-1}s_2^{-3}
			=s_2^{-1}s_1\grv^{-1}s_2^2s_1^{-1}s_2^{-3}=\underline{\underline{s_2^{-1}s_1^2(s_1^{-1}s_2^{-1}s_1^{-2})s_2s_1^{-1}s_2^{-3}}}$.
			\qedhere
	\end{itemize}
\end{proof}
We can now prove the main theorem of this section.
\begin{thm}
	\mbox{}  
	\vspace*{-\parsep}
	\vspace*{-\baselineskip}\\ 
	\begin{itemize}[leftmargin=0.6cm]
		\item [(i)]	$U=U''''+u_1\grv^{-5}$.
		\item[(ii)]$H_5=U$.
		\label{thh2}
	\end{itemize}
\end{thm}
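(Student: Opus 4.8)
The plan is to imitate the proof of Theorem~\ref{th} for $H_4$, now with the longer tower $U'\subseteq U''\subseteq U'''\subseteq U''''\subseteq U$, each module obtained from the previous one by adjoining two powers of $\grv$ (and, for $U''$, also four extra length-five words). For part~(i), the inclusion $U''''+u_1\grv^{-5}\subseteq U$ is immediate from the definition of $U$, so the content is $U\subseteq U''''+u_1\grv^{-5}$, i.e.\ $u_1\grv^5\subseteq U''''+u_1\grv^{-5}$. Lemma~\ref{lol}(i) says precisely that $\grv^5$ equals, up to a left factor in $u_1^\times$ and up to an element of $U''''$, the word $s_2^{-2}s_1^2s_2^3s_1^2s_2^3$; since $U''''$ is a $u_1$-bimodule and $u_1$ commutes with $\grv$, it therefore suffices to prove $s_2^{-2}s_1^2s_2^3s_1^2s_2^3\in U''''+u_1\grv^{-5}$. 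I would rewrite this word with Lemma~\ref{lem1}, the braid relation and the generalized braid relations, peeling powers of $s_1$ off the ends so that every intermediate term is either in $u_1u_2u_1u_2u_1$ (hence in $U'$ by Proposition~\ref{p1}), or of the form $u_1s_2^{\pm1}u_1u_2u_1u_2u_1$ (in $U'''$ by Proposition~\ref{p2}), or one of the shapes handled by Lemma~\ref{oo} and Propositions~\ref{l2} and~\ref{xx}, or one of the $\grv$-replacement words of Lemmas~\ref{cc} and~\ref{ll}; the summand $u_1\grv^{-5}$ is produced exactly when the reduction runs into $s_2^{-2}s_1^2s_2^{-2}s_1^2s_2^{-2}$, which by Lemma~\ref{lol}(ii) is a unit times $\grv^{-5}$ modulo $U''''$.

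\textbf{Part (ii).} Exactly as in Theorem~\ref{th}(ii): $1\in u_1\subseteq u_1u_2u_1\subseteq U'\subseteq U$, and $s_1U\subseteq U$ because $U$ is a $u_1$-bimodule, so it is enough to show that $U$ is a left ideal, i.e.\ $s_2U\subseteq U$. By part~(i) we may take $U=U''''+u_1\grv^{-5}$ and unfold $U''''=U'+u_1\grv^{2}+u_1\grv^{-2}+u_1\grv^{3}+u_1\grv^{-3}+u_1\grv^{4}+u_1\grv^{-4}$ together with the four extra generators of $U''$; since $U$ is stable under left multiplication by $u_1$ and $u_1$ commutes with $\grv$, proving $s_2U\subseteq U$ reduces to the following, one summand at a time:
\begin{itemize}
\item $s_2U'\subseteq U'''$, which is exactly the computation carried out inside the proof of Proposition~\ref{p2};
\item $s_2\grv^{\pm2}u_1\subseteq U''$, which is Lemma~\ref{oo}(ii) (together with a short rewrite for the minus sign);
\item $s_2\grv^{\pm3}u_1\subseteq U$ and $s_2\grv^{\pm4}u_1\subseteq U$: use Lemma~\ref{cc}, respectively Lemma~\ref{ll}, to replace $\grv^{\pm3}$, respectively $\grv^{\pm4}$, modulo $U''$, respectively $U'''$, by a fixed word of length five, which is then ground down into $U$ using Propositions~\ref{p1},~\ref{p2},~\ref{l2} and~\ref{xx} and the cases already treated;
\item $s_2\grv^{-5}u_1\subseteq U$: reduce $s_2\cdot s_2^{-2}s_1^2s_2^{-2}s_1^2s_2^{-2}$ via Lemma~\ref{lol}(ii) and the generalized braid relations to a word of the form covered by Lemma~\ref{lol}(iii);
\item $s_2$ applied to each of the four extra generators of $U''$: one either cancels the leading $s_2$ against an $s_2^{-1}$ and lands directly in $U'$, or reaches one of the shapes treated by Propositions~\ref{l2} and~\ref{xx} and Lemma~\ref{lol}(iii).
\end{itemize}
Once $s_2U\subseteq U$ is proved, $U$ is a left ideal containing $1$, hence $U=H_5$.

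\textbf{Main obstacle.} All the genuine difficulty sits at the top of the tower: Lemmas~\ref{cc},~\ref{ll} and especially~\ref{lol}, the reduction of $s_2^{-2}s_1^2s_2^3s_1^2s_2^3$ in part~(i), and the case $s_2\grv^{-5}u_1\subseteq U$ in part~(ii). Each replacement lemma trades $\grv^{k}$ for a fresh word of length five whose reduction in turn calls on the entire preceding part of the tower, so the real point is that the modules $U'\subseteq\dots\subseteq U$, the four extra generators of $U''$, and the chosen replacement words must be set up so precisely that this cascade terminates inside the finite generating list of $U$; keeping track of which rewrites genuinely create a new generator (thereby forcing each strict inclusion $U'\subsetneq U''\subsetneq U'''\subsetneq U''''\subsetneq U$) rather than falling back into a known piece is the bookkeeping crux. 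Everything else is routine application of Lemma~\ref{lem1} and the (generalized) braid relations.
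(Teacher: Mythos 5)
Your plan follows the paper's proof essentially step for step: part (i) is the paper's argument run in the mirror direction (the paper starts from $\grv^{-5}$ via Lemma~\ref{lol}(ii) and reduces the resulting word through $s_2^{-2}s_1^2s_2^{-2}s_1^2s_2^3$ until Lemma~\ref{lol}(i) produces $u_1^{\times}\grv^5$, whereas you start from $\grv^5$ via Lemma~\ref{lol}(i) and aim for Lemma~\ref{lol}(ii); the two are equivalent since each replacement is by a $u_1^{\times}$-multiple modulo the $u_1$-bimodule $U''''$), and your case list for $s_2U\subseteq U$ in part (ii) --- Proposition~\ref{p2} for $s_2U'$, Lemma~\ref{oo}(ii) together with $\grF$ and the replacement Lemmas~\ref{cc}, \ref{ll}, \ref{lol} for the powers of $\grv$, and Propositions~\ref{l2}, \ref{xx} and Lemma~\ref{lol}(iii) for the extra generators of $U''$ --- is precisely the paper's decomposition. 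The explicit word reductions are only sketched, but every ingredient you name is used in the paper in exactly the role you assign it.
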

\begin{proof}
	\mbox{}  
	\vspace*{-\parsep}
	\vspace*{-\baselineskip}\\ 
	\begin{itemize}[leftmargin=0.6cm]
		\item [(i)]	
		By definition, $U=U''''+u_1\grv^5+u_1\grv^{-5}$. Hence, it is enough to prove that $\grv^{-5}\in u_1^{\times}\grv^{5}+U''''.$
		By lemma \ref{lol}(ii) we have that $\grv^{-5}\in u_1^{\times}s_2^{-2}s_1^{2}s_2^{-2}s_1^2s_2^{-2}+U''''$. We expand $\bold
		{s_2^{-2}}$ as a linear combination of $s_2^{-1}$, 1, $s_2$, $s_2^2$ and $s_2^3$, where the coefficient of $s_2^3$ is invertible, and we have: 
		
		$\small{\begin{array}{lcl}
			s_2^{-2}s_1^{2} s_2^{-2}s_1^2\bold {s_2^{-2}}&\in& R_5s_2^{-1}(s_2^{-1}s_1^2s_2)s_2^{-3}s_1^2s_2^{-1}+\underline{R_5s_2^{-2}s_1^2s_2^{-2}s_1^2}+
		R_5(s_1^{-1}s_2^{-2}s_1)s_1s_2^{-1}(s_2^{-1}s_1^2s_2)+\\&&+R_5s_1(s_1^{-1}s_2^{-2}s_1)s_1s_2^{-1}(s_2^{-1}s_1^2s_2)s_2+R_5^{\times}s_2^{-2}s_1^{2}s_2^{-2}s_1^2s_2^3\\
			&\in&u_1s_2^{-1}s_1s_2^2(s_1^{-1}s_2^3s_1)s_1s_2^{-1}+u_1s_2s_1^{-1}(s_1^{-1}s_2^{-1}s_1)(s_2^{-1}s_1s_2)s_2s_1^{-1}+\\&&+u_1s_2s_1^{-1}(s_1^{-1}s_2^{-1}s_1)(s_2^{-1}s_1s_2)s_2s_1^{-1}s_2+
			u_1^{\times}s_2^{-2}s_1^{2}s_2^{-2}s_1^2s_2^3+U''''\\
			&\in&u_1\grF(s_2s_1^{-1}u_2u_1s_2s_1^{-1}s_2)+
			\underline{\underline{u_1s_2s_1^{-1}s_2(s_1^{-1}s_2^{-1}s_1)s_2s_1^{-1}s_2
					s_1^{-1}}}+\\&&+u_1s_2s_1^{-1}s_2(s_1^{-1}s_2^{-1}s_1)s_2s_1^{-1}s_2
			s_1^{-1}s_2+u_1^{\times}s_2^{-2}s_1^{2}s_2^{-2}s_1^2s_2^3+U''''\\
			&\stackrel{\ref{l2}}{\in}&u_1\grF\big(s_2s_1^{-1}\grv^2u_1+s_2s_1^{-1}s_2^2s_1^{-2}s_2s_1^{-1}s_2+
			s_2s_1^{-1}u_1u_2u_1u_2u_1\big)+\\&&+
			u_1s_2s_1^{-1}s_2^2s_1^{-2}s_2s_1^{-1}s_2+u_1^{\times}s_2^{-2}s_1^{2}s_2^{-2}s_1^2s_2^3+U''''
		\end{array}}$
	
	However, by lemma \ref{oo}(ii) proposition \ref{xx}(i)  we have that $\grF\big(s_2s_1^{-1}\grv^2u_1+s_2s_1^{-1}s_2^2s_1^{-2}s_2s_1^{-1}s_2+\underline{\underline{s_2s_1^{-1}u_1u_2u_1u_2u_1}}\big)\subset \grF(U'''')\stackrel{\ref{r1}}{\subset}U''''$. Therefore, it will be sufficient to prove that the element $s_2^{-2}s_1^{2}s_2^{-2}s_1^2s_2^3$ is inside  $u_1^{\times}\grv^5+U''''$. We expand $\bold
	{s_2^{-2}}$ as a linear combination of $s_2^{-1}$, 1, $s_2$, $s_2^2$ and $s_2^3$, where the coefficient of $s_2^3$ is invertible, and we have: 
		$s_2^{-2}s_1^{2}\bold {s_2^{-2}}s_1^2s_2^3 \in u_1s_2^{-3}(s_2s_1^2s_2^{-1})s_1^2s_2^3+\underline{u_1s_2^{-2}s_1^4s_2^3}+
		\underline{\underline{u_1s_2^{-2}(s_1^2s_2s_1)s_1s_2^3}}+u_1s_2^{-1}(s_2^{-1}s_1^2s_2)s_2s_1^2s_2^3+u_1^{\times}s_2^{-2}s_1^2s_2^3s_1^2s_2^3$.
		By lemma \ref{lol}(i) we have that $u_1^{\times}s_2^{-2}s_1^2s_2^3s_1^2s_2^3\subset u_1^{\times}\grv^5+U''''.$
		Therefore, $s_2^{-2}s_1^{2}s_2^{-2}s_1^2s_2^3 \in\underline{\underline{u_1(s_1s_2^{-3}s_1^{-1})s_2^2s_1^3s_2^3}}+u_1s_2^{-1}s_1s_2^2s_1^{-1}s_2s_1^2s_2^3+u_1^{\times}\grv^5+U''''$. It remains to prove that the element $s_2^{-1}s_1s_2^2s_1^{-1}s_2s_1^2s_2^3$ is inside $U''''$. Indeed, 
		$s_2^{-1}s_1s_2^2s_1^{-1}s_2s_1^2s_2^3=(s_2^{-1}s_1s_2)s_2(s_1^{-1}s_2s_1)s_1s_2^3=s_1s_2(s_1^{-1}s_2^2s_1)(s_2^{-1}s_1s_2)s_2^2=s_1s_2^2s_1^2(s_2^{-1}s_1s_2)s_1^{-1}s_2^2=s_1s_2^3(s_2^{-1}s_1^3s_2)s_1^{-2}s_2^2=\underline{\underline{s_1^2(s_1^{-1}s_2^3s_1)s_2^3s_1^{-3}s_2^2}}$.

		\item[(ii)] As we explained in the beginning of this section, since $1\in U$ it will be sufficient to prove that $U$ is invariant under left multiplication by $s_2$. We use the fact that $U$ is equal to the RHS of (i) and by the definition of $U''''$ we have:
		
		$\small{\begin{array}{lcl} U&=&U'+\sum\limits_{k=2}^4\grv^{k}u_1+\sum\limits_{k=2}^5\grv^{-k}u_1+
		u_1s_2^{-2}s_1^2s_2^{-1}s_1s_2^{-1}u_1+u_1s_2^{2}s_1^{-2}s_2s_1^{-1}s_2u_1+
		u_1s_2s_1^{-2}s_2^{2}s_1^{-2}s_2^{2}u_1+\\&&+u_1s_2^{-1}s_1^2s_2^{-2}s_1^2s_2^{-2}u_1.
		\end{array}}$
	
		On one hand, $s_2(U'+\grv^2u_1+
		u_1s_2^{-2}s_1^2s_2^{-1}s_1s_2^{-1}u_1+u_1s_2^{2}s_1^{-2}s_2s_1^{-1}s_2u_1)\subset U$ (proposition \ref{p2}, lemma \ref{oo}$(ii)$ and proposition \ref{xx}$(i),(ii)$). 
		On the other hand, $$\sum\limits_{k=2}^{5}s_2\grv^{-k}u_1=\sum\limits_{k=2}^{5}s_1^{-2}s_2^{-1}\grv^{-k+1}u_1\subset \grF(\sum\limits_{k=2}^{5}u_1s_2\grv^{k-1}u_1)\stackrel{\ref{p1}\text{ and }\ref{oo}(ii)}{\subset}u_1\grF(U+s_2\grv^{3}+s_2\grv^4)u_1
		$$
		Therefore, by lemma \ref{r1} we only need to prove that 
		$$s_2(\grv^3u_1+\grv^4 u_1+
		u_1s_2s_1^{-2}s_2^{2}s_1^{-2}s_2^{2}u_1+
		u_1s_2^{-1}s_1^2s_2^{-2}s_1^2s_2^{-2}u_1)\subset U.$$
		We first notice that $s_2\grv^4u_1=s_2\grv^3\grv u_1=s_2\grv^3u_1\grv$. Therefore, in order to prove that $s_2(\grv^3u_1+\grv^4u_1)\subset U$, it will be sufficient to prove that $s_2\grv^3u_1\subset u_1s_2u_1u_2u_1u_2u_1$ (propositions \ref{p2} and \ref{xx}(iii)).
		Indeed, we have: $s_2\grv^3 u_1=s_2\grv^2\grv u_1
		\stackrel{\ref{oo}(ii)}{=}s_1s_2s_1^4s_2s_1^3s_2\grv u_1
		=s_1s_2s_1^4s_2s_1^4(s_1^{-1}s_2^2s_1)s_1s_2u_1
		=s_1s_2s_1^4s_2s_1^4s_2s_1^2(s_2^{-1}s_1s_2)u_1
		\subset u_1s_2u_1(s_2u_1s_2u_1s_2u_1)$.
		However, by lemma \ref{oo}(i)  we have that $u_1s_2u_1(s_2u_1s_2u_1s_2u_1)\subset u_1s_2u_1(\grv^2u_1+u_1u_2u_1u_2u_1)$.
		The result follows from lemma \ref{oo}(ii).

		It remains to prove that $s_2
		u_1s_2^{-1}s_1^2s_2^{-2}s_1^2s_2^{-2}u_1$ and $s_2u_1s_2s_1^{-2}s_2^{2}s_1^{-2}s_2^{2}u_1$ are subsets of  $U.$ We have: 
		
			$\small{\begin{array}{lcl}
				s_2u_1s_2^{-1}s_1^2s_2^{-2}s_1^2s_2^{-2}&=&s_2(R_5+R_5s_1+R_5s_1^{-1}+
				R_5s_1^2+R_5s_1^{-2})s_2^{-1}s_1^2s_2^{-2}s_1^2s_2^{-2}\\
				&\subset&\underline{R_5s_1^2s_2^{-2}s_1^2s_2^{-2}}+\underline{\underline{
						R_5(s_2s_1s_2^{-1})s_1^2s_2^{-2}s_1^2s_2^{-2}}}+\underline{\underline{
						R_5(s_2s_1^{-1}s_2^{-1})s_1^2s_2^{-2}s_1^2s_2^{-2}}}+\\&&
				+R_5(s_2s_1^{2}s_2^{-1})s_1^2s_2^{-2}s_1^2s_2^{-2}+
				R_5(s_2s_1^{-2}s_2^{-1})s_1^2s_2^{-2}s_1^2s_2^{-2}
			\\
				&\subset&u_1s_2^2s_1^3s_2^{-2}s_1^2s_2^{-2}+u_1s_2^{-2}u_1s_2^{-2}s_1^2s_2^{-2}+U.
				\end{array}}$
		
	However, by lemma \ref{lol}(iii) we have that $u_1s_2^{-2}u_1s_2^{-2}s_1^2s_2^{-2}\subset U$. Therefore, it remains to prove that the element $s_2^2s_1^3s_2^{-2}s_1^2s_2^{-2}$ is inside $ U$. For this purpose, we expand 
	$s_1^3$ as a linear combination of $s_1^2$, $s_1$, 1, $s_1^{-1}$ and $s_1^{-2}$ and  we have:
	
	$\small{\begin{array}{lcl}s_2^2s_1^3s_2^{-2}s_1^2s_2^{-2}&\in&
	R_5s_2^2s_1^2s_2^{-2}s_1^2s_2^{-2}+\underline{\underline{
			R_5s_2^2(s_1s_2^{-2}s_1^{-1})s_1^3s_2^{-2}}}+\underline{u_1s_2^{-2}}+
	\underline{\underline{R_5s_1^{-1}(s_1s_2^2s_1^{-1})s_2^{-2}s_1^2s_2^{-2}}}+\\&&+
	R_5s_2^{2}s_1^{-2}s_2^{-2}s_1^{2}s_2^{-2}.
	\end{array}}$
	
	However, $s_2^2s_1^2s_2^{-2}s_1^2s_2^{-2}=s_2(s_2s_1^2s_2^{-1})s_2^{-1}s_1(s_1s_2^{-2}s_1^{-1})s_1=s_2s_1^{-1}s_2(s_2s_1s_2^{-1})s_1s_2^{-1}s_1^{-2}s_2s_1=s_2s_1^{-2}(s_1s_2s_1^{-1})(s_2s_1^2s_2^{-1})s_1^{-2}s_2s_1=s_2s_1^{-2}s_2^{-1}(s_1s_2s_1^{-1})s_2^2s_1^{-1}s_2s_1=\underline{\underline{u_1s_2s_1^{-2}s_2^{-2}(s_1s_2^3s_1^{-1})s_2s_1}}$. 
	Moreover, we expand $s_1^2$ as a linear combination of $s_1$, 1, $s_1^{-1}$, $s_1^{-2}$ and $s_1^{-3}$ and we have:

		$\small{\begin{array}{lcl}

		s_2^{2}s_1^{-2}s_2^{-2}s_1^{2}s_2^{-2}

			&\in &\underline{R_5s_2^2s_1^{-2}s_2^{-4}}+\underline{\underline{
					R_5s_1^{-1}(s_1s_2^2s_1^{-1})(s_1^{-1}s_2^{-2}s_1)s_2^{-2}}}+R_5s_2^2s_1^{-2}s_2^{-1}(s_2^{-1}s_1^{-1}s_2^{-2})+\\&&+R_5s_2(s_2s_1^{-2}s_2^{-1})\grv^{-1}s_2^{-1}+
			\grF(s_2^{-2}s_1^2s_2^2s_1^3s_2^2)\\

			&\stackrel{\ref{ll}}{\in}&R_5s_2^2s_1^{-2}\grv^{-1}s_1^{-1}+
			R_5s_2s_1^{-1}s_2^{-2}s_1\grv^{-1}s_2^{-1}+\grF(U)+U\\

			&\stackrel{\ref{r1}}{\subset}&\underline{s_2^2\grv^{-1}u_1}+
			R_5s_2s_1^{-1}s_2^{-2}\grv^{-1}s_1s_2^{-1}+U\\
			&\subset&s_2s_1^{-1}u_2u_1s_2^{-1}s_1s_2^{-1}+U.
			\end{array}}$
		
		Therefore, by proposition \ref{xx}(ii) we have that the element $s_2^{2}s_1^{-2}s_2^{-2}s_1^{2}s_2^{-2}$ is inside $U$ and, hence, $s_2
		u_1s_2^{-1}s_1^2s_2^{-2}s_1^2s_2^{-2}u_1\subset U$.
		
		In order to finish the proof that $H_5=U$ it remains to prove that $s_2u_1s_2s_1^{-2}s_2^2s_1^{-2}s_2^2\subset U$. For this purpose we expand $u_1$ as 
		$R_5+R_5s_1+R_5s_1^2+R_5s_1^3+
		R_5s_1^4$ and we have:
		
		$\small{\begin{array}[t]{lcl}
			s_2u_1s_2s_1^{-2}s_2^2s_1^{-2}s_2^2
			&\subset&\grF(s_2^{-2}u_1s_2^{-2}s_1^2s_2^{-2})+\underline{\underline{
					R_5(s_2s_1s_2)s_1^{-2}s_2^2s_1^{-2}s_2^2}}+R_5\grv s_1^{-2}s_2^2s_1^{-2}s_2^2+\\&&
			+R_5s_2s_1^3s_2s_1^{-2}s_2^2s_1^{-2}s_2^{2}	+		R_5s_2s_1^4s_2s_1^{-2}s_2^2s_1^{-2}s_2^2.
			\end{array}}$
		
		However, by lemma \ref{lol}(iii) we have that $\grF(s_2^{-2}u_1s_2^{-2}s_1^2s_2^{-2})\subset \grF(U)\stackrel{\ref{r1}}{\subset} U$. Moreover, $\grv s_1^{-2}s_2^2s_1^{-2}s_2^2=\underline{\underline{R_5s_1^{-2}\grv s_2^2s_1^{-2}s_2^2}}$. It remains to prove that $C:=R_5s_2s_1^3s_2s_1^{-2}s_2^2s_1^{-2}s_2^{2}	+		R_5s_2s_1^4s_2s_1^{-2}s_2^2s_1^{-2}s_2^2$ is a subset of $U$. We have:

		$\small{\begin{array}[t]{lcl}
			C
			&=&
			R_5s_2s_1^2(s_1s_2s_1^{-1})s_1^{-1}s_2(s_2s_1^{-2}s_2^{-1})s_2^3+
			R_5s_1^{-1}(s_1s_2s_1^4)s_2s_1^{-2}s_2(s_2s_1^{-2}s_2^{-1})s_2^3\\
			&=&
			R_5(s_2s_1^2s_2^{-1})(s_1s_2s_1^{-1})s_2s_1^{-1}s_2^{-2}s_1s_2^3+R_5s_1^{-1}s_2^4(s_1s_2s_1^{-1})s_1^{-1}(s_2s_1^{-1}s_2^{-1})s_2^{-1}s_1s_2^3\\
			&=&R_5s_1^{-1}s_2(s_2s_1s_2^{-1})(s_1s_2^2s_1^{-1})s_2^{-2}s_1s_2^3+R_5s_1^{-1}s_2^2\grv s_1^{-2}s_2^{-1}s_1s_2^{-1}s_1s_2^3\\
			&=&R_5s_1^{-1}s_2s_1^{-1}(s_2s_1s_2^{-1})s_1^2s_2^{-1}s_1s_2^3+R_5s_1^{-1}s_2^2s_1^{-2}(s_2s_1^3s_2^{-1})s_1s_2^3+U\\
			&=&\underline{\underline{R_5s_1^{-1}s_2s_1^{-2}(s_2s_1^3s_2^{-1})s_1s_2^3}}+u_1s_2^2s_1^{-3}s_2^3s_1^2s_2^3+U. 
		
			\end{array}}$
		
		We expand $s_1^{-3}$ as a linear combination of $s_1^{-2}$, $s_1^{-1}$, 1, $s_1$, $s_1^2$ and $s_1^3$ and we have that 
		
		$\small{\begin{array}{lcl}u_1s_2^2s_1^{-3}s_2^3s_1^2s_2^3
		&\subset& u_1s_2^2s_1^{-2}s_2^3s_1^2s_2^3+\underline{\underline{u_1(s_1s_2^2s_1^{-1})s_2^3s_1^2s_2^3}}+\underline{u_1s_2^5s_1^2s_2^3}+
		\underline{\underline{u_1s_2(s_2s_1s_2^3)s_1^2s_2^3}}+\\&&+
	u_1s_2^2s_1^2s_2^3s_1^2s_2^3
		.
		\end{array}}$
		
		Hence, in order to finish the proof that $H_5=U$ we have to prove that $u_1s_2^2s_1^{-2}s_2^3s_1^2s_2^3$ and $u_1s_2^2s_1^2s_2^3s_1^2s_2^3$ are subsets of $U$. We have:

		$\small{\begin{array}{lcl}
			u_1s_2^2s_1^{-2}s_2^3s_1^2s_2^3&=&u_1s_2^2s_1^{-2}(as_2^2+bs_2+c+ds_2^{-1}+es_2^{-2})s_1^2s_2^3\\
			&\subset&u_1s_2^2s_1^{-2}s_2^2s_1^2s_2^3+
			u_1s_2^2s_1^{-2}s_2^2s_1^2s_2^3+\underline{u_1s_2^5}+\underline{\underline{
					u_1s_2(s_2s_1^{-2}s_2^{-1})s_1^2s_2^3}}+u_1s_2^2s_1^{-2}s_2^{-2}s_1^2s_2^3
			\end{array}}$
		
		However, we have that	$u_1s_2^2s_1^{-2}s_2^2s_1^2s_2^3=
		\underline{\underline{u_1(s_1^{-1}s_2^2s_1)s_2(s_2^{-1}s_1^{-3}s_2)s_1^2s_2^3}}$. Moreover,  we have
		$u_1s_2^2s_1^{-2}s_2^{-2}s_1^2s_2^3=u_1(s_1s_2^2s_1^{-1})(s_1^{-1}s_2^{-2}s_1)s_1s_2^3=	u_1s_2^{-1}s_1^2s_2^3\grv^{-1}s_1s_2^3=u_1s_2^{-1}s_1^2s_2^{3}s_1(s_2^{-1}s_1^{-2}s_2)s_2=u_1s_2^{-1}s_1^2s_2^3s_1^2s_2^{-1}(s_2^{-1}s_1^{-1}s_2)\subset\grF(s_2u_2u_1s_2s_1^{-1}s_2)$. By proposition \ref{xx}(i) and lemma \ref{r1} we have that $u_1s_2^2s_1^{-2}s_2^{-2}s_1^2s_2^3\subset U$. It remains to prove that $u_1s_2^2s_1^{-2}s_2^2s_1^2s_2^3\subset U$. We notice that $u_1s_2^2s_1^{-2}s_2^2s_1^2s_2^3=
		u_1s_2^3(s_2^{-1}s_1^{-2}s_2)s_2s_1^2s_2^3
		=u_1(s_1^{-1}s_2^3s_1)s_2^{-2}s_1^{-1}s_2s_1^2s_2^3
		=u_1s_2s_1^3s_2^{-3}s_1^{-1}s_2s_1^2s_2^3$. We expand $s_2^3$ as a linear combination of $s_2^2$, $s_2$, 1, $s_2^{-1}$ and $s_2^{-2}$ and we have:
		
		$\small{\begin{array}{lcl}
		
		u_1s_2s_1^3s_2^{-3}s_1^{-1}s_2s_1^2s_2^3

		&\subset&u_1s_2s_1^3s_2^{-2}(s_2^{-1}s_1^{-1}s_2)s_1^2s_2^2+u_1s_2s_1^3s_2^{-3}s_1^{-1}\grv+
		\underline{\underline{u_1s_2s_1^3s_2^{-3}s_1^{-1}s_2s_1^2}}+\\&&+
		\underline{\underline{u_1s_2s_1^3s_2^{-3}s_1^{-1}(s_2s_1^2s_2^{-1})}}+u_1s_2s_1^3s_2^{-3}s_1^{-1}(s_2s_1^2s_2^{-1})s_2^{-1}\\
		&\subset&u_1s_2s_1^3s_2^{-2}s_1(s_2^{-1}s_1s_2)s_2+
		u_1s_2s_1^3s_2^{-3}s_1^{-2}s_2(s_2s_1s_2^{-1})+\underline{\underline{u_1s_2s_1^3s_2^{-3}\grv s_1^{-1}}}+U\\
		&\subset&
		u_1(s_2u_1u_2u_1s_2s_1^{-1}s_2)u_1+U.
		\end{array}}$\\
		The result follows from \ref{xx}(i) and \ref{r1}.
		
		Using analogous calculations we will prove that $u_1s_2^2s_1^2s_2^3s_1^2s_2^3$ is a subset of $U$. We have:
		
		$\small{\begin{array}{lcl}
			u_1s_2^2s_1^2s_2^3s_1^2s_2^3&=&u_1s_2^2s_1^2(as_2^2+bs_2+c+ds_2^{-1}+es_2^{-2})s_1^2s_2^3\\
			&\subset&u_1s_2^2s_1^2s_2^2s_1^2s_2^3+u_1s_2\grv s_1^2s_2^3+\underline{u_1s_2^2s_1^4s_2^3}
			+\underline{\underline{u_1s_2(s_2s_1^2s_2^{-1})s_1^2s_2^3}}+
			u_1s_2^2s_1^2s_2^{-2}s_1^2s_2^3.
			\end{array}}$
		
		However, $u_1s_2\grv s_1^2s_2^3=\underline{\underline{u_1s_2s_1^2\grv
				s_2^3}}$. Therefore, it remains to prove that $u_1s_2^2s_1^2s_2^2s_1^2s_2^3$ and 	$u_1s_2^2s_1^2s_2^{-2}s_1^2s_2^3$ are subsets of $U$. We have:
		
		$\small{\begin{array}{lcl}
			u_1s_2^2s_1^2s_2^2s_1^2s_2^3&=&u_1s_2^2s_1^2s_2^2s_1^2(as_2^2+bs_2+c+ds_2^{-1}+es_2^{-2})\\
			&\subset&u_1s_2\grv^2s_2+u_1s_2\grv^2+\underline{u_1s_2^2s_1^2s_2^2s_1^2}+
			u_1s_2\grv s_2s_1^2s_2^{-1}+u_1s_2\grv s_2s_1^2s_2^{-2}\\
			\end{array}}$
		
		By lemma \ref{oo}(ii) we have that $u_1s_2\grv^2s_2+u_1s_2\grv^2\subset u_1s_2s_1^4s_2s_1^3s_2u_1s_2+U$. However, by lemma \ref{oo}(i) we have  $u_1s_2s_1^4(s_2s_1^3s_2u_1s_2)\subset u_1s_2s_1^4(\grv^2u_1+u_1u_2u_1u_2u_1)\subset u_1s_2\grv^2u_1+\underline{\underline{u_1s_2u_1u_2u_1u_2u_1}}$. Using another time lemma \ref{oo}(ii) we have that $u_1s_2s_1^4(\grv^2u_1+u_1u_2u_1u_2u_1)\subset U$. It remains to prove that $D:=u_1s_2\grv s_2s_1^2s_2^{-1}+u_1s_2\grv s_2s_1^2s_2^{-2}$ is a subset of  $U$. We have:

			$\small{\begin{array}{lcl}
			D &=& u_1s_2\grv(s_2s_1^2s_2^{-1})+u_1s_2\grv(s_2s_1^2s_2^{-1})s_2^{-1}\\
			&=&u_1s_2\grv s_1^{-1}s_2^2s_1+u_1s_2\grv s_1^{-1}s_2^2s_1s_2^{-1}\\
			&=&\underline{\underline{u_1s_2s_1^{-1}\grv s_2^2s_1}}+u_1s_2s_1^{-1}\grv s_2^2s_1s_2^{-1}.
			
			\end{array}}$
		
		However, we have $u_1s_2s_1^{-1}\grv s_2^2s_1s_2^{-1}=u_1s_2s_1^{-1}\grv s_2(s_2s_1s_2^{-1})=u_1s_2s_1^{-1}s_2s_1(s_1s_2s_1^{-1})s_2s_1=\underline{\underline{u_1s_2s_1^{-1}(s_2s_1s_2^{-1})s_1s_2^2s_1}},$ meaning that $D \subset U$.
		
		Using analogous calculations we will prove that $u_1s_2^2s_1^2s_2^{-2}s_1^2s_2^3\subset U$. We have:

		$\small{\begin{array}{lcl}
		u_1s_2^2s_1^2s_2^{-2}s_1^2s_2^3
			&=&
			u_1s_2(s_2s_1^2s_2^{-1})s_2^{-1}s_1^2s_2^3\\
		
			&=&u_1s_2s_1^{-1}s_2^2s_1s_2^{-1}s_1^2(as_2^2+bs_2+c+ds_2^{-1}+es_2^{-2})\\
			&\subset&u_1(s_1s_2s_1^{-1})s_2(s_2s_1s_2^{-1})s_1^2s_2^2+
			\underline{\underline{u_1s_2s_1^{-1}s_2^2s_1(s_2^{-1}s_1^2s_2)}}+
			
			\underline{
				\underline{u_1s_2s_1^{-1}s_2^2s_1s_2^{-1}s_1^2}}+\\&&+
			u_1s_2s_1^{-1}s_2(s_2s_1s_2^{-1})s_1^2s_2^{-1}+u_1s_2s_1^{-1}s_2(s_2s_1s_2^{-1})s_1^2s_2^{-2}\\
			
			&\subset&u_1s_2^{-1}(s_1s_2^2s_1^{-1})s_2s_1^3s_2^2
			+\underline{\underline{u_1s_2s_1^{-1}s_2s_1^{-1}(s_2s_1^3s_2^{-1})}}+\\&&+u_1s_2s_1^{-2}(s_1s_2s_1^{-1})(s_2s_1^3s_2^{-1})s_2^{-1}+U\\ 
			&\subset&u_1s_2^{-2}s_1^2s_2^2s_1^3s_2^2+u_1s_2s_1^{-2}s_2^{-1}(s_1s_2s_1^{-1})s_2^2(s_2s_1s_2^{-1})+U\\
			&\stackrel{
				 \ref{ll}}{\subset}&\underline{\underline{u_1s_2s_1^{-2}s_2^{-2}(s_1s_2^3s_1^{-1})s_2s_1}}+U.
			\end{array}}$
		
		\qedhere
	\end{itemize}
\end{proof}
\begin{cor}$H_5$ is a free $R_5$-module of rank $r_5=600$.
\end{cor}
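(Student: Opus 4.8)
The plan is to derive this exactly as Corollary~\ref{G8} was derived from Theorem~\ref{th}: combine Theorem~\ref{thh2} with Proposition~\ref{rp}. Since $W_5=G_{16}$ has order $r_5=600$, Proposition~\ref{rp} reduces the statement to exhibiting a generating set of $H_5$ as an $R_5$-module of size $600$; equivalently, since $u_1$ is free of rank $5$ over $R_5$, a generating set of $H_5$ as a left $u_1$-module of size $120$.

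First I would invoke Theorem~\ref{thh2}: part~(ii) gives $H_5=U$, and part~(i) gives $U=U''''+u_1\grv^{-5}$. Unfolding the definitions of $U''''$, $U'''$, $U''$ and $U'$, this exhibits $H_5$ as a finite sum of summands of three kinds: the single term $u_1u_2u_1$; the $18$ terms of the form $u_1\grs u_1$ with $\grs$ a fixed element of $B_3$ (the $14$ words of ``length $3$'' occurring in $U'$, together with the $4$ words of ``length $5$'' adjoined in passing from $U'$ to $U''$); and the $9$ terms $u_1\grv^{\pm k}$, namely $u_1\grv^{\pm1}$, $u_1\grv^{\pm2}$, $u_1\grv^{\pm3}$, $u_1\grv^{\pm4}$ and $u_1\grv^{-5}$.

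Next I would bound the number of left $u_1$-module generators of each summand. Using the basis $\{1,s_1,s_1^{-1},s_1^2,s_1^{-2}\}$ of $u_1$ over $R_5$ and the corresponding spanning set $u_2=R_5+R_5s_2+R_5s_2^{-1}+R_5s_2^2+R_5s_2^{-2}$, one has $u_1u_2u_1=u_1+u_1s_2u_1+u_1s_2^{-1}u_1+u_1s_2^2u_1+u_1s_2^{-2}u_1$, which is generated as a left $u_1$-module by $1+4\cdot5=21$ elements; each term $u_1\grs u_1$ is generated on the left by the $5$ elements $\grs b$ with $b$ ranging over the basis of $u_1$; and each $u_1\grv^{\pm k}$ by the single element $\grv^{\pm k}$. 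Hence $H_5$ is generated as a left $u_1$-module by at most $21+18\cdot5+9=120$ elements, and therefore as an $R_5$-module by at most $120\cdot5=600=r_5$ elements. Proposition~\ref{rp} then yields that $H_5$ is a free $R_5$-module of rank $600$.

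The only point requiring care is purely bookkeeping: checking that the enumeration of summands coming from $U''''$ and from $U=U''''+u_1\grv^{-5}$ is exhaustive and that the tally $21+18\cdot5+9=120$ is correct. There is no substantive obstacle, since Theorem~\ref{thh2} has already carried out all the structural work of reducing $H_5$ to the span of these finitely many pieces.
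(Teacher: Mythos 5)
Your proposal is correct and follows exactly the paper's own route: Proposition~\ref{rp} reduces the claim to exhibiting $600$ module generators, and Theorem~\ref{thh2} together with the decomposition $u_1u_2u_1=u_1+u_1s_2u_1+u_1s_2^{-1}u_1+u_1s_2^2u_1+u_1s_2^{-2}u_1$ gives $120$ left $u_1$-generators, hence $120\cdot 5=600$ over $R_5$. Your explicit tally $21+18\cdot 5+9=120$ is accurate and merely spells out the bookkeeping the paper leaves implicit.
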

\begin{proof}
	By proposition \ref{rp} it will be sufficient to show that $H_5$ is generated as $R_5$-module by $r_5$ elements. By theorem \ref{thh2} the definition of $U''''$ and the fact that $u_1u_2u_1=u_1+u_1s_2u_1+u_1s_2^{-1}u_1+u_1s_2^2u_1+u_1s_2^{-2}u_1$ we have that $H_5$ is spanned as left $u_1$-module by 120 elements. Since $u_1$ is spanned by 5 elements as a $R_5$-module, we have that $H_5$ is spanned over $R$ by $r_5=600$ elements. 
\end{proof}
\section{The irreducible representations of $B_3$ of dimension at most 5}
\indent

We set $\tilde{R_k}=\ZZ[u_1^{\pm1},...,u_k^{\pm1}]$, $k=2,3,4,5$. Let $\tilde{H_k}$ denote the quotient of the group algebra $\tilde{R_k}B_3$ by the relations $(s_i-u_1)...(s_i-u_k)$, $i=1,2$. In the previous sections we proved that $H_k$ is a free $R_k$-module of rank $r_k$. Hence, $\tilde{H_k}$ is a free $\tilde{R_k}$-module of rank $r_k$  (Lemma 2.3 in \cite{ivan}). We now assume that $\tilde{H_k}$ has a unique symmetrizing trace $t_k: \tilde{H_k} \arw \tilde{R_k}$ (i.e. a trace function such that the bilinear form $(h, h')\mapsto t_k(hh')$ is non-degenerate), having nice properties (see \cite{BMM}, theorem 2.1): for
example, $t_k(1)=1$, which means that $t_k$ specializes to the canonical symmetrizing form on $\CC W_k$. 

Let $\grm_{\infty}$  be the group of all roots of unity in $\CC$. We recall that $W_k$ is the finite quotient group $B_3/\langle s_i^k \rangle$, $k=2, 3, 4,5$ and $i=1,2$. We denote by $K_k$  the \emph{field of definition} of $W_k$, i.e. the number field contained in $\mathbb{Q}(\grm_{\infty})$, which is generated by the traces of all elements of $W_k$ (for more details see \cite{B}).
We denote by $\grm(K_k)$ the group of all roots of unity of $K_k$ and, for every integer $m>1$, we set $\grz_m:=$exp$(2
\grp i/m)$, where $i$  denotes here a square root of -1.

Let $\mathbf{v}=(v_1,...,v_k)$ be a set of $k$ indeterminates such that, for every $i\in\{1,...,k\}$, we have $v_i^{|\grm(K_k)|}=\grz_k^{-i}u_i$. By extension of scalars we obtain a $\CC(\mathbf{v})$-algebra
$\CC(\mathbf{v})\tilde{H_k}:=\tilde{H_k}\otimes_{\tilde{R_k}}\CC(\mathbf{v})$, which is split semisimple (see \cite{Malle}, theorem 5.2). Since the algebra $\CC(\mathbf{v})\tilde{H_k}$ is split, by Tits' deformation theorem (see theorem 7.4.6 in \cite{Geck}), the specialization $v_i \mapsto 1$ induces a bijection
Irr$(\CC(\mathbf{v})\tilde{H_k})\rightarrow$ Irr$(W_k)$.

Let $\varrho: B_3 \arw GL_n(\CC)$ be an irreducible representation of $B_3$ of dimension $k\leq 5$. We set $A:=\varrho(s_1)$ and $B:=\varrho(s_2)$. The matrices $A$ and $B$ are similar since $s_1$ and $s_2$ are conjugate $( s_2=(s_1s_2)s_1(s_1s_2)^{-1})$. Hence, by  Cayley-Hamilton theorem of linear algebra, there is a monic polynomial $m(X)=X^k+m_{n-1}X^{n-1}+...+m_1X+m_0\in \CC[X]$ of degree $k$ such that $m(A)=m(B)=0$. Let $R^k_K$ denotes  the integral closure of
$R_k$ in $K_k$. We fix $\gru: R^k_K\arw \mathbb{C}$ a \emph{specialization} of $R^k_K$, defined by $u_i\mapsto \grl_i$, where $\grl_i$ are the eigenvalues of $A$ (and $B$). We notice that $\gru$ is well-defined, since $m_0=$det$A\in \CC^{\times}$. Therefore, in order to determine $\varrho$ it will be sufficient to  describe the irreducible $\CC\tilde{ H_k}:=\tilde{H_k}\otimes_{\gru}\CC$-modules of dimension $k$.

When the algebra $\CC\tilde H_k$ is semisimple, we can use again Tits' deformation theorem and we have a canonical bijection between  the set of irreducible characters of $\CC\tilde{H_k}$ and  the set of irreducible characters of $\CC(\mathbf{v})\tilde{H_k}$,  which are in bijection with
the irreducible characters of $W_k$. However, this is not always the case. In order to determine the irreducible representations of  $\CC\tilde H_k$ in the general case (when we don't know a priori that  $\CC\tilde H_k$ is semisimple) we use a different approach.

Let $R_0^{+}\big(\CC(\mathbf{v})\tilde{H_k}\big)$ (respectively $R_0^{+}(\CC\tilde{ H_k})$) denote the subset of the \emph{Grothendieck group} of the category of finite dimensional $\CC(\mathbf{v})\tilde{H_k}$ (respectively $\CC\tilde{H_k}$)-modules consisting of elements $[V]$, where $V$ is a $\CC(\mathbf{v})\tilde{H_k}$ (respectively $\CC\tilde{H_k}$)-module (for more details, one may refer to \textsection 7.3 in \cite{Geck}). By theorem 7.4.3 in \cite {Geck} we obtain a well-defined decomposition map $$d_{\gru}: R_0^{+}\big(\CC(\mathbf{v})\tilde{H_k}) \arw R_0^{+}(\CC\tilde{ H_k}).$$
The corresponding \emph{decomposition matrix} is the Irr$\big(\CC(\mathbf{v})\tilde{H_k}\big)\times$ Irr$(\CC\tilde{ H_k})$ matrix $(d_{\grx\grf})$ with non-negative integer entries such that $d_{\gru}([V_{\grx}])=\sum\limits_{\grf}d_{\grx\grf}[V'_{\grf}]$, where $V_{\grx}$ is an irreducible $\CC(\mathbf{v})\tilde{H_k}$-module with character $\grx$ and $V_{\grf}$ is an irreducible $\CC \tilde{H_k}$-module with character $\grf$. 
This matrix records in which way the irreducible representations of
the semisimple algebra $\CC(\mathbf{v})\tilde{H_k}$ break up into irreducible representations 
of $\CC\tilde{ H_k}$. 

The form of the decomposition matrix is controlled by the \emph{Schur elements},  denoted as $s_{\grx}$, $\grx \in$ Irr$\big(\CC(\mathbf{v})\tilde{H_k}\big)$, with respect to the symmetric form $t_k$. The Schur elements belong to $R^k_K$ (see \cite{Geck}, Proposition
7.3.9) and they depend only on the symmetrizing form $t_k$  and the isomorphism class of the representation. Moreover, M. Chlouveraki has shown that these elements are products of cyclotomic polynomials over $K_k$ evaluated on monomials of degree 0 (see theorem 4.2.5 in \cite{Ma}). In the following section we are going to use these elements in order to determine the irreducible representations of  $\CC\tilde H_k$ (for more details about the definition and the properties of the Schur elements, one may refer to \S 7.2 in \cite{Geck}).

We say that the $\CC(\mathbf{v})\tilde{H_k}$-modules $V_{\grx}, V_{\grc}$ \emph{belong to the same block} if the corresponding characters $\grx, \grc$  label the rows of the same block in the decomposition matrix $(d_{\grx\grf})$ (by definition, this means that there is a $\grf\in\text{Irr}(\CC\tilde H_k)$ such that $d_{\grx,\grf}\not=0\not=d_{\grc,\grf}$).
If an irreducible $\CC(\mathbf{v})\tilde{H_k}$-module  is alone in its block, then we call it a \emph{module of defect 0}. Motivated by the idea of M. Chlouveraki and H. Miyachy in \cite{Maria} \textsection 3.1 we use the following criteria in order to determine whether two modules belong to the same block:

\begin{itemize}
	\item We have $\gru(s_{\grx})\not =0$ if and only if $V_{\grx}$ is a module of defect 0 (see \cite{maller}, Lemma 2.6). 
	
	This criterium  together with theorem 7.5.11 in \cite{Geck}, states that $V_{\grx}$ is a module of defect 0 if and only if the decomposition matrix is of the form
	$$\small{\begin{pmatrix} *& \dots&*&0 &*&\dots&*\\
	\vdots& \dots& \vdots&\vdots &\vdots& \dots&\vdots\\
	*& \dots&*&0 &*&\dots&*\\
	0& \dots& 0&1&0&\dots&0\\
	*&  \dots&*&0 &*&\dots&*\\
	\vdots&  \dots&\vdots &\vdots& \vdots& \dots&\vdots\\
	*& \dots&*&0 &*&\dots&*
	\end{pmatrix}}$$
	
	\item If $V_{\grx}, V_{\grc}$ are in  the same block, then $\gru(\grv_{\grx}(z_0))=\gru(\grv_{\grc}(z_0))$ (see \cite{Geck}, Lemma 7.5.10), where $\grv_{\grx}, \grv_{\grc}$ are the corresponding \emph{central characters}\footnote{If $z$  lies in the center of $\CC(\mathbf{v})\tilde{H_k}$ then Schur's lemma implies that $z$ acts as  scalars in $V_{\grx}$ and $V_{\grc}$. We denote these scalars as $\grv_{\grx}(z)$ and $\grv_{\grc}(z)$ and we call the associated $\CC(\mathbf{v})$-homomorphisms $\grv_{\grx},\grv_{\grc}: Z\big(\CC(\mathbf{v})\tilde{H_k}\big)\rightarrow \CC(\mathbf{v})$ central characters (for more details, see \cite{Geck} page 227).} and $z_0$ is the central element $(s_1s_2)^3$.
\end{itemize}

We recall that in order to describe the irreducible representations of $B_3$ of dimension at most 5, it is enough to describe the irreducible $\CC\tilde{H_k}$-modules of dimension $k$. Let $S$ be an irreducible $\CC \tilde{H_k}$-module of dimension $k$ and $s\in S$ with $s\not=0$. The morphism $f_{s}: \CC \tilde{H_k}\arw S$ defined by $h\mapsto hs$ is surjective since $S$ is irreducible. Hence, by the definition of the Grothendieck group we have that $d_{\gru}\big([\CC(\mathbf{v})\tilde{H_k}]\big)=[\CC \tilde{H_k}]=[$kerf$_{s}]+[S]$. However, since $\CC(\mathbf{v})\tilde{H_k}$ is semisimple we have  $\CC(\mathbf{v})H_k=M_1\oplus...\oplus M_r$, where the $M_i$ are (up to isomorphism) all the simple $\CC(\mathbf{v})\tilde{H_k}$-modules (with redundancies). Therefore, we have $\sum\limits_{i=1}^{r}d_{\gru}([M_i])=[$ker$f_{s}]+[S].$ 
Hence, there is a simple  $\CC(\mathbf{v})\tilde{H_k}$-module $M$ such that 
\begin{equation}d_{\gru}([M])=[S]+[J],\label{eqqq}\end{equation} where $J$ is a $\CC \tilde{H_k}$-module.
\begin{rem}
	\mbox{}  
	\vspace*{-\parsep}
	\vspace*{-\baselineskip}\\ 
	\begin{itemize}
		\item[(i)] The $\CC(\mathbf{v})\tilde{H_k}$-module $M$ is of dimension at least $k$.
		\item[(ii)] If $J$ is of dimension 1, there is a $\CC(\mathbf{v})\tilde{H_k}$-module $N$ of dimension 1, such that $d_{\gru}([N])=[J]$. This result comes from the fact that the 1-dimensional $\CC \tilde H_k$-modules are of the form $(\grl_i)$, $i=1,\dots,k$ and, by definition, $\grl_i=\gru(u_i)$.
	\end{itemize}	
	\label{brrrr}
\end{rem}
The irreducible  $\CC(\mathbf{v})\tilde{H_k}$-modules are known (see \cite{Mallem} or \cite{BM} \textsection 5B and \textsection 5D, for $n=3$ and $n=4$, respectively). Therefore, we
can determine $S$ by using (\ref{eqqq}) and a case-by-case analysis. 
\begin{itemize}[leftmargin=*]
	
	\item \underline{$k=2$} : Since $\tilde{H_2}$ is the generic Hecke algebra of $\mathfrak{S}_3$, which is a Coxeter group, the irreducible representations of $\CC \tilde{H_2}$ are well-known; we have two irreducible representations of dimension 1 and one of dimension 2. By $(\ref{eqqq})$ and remark \ref{brrrr} (i),  $M$ must be the irreducible $\CC(\mathbf{v})\tilde{H_k}$-module of dimension 2 and $(\ref{eqqq})$ becomes $[S]=d_{\gru}([M])$. Hence, we have: 
	$$A=\begin{bmatrix}
	\begin{array}{rr}
	\grl_1&\grl_1\\
	0&\grl_2
	\end{array}
	\end{bmatrix},\;
	B=\begin{bmatrix}
	\begin{array}{rr}
	\grl_2&0\\
	-\grl_2&\grl_1\\
	\end{array}
	\end{bmatrix}$$
	Moreover, $[S]=d_{\gru}([M])$ is irreducible and $M$ is the only irreducible $\CC(\mathbf{v})\tilde{H_k}$-module of dimension 2. As a result, $M$ has to be alone in its block i.e. $\gru(s_{\grx})\not=0$, where $\grx$ is the character that corresponds to $M$. Therefore, an irreducible representation of $B_3$ of dimension 2 can be described by the explicit matrices $A$ and $B$ we have above, depending only on a choice of $\grl_1, \grl_2$ such that $\gru(s_{\grx})=\grl_1^2-\grl_1\grl_2+\grl_2^2\not=0$.
	
	\item \underline{$k=3$} : Since the algebra $\CC(\mathbf{v})\tilde{H_3}$ is split, we have a bijection between the set 
	Irr$(\CC(\mathbf{v})\tilde{H_3})$ and the set Irr$(W)$, as we explained earlier.
	We refer to J. Michel's version of CHEVIE package of GAP3 (see \cite{J}) in order to find the irreducible characters of $W_3$. 
	We type:
	\begin{verbatim}
	gap> W_3:=ComplexReflectionGroup(4);
	gap> CharNames(W_3);
	[ "phi{1,0}", "phi{1,4}", "phi{1,8}", "phi{2,5}", "phi{2,3}", "phi{2,1}",
	"phi{3,2}" ]
	\end{verbatim}
	We have 7 irreducible characters $\grf_{i,j}$, where $i$ is the dimension of the representation and $j$ the valuation of its fake degree (see \cite{Malle} \textsection 6A). Since $S$ is of dimension 3,  the equation $(\ref{eqqq})$ becomes $[S]=d_{\gru}([M])$, where $M$ is the irreducible $\CC(\mathbf{v})\tilde{H_3}$-module  that corresponds to the character $\grf_{3,2}$ (see remark \ref{brrrr}(i)). However, we have explicit matrix models for this representation (see \cite{BM}, \textsection 5B or we can refer to CHEVIE package of GAP3 again) and since $[S]=d_{\gru}([M])$ we have:
	$$A=\begin{bmatrix}
	\grl_3&0&0\\
	\grl_1\grl_3+\grl_2^2& \grl_2& 0\\
	\grl_2& 1&\grl_1
	\end{bmatrix},\;
	B=\begin{bmatrix}
	\grl_1& -1&\grl_2\\
	0&\grl_2&-\grl_1\grl_3-\grl_2^2\\
	0&0&\grl_3
	\end{bmatrix}.$$
	$M$ is the only  irreducible $\CC(\mathbf{v})\tilde{H_3}$-module of dimension 3, therefore, as in the case where $k=2$, we must have that $\gru(s_{\grf_{3,2}})\not=0$. The Schur element $s_{\grf_{3,2}}$ has been determined in \cite{MAA} and the condition $\gru(s_{\grf_{3,2}})\not=0$ becomes \begin{equation}\frac{(\grl_1^2+\grl_2\grl_3)(\grl_2^2+\grl_1\grl_3)(\grl_3^2+\grl_1\grl_2)}{(\grl_1\grl_2\grl_3)^2}\not=0.\label{tt1}\end{equation}
	To sum up, an irreducible representation of $B_3$ of dimension 3 can be described by the explicit matrices $A$ and $B$ we gave above, depending only on a choice of $\grl_1, \grl_2, \grl_3$ such that (\ref{tt1}) is satisfied. 
	
	\item \underline{$k=4$} : We use again the program GAP3 package CHEVIE in order to find the irreducible characters  of $W_4$. In this case we have 16 irreducible characters among which 2 of dimension 4; the characters $\grf_{4,5}$ and $\grf_{4,3}$ (we follow again the notations in GAP3, as in the case where $k=3$). 
	
	By remark \ref{brrrr}(i) and relation $(\ref{eqqq})$, we have $[S]=d_{\gru}([M])$, where $M$ is the irreducible $\CC(\mathbf{v})\tilde{H_4}$-module  that corresponds either to the character $\grf_{4,5}$ or to the character $\grf_{4,3}$. We have again explicit matrix models for these representations (see \cite{BM}, \textsection 5D, where we multiply the matrices described there by a scalar $t$ and we set $u_1=t, u_2=tu, u_3=tv$ and $u_4=tw$):
	$$A=\begin{bmatrix}
	\grl_1&0&0&0\\ \\
	\frac{\grl_1^2}{\grl_2}&\grl_2& 0& 0\\\\
	\frac{\grl_1^3}{r}&\frac{\grl_1\grl_2\grl_3-\grl_1r}{r}& \grl_3& 0\\\\
	-\grl_2&\grl_2\gra&\frac{r\gra}{\grl_1^2}&\grl_4
	\end{bmatrix},\;
	B=\begin{bmatrix}
	\grl_4&\grl_3\gra&\frac{\grl_2\grl_3\gra}{\grl_1}&-\frac{\grl_2\grl_3^2}{r}\\\\
	0&\grl_3&\frac{\grl_2\grl_3-r}{\grl_1}&\frac{\grl_1^2\grl_3}{r}\\\\
	0&0&\grl_2&\frac{\grl_1^3}{r}\\\\
	0&0&0&\grl_1
	\end{bmatrix},$$
	where $r:=\pm\sqrt{\grl_1\grl_2\grl_3\grl_4}$ and $\gra:=\frac{r-\grl_2\grl_3-\grl_1\grl_4}{\grl_1^2}$.
	
	Since $d_{\gru}([M])$ is irreducible either $M$ is of defect 0 or it is in the same block as the other irreducible module of dimension 4  i.e. $\gru(\grv_{\grf_{4,5}}(z_0))=\gru(\grv_{\grf_{4,3}}(z_0))$. We use the program GAP3 package CHEVIE in order to calculate  these central characters. 
	
	More precisely, we have 16 representations where the last 2 are of dimension 4. These representations will be noted in GAP3 as $\verb+R[15]+$ and $\verb+R[16]+$. Since $z_0=(s_1s_2)^3$  we need to calculate the matrices $R[i](s_1s_2s_1s_2s_1s_2), i=15, 16$. These are the matrices $\verb+Product(R[15]{[1,2,1,2,1,2]})+$ and $\verb+Product(R[16]{[1,2,1,2,1,2]})+$,
	 in GAP3 notation, as we can see below:
	\begin{verbatim}
	gap> R:=Representations(H_4);;
	gap> Product(R[15]{[1,2,1,2,1,2]});
	[ [ u_1^3/2u_2^3/2u_3^3/2u_4^3/2, 0, 0, 0 ], 
	[ 0, u_1^3/2u_2^3/2u_3^3/2u_4^3/2, 0, 0 ],
	[ 0, 0, u_1^3/2u_2^3/2u_3^3/2u_4^3/2, 0 ],
	[ 0, 0, 0, u_1^3/2u_2^3/2u_3^3/2u_4^3/2] ]
	gap> Product(R[16]{[1,2,1,2,1,2]});
	[ [ -u_1^3/2u_2^3/2u_3^3/2u_4^3/2, 0, 0, 0 ], 
	[ 0, -u_1^3/2u_2^3/2u_3^3/2u_4^3/2, 0, 0 ],
	[ 0, 0, -u_1^3/2u_2^3/2u_3^3/2u_4^3/2, 0 ],
	[ 0, 0, 0, -u_1^3/2u_2^3/2u_3^3/2u_4^3/2 ] ]
	\end{verbatim}
	We have $\gru(\grv_{4,5}(z_0))=-\gru(\grv_{4,3}(z_0))$, which means that 
	$M$ is of defect zero i.e. $\gru(s_{\grf_{4,i}})\not=0$, where $i=3$ or 5.  The Schur elements $s_{\grf_{4,i}}$ have been determined in \cite{MAA} \textsection 5.10, hence the condition $\gru(s_{\grf_{4,i}})\not =0$ becomes:
	\begin{equation}\frac{-2r\prod\limits_{p=1}^4(r+\grl_p^2)\prod\limits_{r,l}(r+\grl_r\grl_l+\grl_s\grl_t)}{(\grl_1\grl_2\grl_3\grl_4)^4}\not=0,
	\text {where } \{r,l,s,t\}=\{1,2,3,4\}
	\label{tt2}
	\end{equation}
	Therefore, an irreducible representation of $B_3$ of dimension 4 can be described by the explicit matrices $A$ and $B$ depending only on a choice of $\grl_1, \grl_2, \grl_3, \grl_4$ and a square root of $\grl_1\grl_2\grl_3\grl_4$ such that (\ref{tt2}) is satisfied. 
	\item \underline{$k=5$} : In this case, compared to the previous ones, we have two possibilities for $S$. The reason is that we have characters of dimension 5 and  dimension 6, as well. Therefore, by remark \ref{brrrr}(i) and (ii) and (\ref{eqqq})  we either have $d_{\gru}([M])=[S]$, where $M$ is one  irreducible $\CC(\mathbf{v})\tilde{H_5}$-module of dimension 5 or  $d_{\gru}([N])=[S]+d_{\gru}([N'])$, where $N, N'$ are some irreducible $\CC(\mathbf{v})\tilde{H_5}$-modules of dimension 6 and 1, respectively.
	
	In order to exclude the latter case, it is enough to show that $N$ and $N'$ are not in the same block. Therefore, at this point, we may assume that $\gru(\grv_{\grx}(z_0))\not=\gru(\grv_{\grc}(z_0))$, for every irreducible character $\grx, \grc$ of $W_5$ of dimension 6 and 1, respectively. We use GAP3 in order to calculate the central characters, as we did in the case where $k=4$ and we have:
	$\gru(\grv_{\grc}(z_0))=\grl_i^6$, $i\in \{1,...,5\}$ and  $\gru(\grv_{\grx}(z_0))=-x^2yztw$, where $\{x,y,z,t,w\}=\{\grl_1, \grl_2, \grl_3, \grl_4, \grl_5\}$. We notice that $\gru(\grv_{\grx}(z_0))=-\grl_j$det$A$, $j\in \{1,...,5\}$.  Therefore, the assumption  $\gru(\grv_{\grx}(z_0))\not=\gru(\grv_{\grc}(z_0))$ becomes det$A\not=-\grl_i^6\grl_j^{-1}$, $i,j \in\{1,2,3,4,5\}$, where $i, j$ are not necessarily distinct.
	
	By this assumption we have that $d_{\gru}([M])=[S]$, where $M$ is some irreducible $\CC(\mathbf{v})\tilde{H_5}$-module of dimension 5. We have again explicit matrix models for these representation (see \cite{Mallem} or the CHEVIE package of GAP3), therefore we can determine the matrices $A$ and $B$.
	We notice that these matrices depend only on the choice of eigenvalues and of a fifth root of det$A$.
	
	Since $d_{\gru}([M])$ is irreducible either $M$ is of defect 0 or it is in the same block with another irreducible module of dimension 5. However, since the central characters of the irreducible modules of dimension 5 are distinct fifth roots of  $(u_1u_2u_3u_4u_5)^{6}$, we can exclude the latter case.
	Hence, $M$ is of defect zero i.e. $\gru(s_{\grf})\not=0$, where $\grf$ is the character that corresponds to $M$. The Schur elements  have been determined in \cite{MAA} (see also Appendix A.3 in \cite{Ma}) and one can also find them in CHEVIE package of GAP3; they are 
	$$\frac{5\prod\limits_{i=1}^5(r+u_i)(r-\grz_3u_i)(r-\grz_3^2u_i)\prod\limits_{i\not=j}(r^2+u_iu_j)}{(u_1u_2u_3u_4u_5)^7},$$
	where $r$ is a 5th root of $u_1u_2u_3u_4u_5$. However, due to the assumption det$A\not=-\grl_i^5$, $i \in\{1,2,3,4,5\}$ (case where $i=j$), we have that $\gru(r)+\grl_i \not =0$. Therefore, the condition $\gru(s_{\grf})\not=0$ becomes
	\begin{equation}\prod\limits_{i=1}^5(\tilde{r}^2+\grl_i\tilde{r}+\grl_i^2)\prod\limits_{i\not=j}(\tilde{r}^2+\grl_i\grl_j)\not=0,
	\label{oua}
	\end{equation}
	where $\tilde{r}$ is a fifth root of det$A$. 
	
	To sum up, an irreducible representation of $B_3$ of dimension 5 can be described by the explicit matrices $A$ and $B$, that one can find for example in CHEVIE package of GAP3, depending only on a choice of $\grl_1, \grl_2, \grl_3, \grl_4, \grl_5$ and a fifth root of det$A$ such that (\ref{oua}) is satisfied.
\end{itemize}
\begin{rem}
	\begin{enumerate}
		\mbox{}  
		\vspace*{-\parsep}
		\vspace*{-\baselineskip}\\
		\item We can generalize our results for a representation of $B_3$ over a field of positive characteristic, using similar arguments. However, the cases where $k=4$ and $k=5$ need some extra analysis; For $k=4$ we have two irreducible $\CC(\mathbf{v})\tilde{H_4}$-modules of dimension 4, which are not in the same block if we are in any characteristic but 2. However, when we are in characteristic 2, these two modules coincide and, therefore, we obtain an irreducible module of $B_3$ which is of defect 0, hence we arrive to the same result as in characteristic 0. We have exactly the same argument for the case where $k=5$ and we are over a field of characteristic 5.

		\item The irreducible representations of $B_3$ of dimension at most 5 have been classified in \cite{T}. Using a new framework, we arrived to the same results. The matrices $A$ and $B$ described by Tuba and Wenzl are the same (up to equivalence) with the matrices we provide in this paper. For example, in the case where $k=3$, we have given explicit matrices $A$ and $B$. If we take the matrices  $DAD^{-1}$ and $DBD^{-1}$, where $D$ is the invertible matrix $$D=\begin{bmatrix}
		-\grl_1\grl_2-\grl_3^2&\grl_1(\grl_3-\grl_1)&(\grl_2-\grl_3)(\grl_3-\grl_1)\\
		(\grl_2-\grl_1)(\grl_3^2+\grl_1\grl_2)& \grl_1(2\grl_2\grl_1-\grl_1^2+2\grl_1\grl_3-\grl_3\grl_2)& (\grl_1-\grl_3)(\grl_2^2+\grl_1\grl_3)\\
		0& \grl_1(\grl_1-\grl_3)&-\grl_3\grl_1(\grl_1+\grl_2)
		\end{bmatrix},$$ we just obtain the matrices determined in \cite{T} (The matrix $D$ is invertible since det$D=\grl_1(\grl_1^2+\grl_2\grl_3)(\grl_3^2+\grl_1\grl_2)^2\not=0$, due to (\ref{eqqq})).
	\end{enumerate}
\end{rem}

\end{document}